\theoremstyle{plain}
\newtheorem{thm}{Theorem}[section]
\newtheorem{lem}[thm]{Lemma}
\theoremstyle{definition}
\newtheorem{definition}[thm]{Definition}
\newtheorem{remark}[thm]{Remark}
\newcommand{\R}{\mathbb{R}} 
\newcommand{\Lnorm}[1]{\| #1 \|_{L^2}^2}
\newcommand{\Cp}{\ubar C_{P''}}
\newcommand{\vp}{ \nu_{\partial}}
\def\dx{\partial_x}
\def\dt{\partial_t}
\def\H{\mathcal{H}}
\def\G{\mathcal{G}}
\def\F{\mathcal{F}}
\def\E{\mathcal{E}}
\def\V{\mathcal{V}}
\def\e{\text{e}}
\newcommand{\ubar}[1]{\underaccent{\bar}{#1}}
\newcommand{\xip}[1]{(#1, \xi_+^{\mathcal{S}^i}(#1,x,t))}
\newcommand{\xipe}[1]{(#1, \xi_+^{\mathcal{S}^i_e}(#1,x,t))}
\newcommand{\xipte}[1]{(#1, \xi_+^{\mathcal{\tilde S}^i_e}(#1,x,t))}
\newcommand{\rhor}{\rho_{\text{ref}}}
\newcommand{\xipoe}{\xi_+^{\mathcal{S}^i_e}}
\newcommand{\xiptoe}{\xi_+^{\mathcal{\tilde S}^i_e}}
\newcommand{\tx}[1]{t_+^{\mathcal{S}^i_e}(#1,t)}
\title[Observer-based data assimilation for barotropic gas transport]{Observer-based data assimilation for barotropic gas transport using distributed measurements}
\author[J. Giesselmann, M. Gugat, T. Kunkel]{Jan Giesselmann$^1$, Martin Gugat$^2$, Teresa Kunkel$^{1,\star}$}
\address{$^1$Technische Universität Darmstadt\\ $^2$Friedrich-Alexander-Universität Erlangen-Nürnberg}
\date{April 9, 2024}  
\email{giesselmann@mathematik.tu-darmstadt.de}
\email{martin.gugat@fau.de}
\email{tkunkel@mathematik.tu-darmstadt.de}
\thanks{$^\star$ Corresponding author}
\begin{document}
\maketitle

\begin{abstract}
	We consider a state estimation problem for gas pipeline flow modeled by the one-dimensional barotropic Euler equations. In order to reconstruct the system state,
	we construct an observer system of Luenberger type based on distributed measurements of one state variable. 
	First, we show the existence of Lipschitz-continuous semi-global solutions of the observer system and of the original system for initial and boundary data satisfying smallness and compatibility conditions for a single pipe and for general networks. 
	Second, based on an extension of the relative energy method we prove that the state of the observer system converges exponentially in the long time limit towards the original system state.
	We show this for a single pipe and for star-shaped networks.
\end{abstract}

\begin{quote}	
	\noindent 
	{\small {\bf Keywords:} 
		data assimilation, observer systems, barotropic flow, Euler equations, relative energy estimates, networks, well-posedness}
\end{quote}

\begin{quote}
	\noindent
	{\small {\bf 2020 Mathematics Subject Classification:}
		35L65, 35L04, 93B52, 93B53, 93C20, 93C41}
\end{quote}

\section{Introduction}
The goal of data assimilation is to estimate the current state of some system
by combining available measurement data with a physical model of the system \cite{LawStuartZygalakis_Dataassimilation2015, AschBocquetNodet_DataAssimilation_2016}.
This is important, for example, if control problems are considered that need the current system state as input data.
Here, we will consider systems that are described by nonlinear hyperbolic balance laws.
In many applications such systems are considered on networks, for a survey on balance laws on networks see \cite{BressanHerty2014}.
Examples are the Saint-Venant system modeling water flow through open channels,
the Aw-Rascle-Zhang model for traffic flow
and
the barotropic Euler equations modeling gas transport through pipes,
where our long-term goal is to study data assimilation for these problems on networks.

There are different approaches for data assimilation, which can mainly be grouped into variational approaches, statistical approaches such as Kalman filtering and observer-based approaches.
In this paper we will use the observer-based approach,
which means that an observer system is constructed that contains, in comparison to the original system, some 
additional source terms or boundary conditions accounting for the measurement data.
Then, the main goal is to show the synchronization, i.e., the convergence of the observer system towards the original system state.
Proving synchronization of observer systems can be viewed as a control problem, where the difference system has to be controlled to zero.
A particular difficulty arises from the nonlinearity of the balance laws,
since for nonlinear systems the difference system does not have the same structure as the original system.
To deal with this challenge we construct Lyapunov functions based on relative energy.

Similar to other control problems, we can distinguish between boundary observers based on boundary measurements (cf. \cite{Gugat2021} for an example for a semilinear model for gas transport),  and distributed observers, where distributed measurements are inserted into the observer system through suitable source terms.
Synchronization of a boundary observer for linear hyperbolic systems is shown in \cite{LiLu2022}.
For an overview of boundary stabilization of one-dimensional hyperbolic systems see \cite{Coron2007, Hayat2021} and references therein.
An important class of observers for distributed measurements are Luenberger-type observers \cite{Luenberger1971}, where the inserted source terms are proportional to the difference between the state of the observer system and the measurements, see e.g. \cite{BoulangerMoireau2015} for an application to the Saint-Venant system using a Luenberger-type observer for a kinetic representation.
Other works using Luenberger-type observers include 
\cite{FarhatJohnstonJollyTiti2018}, where synchronization of an observer for the 2D Bénard convection problem is investigated numerically for coarse-grained measurements of one variable,
\cite{JollyTiti2019} and \cite{ChapelleCindea2012}, where exponential synchronization of an observer for the surface quasi-geostrophic
equation and the wave equation, respectively, is shown, 
and \cite{Titi2020}, where error estimates for a mixed finite element discretization of an observer system for the Navier-Stokes equation are provided.
Luenberger observers for linear hyperbolic systems of second order are investigated in \cite{CindeaImperialeMoireau2015} and \cite{ChapelleCindeaMoireau2012}, where in the latter work measurement data are incorporated into the discretization of wave-like equations in order to guarantee bounded approximation errors.
Joint state-parameter estimation for a front-tracking problem  using a Luenberger observer for state estimation and a Kalman filter for parameter estimation is studied in \cite{RochouxCollin2018}
and in \cite{BENABDELHADI_Krstic_2021} simultaneous state and parameter estimation is investigated for wave equations and exponential synchronization is shown.
Luenberger observers can also be designed for boundary measurements, see e.g. \cite{Aamo06}, where a boundary Luenberger observer is introduced for a linearized model of pipeline flow.

In the following, we will consider observer-based data assimilation for systems modeled by the one-dimensional barotropic Euler equations
\begin{align}
	\dt \rho + \dx m &= 0 \label{eq:Euler1}\\
	\dt m + \dx\left(\frac{m^2}{\rho} + p(\rho) \right) &= -\gamma \frac{|m| m}{\rho}\label{eq:Euler2}
\end{align}
for $ 0<x<\ell,\ t>0$ 
allowing for a nonlinear friction term on the right-hand side.
Here, the density $\rho$ and the mass flow rate $m=\rho v$ with the velocity $v$ are the unknowns and the system is complemented by a strictly monotone pressure law  $p(\rho)$ 
and a friction coefficient $\gamma\ge 0$.
This system describes gas transport through gas pipes, where the friction term results from the friction at the pipe walls (cf. \cite{BrouwerGasserHerty2011}). 
Let us note that for a pressure law given by $p(\rho)=\tfrac{g}{2} \rho^2$ with $g$ the gravity of earth the system (without friction) is equivalent to the Saint-Venant system  for flat bathymetry  and the system is also similar to the  Aw-Rascle-Zhang model (see e.g.  \cite{YuBayenKrstic2019}) for traffic flow.

In order to investigate the convergence of the state of the observer system towards the original solution, we have to bound the difference between two solutions. There are different frameworks for this in the case of systems of nonlinear hyperbolic balance laws: The $L^1$-stability framework by Bressan \cite{bressan2005hyperbolic}, which needs a small total variation condition, and the relative energy framework, which we will use in the following.
The relative energy defined with respect to conservative variables was introduced in \cite{Dafermos1979} 
and was applied to a variety of thermo-mechanical theories of fluids in \cite{GiesselmannLattanzioTzavaras2017}. If systems on networks are studied, it turns out, see \cite{EggerGiesselmann}, that it is more suitable to use the relative energy with respect to the
non-conservative variables $(\rho, v)$. Therefore, we rewrite
system \eqref{eq:Euler1}-\eqref{eq:Euler2} in the Hamiltonian form 
\begin{align}
	\dt \rho + \dx m &= 0 \label{eq:system_1}\\
	\dt v + \dx h &= -\gamma|v|v \label{eq:system_2} 
\end{align}
with the specific enthalpy $h(\rho, v):=\tfrac{1}{2} v^2 + P'(\rho)$, a smooth and strictly convex pressure potential $P=P(\rho)$, connected to the pressure law by $p'(\rho) = \rho P''(\rho)$,
and associated energy functional
\begin{align} \label{eq:H}
	\mathcal{H}(\rho,v):=\int_{0}^{\ell} \left(\tfrac{1}{2}  \rho v^2 +P(\rho)\right)\ dx.
\end{align}
In the case of subsonic velocities and smooth solutions, which is the relevant case for gas transport, this formulation allows to use relative energy techniques in order to investigate
the stability of the equations \eqref{eq:system_1}-\eqref{eq:system_2} with respect to parameters and initial data, see \cite{EggerGiesselmann}.
The essential idea of our approach is to
use a modification of this relative energy method to
prove the exponential synchronization of the observer to the original system, i.e.,
we show the exponential convergence in the long time limit of the state of the observer system to the original system state. 
In this paper we show the exponential convergence of an exact solution of the observer system towards the original system state, but it is also interesting to investigate the convergence of a discretized observer system towards the exact solution.
We expect that for suitable discretization schemes, e.g. for a mixed finite element method as in \cite{Egger2023_RelativeEnergysiDiscrete}, similar convergence results can be shown.

We study the case where distributed partial measurements are available, i.e.,  measurements of only one state variable on the whole computational domain. Our aim is to estimate the complete system state. 
A motivation for this scenario is that e.g. for the Saint-Venant system the water height in a network of open channels is accessible more easily than the velocity. For the barotropic Euler system, we assume 
that we have distributed measurements of one of the fields velocity, density or mass flow.
Then, we introduce an observer system for the original system \eqref{eq:system_1}-\eqref{eq:system_2}, which reads for the case of velocity measurements
\begin{align}
	\dt \hat\rho + \dx\hat m &= 0 
	\label{eq:observer_1v}\\
	\dt \hat v + \dx \hat h &= -\gamma|\hat v| \hat v +\mathcal{L}_v\label{eq:observer_2v} 
\end{align}
with source term $\mathcal{L}_v = \mu (v-\hat v)$ of Luenberger type  with `nudging parameter' $\mu>0$.
We complement both systems by identical Dirichlet boundary conditions at the left and right end of the
pipe. 
When we consider the equations on networks, we additionally need coupling conditions at inner nodes. Here, we will use the conservation of mass and the continuity of the specific enthalpy, which yields energy conservation as shown in \cite{Reigstad2015}.

The observer system \eqref{eq:observer_1v}-\eqref{eq:observer_2v} may seem similar to relaxation systems, see e.g. \cite{GoatinLaurent-Brouty2019,GugatHertyYu2018,JinXin1995}.
However, note that the convergence of relaxation systems is different, since
the relaxation system has more equations than the original system and convergence is considered in the limit where the relaxation parameter converges to zero.
In our case, the observer system has the same size and structure as the  original system and the nudging parameter $\mu$ is fixed, while convergence is shown in the limit $t\rightarrow\infty$.

The two main contributions of this paper are to prove exponential decay of the difference between the state of the observer system and the original system state for partial measurements and to
extend the existence result from \cite{GugatUlbrich2018} by showing existence of semi-global Lipschitz-continuous of the observer system as well as of the original system for the case of more general coupling and boundary conditions than in \cite{GugatUlbrich2018}. 
Using only measurements of one of the state variables the other state variable can be reconstructed exponentially fast.
Our synchronization result is valid for single pipes as well as for star-shaped networks, while the existence result is valid for general networks.
For the synchronization, 
it turns out that increasing the nudging parameter does not necessarily improve the speed of synchronization.
To the best of our knowledge, this is the first time that the synchronization of an observer system is proven for $2\times2$--systems of  quasilinear  hyperbolic balance laws.  

This paper is structured as follows. In section \ref{sec:GeneralSetup} we present the general setup, relevant notations and the Luenberger terms for the three cases of measurement of velocity, density or mass flow.
In section \ref{sec:existence} we show the existence of semi-global Lipschitz-continuous solutions of the observer system \eqref{eq:observer_1v}-\eqref{eq:observer_2v} for measurement of velocity or density provided that
the original system \eqref{eq:system_1}-\eqref{eq:system_2} admits a unique solution
and the solution of the original system as well as
the initial and boundary data are sufficiently small when expressed in Riemann invariants
and have sufficiently small Lipschitz-constants.
Let us note that our existence theorem also yields existence of a solution of the original system, if the initial and boundary data satisfy suitable smallness and regularity assumptions,
i.e., the existence theorem can first be used to show existence of a solution of the original system,
and then in a second step the theorem can be applied to show existence of a solution of the observer system, which depends on this original solution.
For the original system \eqref{eq:system_1}-\eqref{eq:system_2}, existence of semi-global solutions was shown in \cite{GugatUlbrich2018} for coupling conditions requiring conservation of mass and continuity of the pressure. This was done by writing the system in terms of Riemann invariants, which yields 
an integral equation along the characteristic curves.
The proof cannot be transferred directly to the observer system due to the form of the observer-terms, but 
by using a modified fixed-point iteration, we can show the existence of solutions of the observer system on a single pipe and on general networks with coupling conditions given by the conservation of mass and the continuity of the specific enthalpy.
In addition, our proof yields existence of solutions of the original system for these coupling conditions and for more general boundary conditions,
i.e., for coupling conditions requiring continuity of the specific enthalpy instead of continuity of the pressure as it was used in \cite{GugatUlbrich2018} and for boundary conditions for the mass flow or specific enthalpy instead of boundary conditions in terms of Riemann invariants.

In section \ref{sec:exp_synchronization} we show exponential convergence of solutions of the observer system towards the original system state
under the assumption that both systems admit solutions that satisfy suitable regularity assumptions, 
are bounded away from vacuum, have sufficiently small velocity and the solution of the original system has small time derivatives.
We can show this result for a single pipe and for star-shaped networks for all three cases, i.e., measurement of velocity, density or mass flow.
In the convergence proof we use techniques that are similar to those 
used in \cite{EggerGiesselmann} to show stability of the barotropic Euler equations, i.e., we will measure the distance between the solution of the original system \eqref{eq:system_1}-\eqref{eq:system_2} and the solution of the observer system \eqref{eq:observer_1v}-\eqref{eq:observer_2v} in terms of the relative energy and then estimate the time derivative of the relative energy.
This leads to decrease with respect to the variable that is measured, but not with respect to the other state variable. Therefore one of the main ideas of the proof is to use an extension of the relative energy framework. More precisely, 
we introduce an additional functional whose time derivative yields decrease with respect to the variable that is not measured. This modification of the relative energy is inspired by the extension of the energy that was used in \cite{EggerKugler2018} to study convergence of wave equations to steady states,
which in turn was based on \cite{Zuazua1988}, among others.

\section{General Setup and Luenberger Observer} \label{sec:GeneralSetup}
In the following we assume that the pressure law $p:\R_+\to\R$ is smooth and strictly monotone so that for given density bounds $0<\ubar\rho <\bar \rho$ there exist constants $\ubar C_{p'}, \bar C_{p'}>0$ such that
\begin{align*}
0<\ubar C_{p'}\le  p'(\rho)\le \bar C_{p'}   \qquad \forall \, 0<\ubar \rho\le \rho \le \bar \rho.
\end{align*}
Since the pressure potential $P:\R_+\to\R$ is connected to the pressure law by $p'(\rho) = \rho P''(\rho)$, this implies that the pressure potential is smooth and strongly convex with 
\begin{align*}
\  0<\ubar{C}_{P''}\le P''(\rho) \le \bar{C}_{P''},
\qquad |P'''(\rho)|\le C_{P'''}  \qquad \forall \, 0<\ubar \rho\le \rho \le \bar \rho
\end{align*}
for some constants $\ubar{C}_{P''}, \bar{C}_{P''}, C_{P'''}>0$.

We consider three different cases of available measurements, i.e., we assume that one of the three fields $v$, $\rho$ or  $m$ is measured in the whole interval $[0,\ell]$, while for the other state variable we have no information. Then, we consider the observer system
\begin{align}
	\dt \hat\rho + \dx\hat m &= \mathcal{L}_\rho \label{eq:observer_1}\\
	\dt \hat v + \dx \hat h &= -\gamma|\hat v| \hat v +\mathcal{L}_v\label{eq:observer_2} 
\end{align}
for $ 0<x<\ell,\ t>0$
with Luenberger observer terms $\mathcal{L}_\rho$, $\mathcal{L}_v$ depending on the measurements.
For velocity measurements we set
\begin{align} \label{eq:observer-term_v}
	\mathcal{L}_\rho=0, \qquad \mathcal{L}_v=\mu  (v-\hat v), \qquad \mu>0
\end{align}
and for density measurements 
\begin{align} \label{eq:observer-term_rho}
	\mathcal{L}_\rho=\mu\frac{c}{\sqrt{p'(\hat{\rho})}}\hat\rho (\tilde P(\rho)-\tilde P(\hat\rho)), \qquad \mathcal{L}_v=0, \qquad \mu>0
\end{align}
with  $c:= \sqrt{p'(\rho_{\text{ref}})}$ for a reference density $\rho_{\text{ref}}>0$ and $\tilde P(\rho):= \int_{\rho_{\text{ref}}}^{\rho}\tfrac{\sqrt{p'(s)}}{c s}ds$.
For measurements of the mass flow we will use
\begin{align}  \label{eq:observer-term_m}
	\mathcal{L}_\rho=0, \qquad \mathcal{L}_v=\mu (m-\hat m), \qquad \mu>0.
\end{align}
Note that the observer term for density measurements is not exactly in Luenberger form, but
for $0 < \underline{\rho}\le \rho , \hat{\rho}\le \bar \rho$  we have 
$\mathcal{L}_\rho=\mu\tfrac{\hat\rho}{\tilde \rho}\tfrac{\sqrt{p'(\tilde \rho)}}{\sqrt{p'(\hat{\rho})}} (\rho - \hat \rho)$
with some $\tilde \rho \in[ \underline{\rho},\bar \rho ]$
and due to the monotonicity of $p$ we have
\begin{align*} 
	0<\tfrac{\ubar\rho}{\bar \rho}\tfrac{\sqrt{\ubar C_{p'}}}{\sqrt{\bar C_{p'}}}
	\le\tfrac{\hat\rho}{\tilde \rho}\tfrac{\sqrt{p'(\tilde \rho)}}{\sqrt{p'(\hat{\rho})}}
	\le \tfrac{\bar\rho}{\ubar \rho}\tfrac{\sqrt{\bar C_{p'}}}{\sqrt{\ubar C_{p'}}}.
\end{align*}
The reason why we choose this form of the observer term is that this 
observer term is linear when expressed in Riemann invariants, which will be relevant for the existence proof.

\begin{remark}
	If exact measurements of $\rho$ are available one might compute $\partial_t \rho= -\partial_x m$ and, thus, reconstruct the whole system state without recourse to an observer. However, in practical applications, measurements are subject to measurement errors and while these are usually small for the measured field they will be significant for its derivatives. Thus, we provide a framework that uses measurement data of one of the fields $v$, $\rho$ or $m$, but does not make use of derivatives of measured data.
\end{remark}

Since we will also consider the observer system on networks, we have to introduce some notation.
We describe a network by a directed, connected graph with edges $e \in\mathcal{E}$ and vertex set $\V$ consisting of boundary nodes $\nu\in \V_\partial$ 
 and inner nodes $\nu\in \V\setminus \V_{\partial}$.
The edge $e \in\mathcal{E}$ is identified  with an interval $(0,\ell^e)$ and the set of edges that are incident to some node $\nu$ is denoted by $\E(\nu)$.
We assume that all pipes have the same diameter.
For an edge $e=(\nu_1, \nu_2)$ that starts in node $\nu_1$ and ends at node $\nu_2$ we denote the pipe direction by $s^e(\nu_1)=-1$ and $s^e(\nu_2)=1$.
In the following, we will abbreviate
\begin{align*}
	\|  u- v\|_{L^2(\E)}^2:= \sum_{e\in\E} \| u^e-v^e\|_{L^2(0, \ell^e)}^2.
\end{align*}
Then, we assume that on each edge $e\in\E$ the equations \eqref{eq:system_1}-\eqref{eq:system_2} and
\eqref{eq:observer_1}-\eqref{eq:observer_2}, respectively, are satisfied.
At the inner nodes we have to prescribe coupling conditions, where we will use the conservation of mass and the continuity of the specific enthalpy, i.e.,
\begin{align}\label{eq:couplingc}
	\sum_{e\in\E(\nu)} m^e(\nu) s^e(\nu) = 0, \qquad 
	h^e(\nu)=h^f(\nu) \quad \forall e,f\in\E(\nu) 
\end{align}
for every inner node $\nu\in\V\setminus\V_\partial$. These coupling conditions yield energy conservation, see \cite{Reigstad2015}.

\section{Existence of solutions} \label{sec:existence}
A prerequisite of our proof of convergence of solutions of the observer system towards the original solution in section \ref{sec:exp_synchronization} is the existence of Lipschitz-continuous solutions of the observer system that have small, subsonic velocities and densities that are bounded away from vacuum. Therefore, in this section we will show existence of solutions of the observer system with suitable bounds on velocity and density for observer terms given by \eqref{eq:observer-term_v} or \eqref{eq:observer-term_rho}, i.e., for measurement of velocity or density.

For the original system \eqref{eq:system_1}-\eqref{eq:system_2} existence of solutions was shown in \cite{GugatUlbrich2018} for a single pipe and for networks, where in contrast to our coupling conditions conservation of mass and continuity of the pressure is used as coupling condition. If the initial and boundary conditions are sufficiently small when written in terms of Riemann invariants and have sufficiently small Lipschitz constant, existence of semi-global solutions, i.e., existence of solutions on a given time interval, with a priori bounds on velocity and density was established (see Theorem 6.1 in \cite{GugatUlbrich2018}). The main idea in the proof is rewriting the barotropic Euler equations \eqref{eq:system_1}-\eqref{eq:system_2} in terms of Riemann invariants.
Then, by integrating along the characteristic curves, the problem is formulated as an integral equation, for which existence of solutions is shown by a fixed-point argument.
These techniques have also been used to study semi-global classical solutions, see \cite{LiRao2003}.

This result is not directly transferable to the observer system \eqref{eq:observer_1}-\eqref{eq:observer_2} (since the conditions (5.7) and (6.1) of \cite{GugatUlbrich2018} are not satisfied for the right hand side of the observer term for fixed $\mu>0$), but with a similar strategy as in \cite{GugatUlbrich2018} we can show existence of semi-global solutions of the observer system. The key idea here is to modify the fixed-point map by using the specific form of the observer terms.
Note that by setting the observer terms to zero, i.e., setting $\mu=0$, our proof also yields  existence of solutions of the original system for the coupling conditions \eqref{eq:couplingc}, i.e., for conservation of mass and continuity of the specific enthalpy.

\subsection{Observer system in Riemann invariants} 

Before we formulate the fixed-point map, we first rewrite the observer system for a single pipe in terms of Riemann invariants in order to diagonalize the advective part of the system.
We can write \eqref{eq:observer_1}-\eqref{eq:observer_2} as
\begin{align} \label{eq:observer_fluxmatrix}
	\begin{pmatrix} \dt \hat\rho \\ \dt \hat v\end{pmatrix}
	+\begin{pmatrix}\hat  v& \hat \rho \\ P''(\hat \rho) & \hat v\end{pmatrix}
	\begin{pmatrix} \dx \hat \rho \\ \dx \hat v\end{pmatrix}
	=\begin{pmatrix} \mathcal{L}_\rho \\ -\gamma|\hat v| \hat v +\mathcal{L}_v	\end{pmatrix},
\end{align}
where the flux Jacobian matrix has the eigenvalues
\begin{align*}
	\hat \lambda_\pm = \hat v \pm \sqrt{p'(\hat{\rho})},
\end{align*}
where $\sqrt{p'(\hat{\rho})}$ is the speed of sound,
and left eigenvectors
\begin{align*}
	l_\pm = \left( \frac{\sqrt{p'(\hat{\rho})}}{c \hat\rho} , \pm\frac{1}{c} \right)
\end{align*}
with $c= \sqrt{p'(\rho_{\text{ref}})}$ for a reference density $\rho_{\text{ref}}>0$.
Now, we will write \eqref{eq:observer_fluxmatrix} in terms of Riemann invariants $S_\pm$, which have the property 
$(\partial_\rho S_\pm, \partial_v S_\pm)=l_\pm$ (for an introduction to Riemann invariants see \cite{dafermos2016hyperbolic}, Chapter 7).
Here, the observer system has the Riemann invariants
\begin{align}\label{eq:def_RI}
S_\pm = \int_{\rho_{\text{ref}}}^{\hat{\rho}}\tfrac{\sqrt{p'(s)}}{c s}ds\pm \tfrac{\hat v}{c}=:\tilde P(\hat{\rho})\pm \tfrac{\hat v}{c}.
\end{align}
By multiplying \eqref{eq:observer_fluxmatrix} from the left by the eigenvectors $l_\pm$, we see that the observer system can be written in terms of Riemann invariants as the diagonalized system
\begin{equation} \label{eq:obs_RI}
\begin{split}
&\begin{pmatrix}
\dt S_+ \\ \dt S_-
\end{pmatrix}
+\begin{pmatrix}
\hat\lambda_+ & 0\\ 0 &\hat\lambda_-
\end{pmatrix}
\begin{pmatrix}
\dx S_+ \\ \dx S_-
\end{pmatrix}  \\
&= (\sigma (S_+, S_-) -\tfrac{1}{c} \mathcal{L}_v (S_+, S_-)) \begin{pmatrix} -1\\ 1 \end{pmatrix}
+\tfrac{\sqrt{p'(\hat{\rho})}}{c \hat{\rho}}  \mathcal{L}_\rho(S_+, S_-) \begin{pmatrix} 1\\ 1 \end{pmatrix},
\end{split}
\end{equation}
where $\sigma(S_+, S_-) = \tfrac{\gamma}{c} |\hat v| \hat v=\gamma \tfrac{c}{4} |S_+-S_-| (S_+-S_-)$ is the friction term and $\mathcal{L}_v (S_+, S_-)$, $\mathcal{L}_\rho(S_+, S_-)$ are the Luenberger terms written in Riemann invariants.

Let us mention some properties of the Riemann invariants and of the eigenvalues of the system that are needed to get existence of characteristic curves.
First, the equation for the Riemann invariants implies
\begin{align} \label{eq:RI2rhov}
	\hat v= \tfrac{c}{2} (S_+-S_-), \quad
	\hat \rho = \tilde P^{-1}(\tfrac{1}{2}(S_++S_-)),
\end{align}
where $\tilde P^{-1}$ is the inverse of the function $\tilde P$ introduced in \eqref{eq:def_RI}, which exists since $\tilde P'>0$ due to the assumption $p'>0$.
Thus, if the Riemann invariants satisfy $|S_\pm|\le S_{\max}$ for a constant $S_{\max}>0$, then the corresponding velocity and density satisfy the bounds
\begin{align*}
	|\hat v|\le c S_{\max}=:\bar v,\quad
	\ubar\rho:=\tilde P^{-1}(-S_{\max})\le \hat\rho\le \tilde P^{-1}(S_{\max})=:\bar{\rho}.
\end{align*}
Since $\tilde P^{-1}(0)=\rhor>0$ and $\tilde P^{-1}$ is continuous, there exists a constant $S_{\max}>0$ such that
$0<\ubar{\rho}\le \hat\rho\le \bar{\rho}$.
If additionally $S_{\max}$ is sufficiently small such that $c S_{\max}\le \tfrac{1}{2} \sqrt{\ubar C_{p'}}$, 
then from the bounds on velocity and density we can deduce the following bounds for the eigenvalues
\begin{align} \label{eq:bound_lambda}
	0<\ubar{\Lambda}(S_{\max})\le \hat\lambda_+\le \Lambda(S_{\max}),\quad
	0>-\ubar{\Lambda}(S_{\max})\ge \hat\lambda_-\ge -\Lambda(S_{\max})
\end{align}
with $\ubar{\Lambda}(S_{\max})=\tfrac{1}{2} \sqrt{\ubar C_{p'}}$ and $\Lambda(S_{\max})=\tfrac{3}{2} \sqrt{\bar{C}_{p'}}$.
Furthermore, we can show
\begin{align*} 
	\big|\hat{\lambda}_\pm(S_+, S_-)- \hat{\lambda}_\pm(\tilde S_+, \tilde S_-)\big|
	\le L_{\lambda} (|S_+-\tilde S_+| + |S_--\tilde S_-| ),
\end{align*}
i.e., the eigenvalues are Lipschitz-continuous with Lipschitz constant
$L_{\lambda}\le \tfrac{c}{2} +\tfrac{c \bar\rho}{4 \ubar C _{p'}} \bar C_{p''}$,
where $\bar C_{p''}:=\max_{\ubar{\rho}\le\rho\le\bar{\rho}}| p''(\rho)|$.

Using these prerequisites we can show analogously to Lemma 5.1 in \cite{GugatUlbrich2018}
that, if $\mathcal{S}=(S_+, S_-)\in C([0,T]\times[0,\ell])^2$ is Lipschitz-continuous with respect to $x$ and satisfies $|S_{\pm}|\le S_{\max}$ and if $T\in(0,\min_{e\in\E}\tfrac{\ell^e}{\Lambda(S_{\max})})$,
the characteristic curves $\xi_\pm^\mathcal{S}(s,x,t)$ defined by
\begin{align*}
\xi_\pm^\mathcal{S}(t,x,t) = x, \quad 
\partial_s \xi_\pm^\mathcal{S}(s,x,t) =\lambda_\pm (\mathcal{S}(s, \xi_\pm^\mathcal{S}(s,x,t)))
\end{align*}
exist locally. They can be extended up to the boundary of $[0,T]\times[0,\ell]$
and are Lipschitz-continuous with respect to $x$.
Now, using the characteristic curves system \eqref{eq:obs_RI} can be written as
\begin{equation*}
\begin{split}
\partial_s S_\pm(s, \xi_\pm^\mathcal{S}(s,x,t))
&=  \mp\sigma (S_+, S_-)(s, \xi_\pm^\mathcal{S}(s,x,t)) \pm\tfrac{1}{c} \mathcal{L}_v (S_+, S_-)(s, \xi_\pm^\mathcal{S}(s,x,t))\\ 
&\quad+ \tfrac{\sqrt{p'(\hat{\rho})}}{c}\tfrac{1}{\hat{\rho}} \mathcal{L}_\rho(S_+, S_-)(s, \xi_\pm^\mathcal{S}(s,x,t)). 
\end{split}
\end{equation*}

In the following we will study the existence of solutions of the observer system for the case of velocity measurement or density measurement, i.e., with observer terms given by \eqref{eq:observer-term_v} and \eqref{eq:observer-term_rho}, respectively.
Therefore, we rewrite the observer terms $\mathcal{L}_\rho$, $\mathcal{L}_v$ in terms of the Riemann invariants ${R_\pm = \tilde P(\rho) \pm \tfrac{v}{c}}$ of the original system and  ${S_\pm = \tilde P(\hat \rho) \pm \tfrac{\hat v}{c}}$  of the observer system.
Using \eqref{eq:RI2rhov} we see that the system for velocity measurement, i.e., for $\mathcal{L}_\rho=0$, $\mathcal{L}_v=\mu  (v-\hat v)$, can be written as
\begin{equation}\label{eq:obs_RI_v-measurement}
\begin{split}
&\partial_s S_\pm(s, \xi_\pm^\mathcal{S}(s,x,t))\\
&=  \mp\sigma (S_+, S_-)(s, \xi_\pm^\mathcal{S}(s,x,t)) \pm \tfrac{\mu}{2} \left( R_+-R_- -(S_+-S_-)\right)(s, \xi_\pm^\mathcal{S}(s,x,t)).
\end{split}
\end{equation}
For measurement of the density we have $\mathcal{L}_v=0$, 
$\mathcal{L}_\rho=\mu\tfrac{c}{\sqrt{p'(\hat{\rho})}}\hat\rho (\tilde P(\rho)-\tilde P(\hat\rho))$
and therefore
\begin{equation}\label{eq:obs_RI_rho-measurement}
\begin{split}
&\partial_s S_\pm(s, \xi_\pm^\mathcal{S}(s,x,t))\\
&=  \mp\sigma (S_+, S_-)(s, \xi_\pm^\mathcal{S}(s,x,t)) 
+\tfrac{\mu}{2} \left(R_++R_--S_+-S_-\right)(s, \xi_\pm^\mathcal{S}(s,x,t)) .
\end{split}
\end{equation}

\begin{remark}
	In the following, we will show existence of semi-global Lipschitz-continuous
	solutions of the observer system for velocity measurement with observer terms \eqref{eq:observer-term_v}. 
	Since the system \eqref{eq:obs_RI_rho-measurement} for measurement of $\rho$ has the same structure as the system \eqref{eq:obs_RI_v-measurement} for measurement of $v$, i.e., both systems can be written as
	\begin{align*}
		\partial_s S_+ +\tfrac{\mu}{2} S_+ = -\sigma +b_+(R_+, R_-, S_-),\qquad
		\partial_s S_- +\tfrac{\mu}{2} S_- = +\sigma +b_-(R_+, R_-, S_+)
	\end{align*}
	for suitable functions $b_+, b_-$,
	existence of solutions of \eqref{eq:obs_RI_rho-measurement} can be shown analogously.
	The crucial point in the proof is that for measurements of velocity or density, the observer terms are linear when written in terms of Riemann invariants. However, this is not the case for measurement of the mass flow such that the same strategy cannot be used to show existence of solutions of the observer system for mass flow measurements.
\end{remark}

Before we present the existence proof, we first consider the coupling conditions, since we also show existence of solutions on networks.
One important step is to express the coupling conditions as well as the boundary conditions in Riemann invariants.

\subsection{Coupling and boundary conditions in Riemann invariants}
In order to show existence of solutions of the observer system for networks and for the case of boundary conditions for the mass flow or the enthalpy, we have to rewrite the coupling and boundary conditions in terms of Riemann invariants.
First, we consider the coupling conditions \eqref{eq:couplingc}.
Using that the mass flow can be written as
\begin{align*}
	m=\rho v = \tilde P^{-1}(\tfrac{1}{2}(R_++R_-)) \tfrac{c}{2} (R_+-R_-)
\end{align*}
and the specific enthalpy as
\begin{align*}
	h = \tfrac{1}{2} v^2 +P'(\rho)
	= \tfrac{1}{2} \left( \tfrac{c}{2} (R_+-R_-) \right)^2 + P'(\tilde P^{-1}(\tfrac{1}{2}(R_++R_-))),
\end{align*}
the coupling conditions in Riemann invariants are given by
\begin{align}
	\sum_{e\in\E(\nu)} \tilde P^{-1}(\tfrac{1}{2}(R_+^e+R_-^e))  (R_+^e-R_-^e)
	= 0, \label{eq:cc_RI_m}
\end{align}
\vspace{-0.4cm}
\begin{multline}
	\tfrac{c^2}{8}(R_+^e-R_-^e)^2+P'(\tilde P^{-1}(\tfrac{1}{2}(R_+^e+R_-^e)))= \\
	\qquad \tfrac{c^2}{8}(R_+^f-R_-^f)^2+P'(\tilde P^{-1}(\tfrac{1}{2}(R_+^f+R_-^f))),\quad \forall e,f\in\E(\nu) \label{eq:cc_RI_h}
\end{multline}
for any inner node $\nu\in\V\setminus \V_\partial$.
For ease of notation we have assumed that the direction of the incident pipes is such that all pipes start in the node $\nu$.
In order to show existence of solutions also on networks, 
we need the following lemma:
\begin{lem}  \label{lem:coupling_conditions}
	Consider an inner node $\nu$ of a pipe network, where $\nu$ has $n$ incident edges enumerated
	by $i=1,\ldots,n$.
	Then there exists some constant $S_{\max}>0$ 
	such that, if the incoming Riemann invariants $R_-=(R_-^1,\ldots, R_-^n)$ satisfy $|R_-^i|\le S_{\max}$, $i=1,\ldots,n$, 
	then there exist unique outgoing Riemann invariants $R_+=(R_+^1,\ldots,R_+^n)$
	satisfying the coupling conditions \eqref{eq:cc_RI_m}-\eqref{eq:cc_RI_h}.
	In this case there exists a constant $C(n)>0$ independent of $R_-$ such that the outgoing Riemann invariants are bounded by
	\begin{align*}
	|R_+|_\infty \le C(n) |R_-|_\infty.
	\end{align*}
\end{lem}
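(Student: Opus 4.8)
The plan is to solve the coupling conditions \eqref{eq:cc_RI_m}-\eqref{eq:cc_RI_h} by an implicit function theorem argument around the equilibrium state $R=0$, which corresponds to $\rho=\rhor$, $v=0$ on every incident edge. First I would introduce the map
\begin{align*}
F : \R^n\times\R^n \to \R^n, \qquad F(R_+,R_-) = \bigl(F_1,\ldots,F_n\bigr),
\end{align*}
where $F_1$ encodes mass conservation \eqref{eq:cc_RI_m}, i.e. $F_1(R_+,R_-) = \sum_{i=1}^n \tilde P^{-1}(\tfrac12(R_+^i+R_-^i))(R_+^i-R_-^i)$, and for $i=2,\ldots,n$ the component $F_i$ encodes the equality of the specific enthalpy on edge $i$ and edge $1$ from \eqref{eq:cc_RI_h}, i.e. $F_i(R_+,R_-) = g(R_+^i,R_-^i) - g(R_+^1,R_-^1)$ with $g(a,b):=\tfrac{c^2}{8}(a-b)^2 + P'(\tilde P^{-1}(\tfrac12(a+b)))$. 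Clearly $F(0,0)=0$, since at $R=0$ the mass flows vanish and the enthalpies all equal $P'(\rhor)$.

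Next I would compute the Jacobian $\partial_{R_+} F$ at $(0,0)$ and show it is invertible. At the equilibrium, $\tilde P^{-1}(0)=\rhor$ and $\partial_a g(0,0) = \tfrac{c^2}{4}(a-b)|_0 + P''(\rhor)(\tilde P^{-1})'(0)\cdot\tfrac12 = \tfrac12 P''(\rhor)(\tilde P^{-1})'(0) =: \kappa>0$, using $P''>0$ and $(\tilde P^{-1})'>0$. One finds that $\partial_{R_+^1} F_1 = \rhor$, $\partial_{R_+^i} F_1 = \rhor$ for all $i$, $\partial_{R_+^i} F_i = \kappa$, $\partial_{R_+^1} F_i = -\kappa$ (for $i\ge 2$), and $\partial_{R_+^j} F_i = 0$ for $j\neq 1,i$. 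This matrix has the block structure of a (scaled) arrowhead matrix; its determinant is $\pm \rhor\,\kappa^{n-1}\cdot n \neq 0$ (one can see this directly: the $n-1$ enthalpy rows let one solve $R_+^i$ in terms of $R_+^1$, and then mass conservation pins down $R_+^1$). Hence $\partial_{R_+}F(0,0)$ is invertible, and the implicit function theorem yields $S_{\max}>0$ and a $C^1$ map $\Phi$ on $\{|R_-|_\infty\le S_{\max}\}$ with $F(\Phi(R_-),R_-)=0$ and $\Phi(0)=0$; shrinking $S_{\max}$ if necessary guarantees $|R_+^i|\le S_{\max}$ so that the densities stay in $[\ubar\rho,\bar\rho]$ and the expressions above are well-defined. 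Uniqueness of $R_+$ in a neighborhood of $0$ is part of the implicit function theorem statement.

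For the final bound $|R_+|_\infty\le C(n)|R_-|_\infty$, I would use that $\Phi$ is $C^1$ with $\Phi(0)=0$, so the mean value inequality gives $|\Phi(R_-)|_\infty \le \bigl(\sup_{|r|_\infty\le S_{\max}} \|D\Phi(r)\|\bigr)|R_-|_\infty$, and the supremum is finite and depends only on $n$ (and the fixed pressure law and $\rhor$), which absorbs into $C(n)$; alternatively one reduces $S_{\max}$ so that $\|D\Phi\|\le 2\|D\Phi(0)\|$ on the ball. The dependence on $n$ enters only through the dimension of the system and the norm of $D\Phi(0) = -(\partial_{R_+}F(0,0))^{-1}\partial_{R_-}F(0,0)$, whose entries are explicit. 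I expect the main obstacle to be the invertibility computation: one must verify carefully that the arrowhead-type Jacobian $\partial_{R_+}F(0,0)$ is nonsingular for every $n$ (the degenerate case would be if the "hub" coefficient from mass conservation conspired with the enthalpy couplings), and also that $S_{\max}$ can be chosen uniformly so that along the way $\tilde P^{-1}$ stays in its domain and the sonic condition used elsewhere is not violated; both are genuine but routine checks given $P''\in[\Cp,\CP]$ and $p'\in[\ubar C_{p'},\bar C_{p'}]$.
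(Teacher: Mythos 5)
Your proposal is correct and follows essentially the same route as the paper: both set up $F(R_+,R_-)=0$ with the mass-balance row plus $n-1$ enthalpy-difference rows, verify invertibility of $\partial_{R_+}F(0,0)$ (the paper pairs consecutive edges, giving a bidiagonal block with an induction on the determinant, while your pairing against edge $1$ gives an arrowhead matrix with the cleaner determinant $n\rhor\kappa^{n-1}$), and then obtain the linear bound from $\Phi(0)=0$ together with a uniform bound on $D\Phi=-(\partial_{R_+}F)^{-1}\partial_{R_-}F$ near the origin. The only differences are cosmetic.
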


\begin{proof}
	We show the assertion by using the implicit function theorem.
	As a first step, we write \eqref{eq:cc_RI_m}-\eqref{eq:cc_RI_h} as
	\begin{align*}
	F(R_-,R_+)=\begin{pmatrix}
	0\\ \vdots \\0
	\end{pmatrix}
	\end{align*}
	with
	\begin{align*} 
		F(R_-, R_+):= \begin{pmatrix}
		\sum_{i=1}^n \tilde P^{-1}(\tfrac{1}{2}(R_+^i+R_-^i))  (R_+^i-R_-^i)\\
		\tfrac{c^2}{8}(R_+^2-R_-^2)^2+P'(\rho_2)-\tfrac{c^2}{8}(R_+^{1}-R_-^{1})^2-P'(\rho_1)\\
		\vdots\\
		\tfrac{c^2}{8}(R_+^n-R_-^n)^2+P'(\rho_n)-
		\tfrac{c^2}{8}(R_+^{n-1}-R_-^{n-1})^2-P'(\rho_{n-1})
		\end{pmatrix}
	\end{align*}
	with $\rho_i =\tilde P^{-1}(\tfrac{1}{2}(R_+^i+R_-^i))$.
	The entries of the derivative of $F$ with respect to $R_+$ are given by
	\begin{align*}
		&\left(\frac{\partial F}{\partial R_+}\right)_{1,i}=   
		(\tilde P^{-1})'(\tfrac{1}{2}(R_+^i+R_-^i)) \tfrac{1}{2}  (R_+^i-R_-^i) + \rho_i 
		,\quad&& i\in\{1, \ldots, n\},\\
		&\left(\frac{\partial F}{\partial R_+}\right)_{i,i}= \tfrac{c^2}{4} (R_+^i-R_-^i) + P''(\rho_i) (\tilde P^{-1})'(\tfrac{1}{2}(R_+^i+R_-^i)) \tfrac{1}{2}
		,\quad&& i\in\{2, \ldots, n\},\\
		&\left(\frac{\partial F}{\partial R_+}\right)_{i,i-1}= -\tfrac{c^2}{4} (R_+^{i-1}-R_-^{i-1})
		 - P''(\rho_{i-1}) (\tilde P^{-1})'(\tfrac{1}{2}(R_+^{i-1}+R_-^{i-1})) \tfrac{1}{2}
		,\quad&& i\in\{2, \ldots, n\}
	\end{align*}
	and all other entries of $\frac{\partial F}{\partial R_+}$ are zero.
	For $R_-=0$, $ R_+=0$, we have $F(0, 0)=0$ and
	\begin{align*}
		\frac{\partial F}{\partial R_+}(0,0)
		= \begin{pmatrix}
		\rhor&\rhor&\rhor& \hdots& \rhor\\
		-a & a&0&  \ldots& 0\\
		\vdots & \ddots& \ddots & &\vdots \\
		0 &\ldots & -a& a& 0\\
		0 &\ldots &  0 & -a& a
		\end{pmatrix},
	\end{align*}
	where $a:=\tfrac{1}{2}P''(\rhor)\rhor$ and $a>0$ since $P$ is strictly convex.
	We can show by induction that $\det\big(\tfrac{\partial F}{\partial R_+}(0,0)\big)>0$, i.e., $\tfrac{\partial F}{\partial R_+}(0,0)$ is invertible.
	Therefore the implicit function theorem provides the existence of some open neighborhood $W\subset\mathbb{R}^n$ of $0$ and a unique, continuous function $f:W\to \mathbb{R}^n$, $R_-\mapsto R_+=f(R_-)$
	such that $F(R_-, f(R_-))=0$. 
	This means that for incoming Riemann invariants $R_-$ with $|R_-^i|\le S_{\max}$, $i=1,\ldots,n$, for some $S_{\max}>0$ with $B_{S_{\max}}(0)\subset W$,
	the outgoing Riemann invariants $R_+$ are given by $R_+=f(R_-)$.
	
	It remains to show that we have a bound of the form $|R_+|_{\infty}\le C |R_-|_{\infty}$ for some constant $C>0$.
	In order to show this note that the implicit function theorem implies
	\begin{align*}
		\frac{\partial f}{\partial R_-}(R_-) = - \left(\frac{\partial F}{\partial R_+}(R_-,f(R_-))\right)^{-1} \frac{\partial F}{\partial R_-}(R_-,f(R_-)).
	\end{align*}
	Since all components of $F$ are continuously differentiable, $\big(\tfrac{\partial F}{\partial R_+}(R_-,f(R_-))\big)^{-1} $ and \linebreak$\frac{\partial F}{\partial R_-}(R_-,f(R_-))$ depend continuously on $R_-$, i.e., for $(R_-,f(R_-))$ in a neighbourhood of $(0,0)$ also the derivatives $\big(\tfrac{\partial F}{\partial R_+}(R_-,f(R_-))\big)^{-1} $ and $\frac{\partial F}{\partial R_-}(R_-,f(R_-))$ are close to $\big(\tfrac{\partial F}{\partial R_+}(0,0)\big)^{-1} $ and $\frac{\partial F}{\partial R_-}(0,0)$, respectively.
	Therefore, there exists a constant $C(n)$ such that, if we choose $S_{\max}$ sufficiently small, then
	\begin{align*}
		\big|\frac{\partial f}{\partial R_-}(R_-)\big|_{\infty} \le  \big| \left(\frac{\partial F}{\partial R_+}(R_-,f(R_-))\right)^{-1}\big|_{\infty}  \big|\frac{\partial F}{\partial R_-}(R_-,f(R_-))\big|_{\infty} \le C(n) 
	\end{align*}
	for all incoming Riemann invariants $R_-$ that satisfy $|R_-|_{\infty}\le S_{\max}$.
\end{proof}

Now, we consider the boundary conditions for the original system and the observer system. For the synchronization proof in Section \ref{sec:exp_synchronization}, the original and observer system are complemented by boundary conditions of the form
\begin{align}\label{eq:BC_existence}
	m(t,\nu)=\hat m(t,\nu)=m_b(t) \quad \text{or} \quad
	h(t,\nu)=\hat h(t,\nu)=h_b(t), \qquad 0\le t\le T,\ \nu\in\V_{\partial}
\end{align}
for the mass flow $m$ or the total specific enthalpy $h$. In order to deal with such boundary conditions in the proof of existence of solutions, we have to transform the boundary conditions \eqref{eq:BC_existence} to Riemann invariants.

\begin{lem} \label{lem:boundary_cond}
	Let $\nu\in\V_{\partial}$ be a boundary node and denote by $R_-$ the Riemann invariant that is directed into the node $\nu$ and by $R_+$ the Riemann invariant that is directed out of the node $\nu$. Consider the boundary conditions \eqref{eq:BC_existence}.
	Then there exists a constant $S_{\max}>0$ such that, if
	\begin{align}
		\tilde P^{-1}(-\tfrac{1}{8}S_{\max})(-\tfrac{c}{8}S_{\max})
		&\le m_b \le 
		\tilde P^{-1}(\tfrac{1}{8}S_{\max}) \tfrac{c}{8}S_{\max},\label{eq:BC_bound_mb}\\
		\tfrac{c^2}{8} (-\tfrac{1}{4}S_{\max})^2 + P'\left(\tilde  P^{-1}(-\tfrac{1}{8}S_{\max})\right)
		&\le h_b\le
		\tfrac{c^2}{8} (\tfrac{1}{4}S_{\max})^2 + P'\left(\tilde  P^{-1}(\tfrac{1}{8}S_{\max})\right)  \label{eq:BC_bound_hb}
	\end{align}
	and $|R_-|\le S_{\max}$, then there exists a unique outgoing Riemann invariant $R_+$ such that $(R_+, R_-)$ satisfies the boundary conditions \eqref{eq:BC_existence} and $R_+$ is bounded by
	\begin{align*}
		|R_+| \le \tfrac{1}{4} S_{\max} + 3 |R_-|.
	\end{align*}	

	For fixed $R_-$ with $|R_-|\le S_{\max}$, but different boundary values $m_b,\,\tilde m_b$ (or $h_b,\,\tilde h_b$) that satisfy \eqref{eq:BC_bound_mb}-\eqref{eq:BC_bound_hb},
	$R_+$ depends on the boundary data Lipschitz continuously, i.e.,
	we can estimate
	\begin{align} \label{eq:bound_BC_wrt_mb}
		|R_+(m_b)-R_+(\tilde m_b)|\le C_b |m_b-\tilde m_b|,\qquad 
		|R_+(h_b)-R_+(\tilde h_b)|\le C_b |h_b-\tilde h_b|
	\end{align}
	with a constant $0<C_b\le \max\{\tfrac{4}{c\ubar\rho}, \tfrac{4}{c\sqrt{\ubar C_{p'}}}\}$.
\end{lem}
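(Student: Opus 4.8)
The plan is to reduce each of the two boundary conditions in \eqref{eq:BC_existence} to a single scalar equation for the outgoing Riemann invariant $R_+$, in which the incoming invariant $R_-$ and the boundary datum enter as parameters, and then to solve that equation by monotonicity (in the spirit of the proof of Lemma~\ref{lem:coupling_conditions}). By \eqref{eq:RI2rhov} evaluated at the boundary node, the boundary state is $v=\tfrac c2(R_+-R_-)$, $\rho=\tilde P^{-1}(\tfrac12(R_++R_-))$ when $\nu$ is the left endpoint of its incident pipe, and $v=-\tfrac c2(R_+-R_-)$ with the same $\rho$ when $\nu$ is the right endpoint; I will carry out the argument in the first case, the second being identical with the role of $\lambda_+$ taken over by $\lambda_-$. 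Substituting into $m=\rho v$ and $h=\tfrac12v^2+P'(\rho)$ gives functions $\Phi(R_+;R_-):=m$ and $\Psi(R_+;R_-):=h$, so that \eqref{eq:BC_existence} reads $\Phi(R_+;R_-)=m_b$, respectively $\Psi(R_+;R_-)=h_b$.

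The heart of the matter is a short computation: using $\tilde P'(\rho)=\tfrac{\sqrt{p'(\rho)}}{c\rho}$, hence $(\tilde P^{-1})'(y)=\tfrac{c\rho}{\sqrt{p'(\rho)}}$ with $\rho=\tilde P^{-1}(y)$, together with $p'(\rho)=\rho P''(\rho)$, one obtains the compact formulas
\[
\partial_{R_+}\Phi=\frac{c\rho}{2\sqrt{p'(\rho)}}\,\lambda_+,\qquad
\partial_{R_+}\Psi=\frac c2\,\lambda_+,\qquad \lambda_+=v+\sqrt{p'(\rho)}.
\]
Since the smallness condition $cS_{\max}\le\tfrac12\sqrt{\ubar{C}_{p'}}$ keeps the flow subsonic, $\lambda_+\ge\sqrt{p'(\rho)}-|v|\ge\tfrac12\sqrt{p'(\rho)}\ge\tfrac12\sqrt{\ubar{C}_{p'}}=\ubar{\Lambda}(S_{\max})>0$ by \eqref{eq:bound_lambda}; hence $\partial_{R_+}\Phi$ and $\partial_{R_+}\Psi$ are strictly positive, and bounded, uniformly over the admissible range of $(R_+,R_-)$. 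Therefore $\Phi(\cdot;R_-)$ and $\Psi(\cdot;R_-)$ are strictly increasing $C^1$ bijections of an interval about $R_-$ onto their ranges, and $R_+$ is recovered by inversion. Existence and uniqueness of $R_+$ then hold once $m_b$, respectively $h_b$, is known to lie in the attained range — which is exactly what the hypotheses \eqref{eq:BC_bound_mb}--\eqref{eq:BC_bound_hb} encode, since their endpoints are $\Phi$, respectively $\Psi$, evaluated at the extreme admissible states (for instance the upper endpoint in \eqref{eq:BC_bound_hb} equals $\Psi(\tfrac14S_{\max};0)$). Equivalently one could invoke the implicit function theorem at $R_+=R_-=0$, where $\partial_{R_+}\Phi(0;0)=\tfrac c2\rhor\neq0$ and $\partial_{R_+}\Psi(0;0)=\tfrac12P''(\rhor)\rhor\neq0$, and extend the local solution by the global monotonicity after shrinking $S_{\max}$.

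The Lipschitz estimate \eqref{eq:bound_BC_wrt_mb} then falls out directly: differentiating the identities $\Phi(R_+(m_b);R_-)=m_b$ and $\Psi(R_+(h_b);R_-)=h_b$ gives $R_+'(m_b)=1/\partial_{R_+}\Phi$ and $R_+'(h_b)=1/\partial_{R_+}\Psi$, so with $\lambda_+\ge\tfrac12\sqrt{p'(\rho)}$ and $\rho\ge\ubar\rho$ the formulas above yield
\[
|R_+'(m_b)|=\frac{2\sqrt{p'(\rho)}}{c\rho\,\lambda_+}\le\frac{4}{c\ubar\rho},\qquad
|R_+'(h_b)|=\frac{2}{c\,\lambda_+}\le\frac{4}{c\sqrt{\ubar{C}_{p'}}},
\]
and integrating over the segment between the two boundary values gives \eqref{eq:bound_BC_wrt_mb} with $C_b=\max\{\tfrac4{c\ubar\rho},\tfrac4{c\sqrt{\ubar{C}_{p'}}}\}$. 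For the a priori bound $|R_+|\le\tfrac14S_{\max}+3|R_-|$ one writes $|R_+|\le\tfrac2c|v|+|R_-|$ and controls $|v|$: in the mass-flow case via $|v|=|m_b|/\rho$ with $\rho\ge\ubar\rho$ and the explicit bound \eqref{eq:BC_bound_mb}, and in the enthalpy case via the monotone-inversion estimate $R_+\le(\Psi(\cdot;0))^{-1}\bigl(h_b+\sup|\partial_{R_-}\Psi|\,|R_-|\bigr)$ using $\partial_{R_-}\Psi=-\tfrac c2\lambda_-$, $|\lambda_-|\le\Lambda(S_{\max})$, the slope bound $\partial_{R_+}\Psi\ge\tfrac c2\ubar{\Lambda}(S_{\max})$, and the explicit upper endpoint of \eqref{eq:BC_bound_hb}.

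The conceptual steps are short; the only delicate point is the bookkeeping in this last step — choosing $S_{\max}$ and verifying that the explicit numerical constants $\tfrac14$ and $3$ in the a priori bound are consistent with the constants fixed in the admissibility conditions \eqref{eq:BC_bound_mb}--\eqref{eq:BC_bound_hb} (here one uses that $\tilde P^{-1}$ is close to linear and the density and eigenvalue ratios are close to $1$ for small $S_{\max}$), while carrying the two orientations and the two types of boundary condition through uniformly.
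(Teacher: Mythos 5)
Your reduction to a scalar monotone equation $\Phi(R_+;R_-)=m_b$, $\Psi(R_+;R_-)=h_b$ with $\partial_{R_+}\Phi=\tfrac{c\rho}{2\sqrt{p'(\rho)}}\lambda_+>0$, $\partial_{R_+}\Psi=\tfrac c2\lambda_+>0$ is essentially the paper's own argument (the paper writes $G(R_+,R_-)=\tilde P^{-1}(\tfrac12(R_++R_-))\tfrac c2(R_+-R_-)-m_b$ and uses exactly these sign and size properties, and in fact only carries out the $m$-case); your existence/uniqueness step, the identification of the endpoints of \eqref{eq:BC_bound_mb}--\eqref{eq:BC_bound_hb} with $\Phi(\pm\tfrac14 S_{\max};0)$, $\Psi(\pm\tfrac14 S_{\max};0)$, and the Lipschitz bound \eqref{eq:bound_BC_wrt_mb} via $R_+'(m_b)=1/\partial_{R_+}\Phi\le\tfrac4{c\ubar\rho}$, $R_+'(h_b)=1/\partial_{R_+}\Psi\le\tfrac4{c\sqrt{\ubar C_{p'}}}$ all match the paper.

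The one genuine gap is the a priori bound $|R_+|\le\tfrac14 S_{\max}+3|R_-|$ with exactly these constants. Your mass-flow route $|R_+|\le|R_-|+\tfrac2c|v|$, $|v|\le|m_b|/\ubar\rho$ gives only $|R_+|\le\tfrac14\tfrac{\tilde P^{-1}(S_{\max}/8)}{\ubar\rho}S_{\max}+|R_-|$, and your enthalpy route with the decoupled bounds $\sup|\partial_{R_-}\Psi|\le\tfrac c2\Lambda(S_{\max})$, $\inf\partial_{R_+}\Psi\ge\tfrac c2\ubar\Lambda(S_{\max})$ gives the slope $\Lambda/\ubar\Lambda=3\sqrt{\bar C_{p'}/\ubar C_{p'}}>3$; in both cases the loss factor is strictly larger than $1$ for every fixed $S_{\max}>0$, and it cannot be absorbed by taking $S_{\max}$ small, because the claimed inequality is attained with equality (take $R_-=0$ and $m_b$, resp.\ $h_b$, equal to the upper endpoint of \eqref{eq:BC_bound_mb}, resp.\ \eqref{eq:BC_bound_hb}; then $R_+=\tfrac14 S_{\max}$ exactly). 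So ``the ratios are close to $1$'' does not close this step. The repair is the paper's two-step argument, and it needs nothing beyond what you already computed: first, at $R_-=0$ the monotonicity of $\Phi(\cdot;0)$, $\Psi(\cdot;0)$ together with the endpoint identification gives the reference solution $|R_+^\star|\le\tfrac14 S_{\max}$ with no loss; second, differentiating the implicit relation along the solution curve gives the pointwise slope $\bigl|\tfrac{dR_+}{dR_-}\bigr|=\tfrac{|\partial_{R_-}\Phi|}{\partial_{R_+}\Phi}=\tfrac{|\partial_{R_-}\Psi|}{\partial_{R_+}\Psi}=\tfrac{|\lambda_-|}{\lambda_+}=\tfrac{\sqrt{p'(\rho)}-v}{\sqrt{p'(\rho)}+v}\le\tfrac{1+\tfrac12}{1-\tfrac12}=3$ for $|v|\le\tfrac12\sqrt{p'(\rho)}$ (same state in numerator and denominator, unlike your sup/inf bound), whence $|R_+|\le|R_+^\star|+3|R_--0|\le\tfrac14 S_{\max}+3|R_-|$. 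With that replacement your proposal proves the lemma, and it even covers the enthalpy boundary condition, which the paper leaves to the reader.
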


\begin{proof}
	We only give the proof for boundary conditions for the mass flow. In this case, we can write \eqref{eq:BC_existence} in terms of Riemann invariants as
	\begin{align*}
		0=G(R_+, R_-)
		= \tilde P^{-1}(\tfrac{1}{2} (R_++R_-)) \tfrac{c}{2}(R_+-R_-) - m_b
	\end{align*}
	with 
	\begin{align*}
		\frac{\partial G}{\partial R_+} (R_+, R_-) &= \tfrac{c}{2} \rho \big(1+\tfrac{c}{2}(R_+-R_-) \tfrac{1}{\sqrt{p'(\rho)}} \big),\\
		\frac{\partial G}{\partial R_-} (R_+, R_-) &= - \tfrac{c}{2} \rho \big(1-\tfrac{c}{2}(R_+-R_-) \tfrac{1}{\sqrt{p'(\rho)}} \big),
	\end{align*}
	where $\rho=P^{-1}(\tfrac{1}{2} (R_++R_-))$.
	If $|R_\pm|\le \bar S_{\max}$ for some $0<\bar S_{\max}\le\tfrac{1}{2c} \sqrt{\ubar C_{p'}}$, then $\tfrac{\partial G}{\partial R_+}>0$ and $\tfrac{\partial G}{\partial R_-}<0$, i.e., the contour line of $G(R_+, R_-)=0$ is monotonously increasing. This means that there exists a function $g:[-S_{\max}, S_{\max}]\rightarrow \R$ such that $g(R_-)=R_+$ if $G(R_+, R_-)=0$, where $S_{\max}>0$ has to be chosen sufficiently small such that $|R_\pm|\le \bar S_{\max}$.
	
	Note that
	\begin{align*}
		\left|\frac{\partial R_+}{\partial R_-}\right| = \left|   \frac{\tfrac{\partial G}{\partial R_-}}{\tfrac{\partial G}{\partial R_+}}\right|
		\le \frac{1+c \bar S_{\max}\tfrac{1}{\sqrt{\ubar C_{p'}}}}{1-c \bar S_{\max}\tfrac{1}{\sqrt{\ubar C_{p'}}}}
		\le 3
	\end{align*}
	for $|R_\pm|\le \bar S_{\max}\le \tfrac{1}{2c} \sqrt{\ubar C_{p'}}$.
	Now, let $R_-^\star=0$ and let $R_+^\star$ be defined by $G(R_+^\star, R_-^\star)=0$.
	If \eqref{eq:BC_bound_mb} is satisfied, then there exists a unique $R_+^\star$ with $|R_+^\star|\le \tfrac{1}{4} S_{\max} $.
	Thus, for $|R_-|\le S_{\max}$ and $S_{\max}<\tfrac{1}{4} \bar S_{\max}$ the Riemann invariant $R_+=g(R_-)$ satisfies
	\begin{align*}
		|R_+|\le |R_+^\star| + |R_+- R_+^\star|
		\le \tfrac{1}{4} S_{\max} + \sup\{|\tfrac{\partial R_+}{\partial R_-}|\} |R_-- R_-^\star|
		\le \tfrac{1}{4} S_{\max} + 3 |R_-|
		\le \bar S_{\max},
	\end{align*}
	i.e., the boundary condition is invertible for $|R_-|\le S_{\max}$ if the bound \eqref{eq:BC_bound_mb} on $m_b$ is satisfied.
	
	In order to derive the bound \eqref{eq:bound_BC_wrt_mb}, we estimate
	\begin{align*}
		\left|\frac{\partial R_+}{\partial m_b}\right| = \left|   \frac{\tfrac{\partial G}{\partial m_b}}{\tfrac{\partial G}{\partial R_+}}\right|
		\le \frac{1}{\tfrac{c}{2} \rho \big(1+\tfrac{c}{2}(R_+-R_-) \tfrac{1}{\sqrt{p'(\rho)}} \big) }
		\le \frac{4}{c \ubar\rho},
	\end{align*}
	where the last inequality holds for $|R_\pm|\le \bar S_{\max}$, which is satisfied for $S_{\max}$ sufficiently small.
\end{proof}

\subsection{Existence of solutions of the observer system for measurement of $v$}
By formulating the observer system \eqref{eq:obs_RI_v-measurement} for velocity measurements as a fixed-point iteration along the characteristic curves we can show the following existence theorem:

\begin{thm} \label{thm:localExistence}
	Let some $\mu\ge0$ be given.
	Consider a pipe network, where any inner node has at most $n$ incident edges. 
	Let some 
	$T\in(0, \min_{e\in\E}\tfrac{\ell^e}{\Lambda(S_{\max})})$ such that
	\linebreak  $1-\e^{-\tfrac{\mu}{2} T}\le \tfrac{1}{12}\min\{1,\tfrac{1}{4+C(n)}
	\tfrac{4}{9}\tfrac{\ubar\Lambda(S_{\max})}{\Lambda(S_{\max})}\}$ 
	with the constant $C(n)$ from Lemma \ref{lem:coupling_conditions} be given
	and consider the system \eqref{eq:obs_RI_v-measurement}, i.e., the observer system for measurement of $v$, together with the initial conditions 
		\begin{align}
			S_{\pm,e}(0,x)=y_\pm^e(x), \quad  e\in\E,\, x\in(0,\ell^e) \label{eq:obs_RI_IC}. 		
		\end{align}
	For the boundary conditions we use one of the following two cases.
	
	\textbf{Case (i):} We complement the system by the boundary conditions
	\begin{align}
		S_{\text{out},e}(t,\nu)=u_{\text{out}}^e(t) 
		, \quad  \nu\in\V_\partial,\, e\in\E(\nu),\, t\in(0,T), \label{eq:obs_RI_BC}
	\end{align}
	where $S_{\text{out},e}(t,\nu)$ denotes the Riemann invariant that is directed out of the node $\nu$, and assume that the boundary and initial data are $C^0$-compatible, i.e., ${y_\text{out}^e(\nu)=u_{\text{out}}^e(0)}$ for $\nu\in\V_\partial$, $e\in\E(\nu)$.
	In addition, we assume that the boundary data $u_\text{out}^e$ is bounded by $B_{\max}$, i.e.,
	\begin{align*}
		\max_{e\in\E} \sup_{t\in[0,T]}  |u_\text{out}^e(t)|
		\le B_{\max} ,
	\end{align*} and $u_\text{out}^e$, $e\in\E$, is Lipschitz-continuous with the Lipschitz-constant $L_I$.
	
	\textbf{Case (ii):}
	We complement the system by boundary conditions of the form \eqref{eq:BC_existence} for the mass flow $m$ or the specific enthalpy $h$ and assume that the boundary and initial data are $C^0$-compatible, i.e., $y_\pm^e(\nu)$ satisfies \eqref{eq:BC_existence} for $\nu\in\V_\partial$, $e\in\E(\nu)$.
	In addition, we assume that the bounds \eqref{eq:BC_bound_mb}-\eqref{eq:BC_bound_hb} are satisfied and the boundary data $ m_b$ and $h_b$ are Lipschitz-continuous with Lipschitz-constant $L_I$.
	
	Moreover, in both cases we assume that the initial data is compatible with the coupling conditions \eqref{eq:cc_RI_m}-\eqref{eq:cc_RI_h},
	the initial data $y_\pm^e$ is bounded by $B_{\max}$, i.e.,
		\begin{align*}
		\max_{e\in\E} \sup_{x\in[0,\ell^e]}  |y_\pm^e(x)|
		\le B_{\max} ,
	\end{align*}
	and $y_\pm^e$, $e\in\E$, is Lipschitz-continuous with the Lipschitz-constant $L_I$.

	For $\mu>0$, assume in addition that the original system \eqref{eq:system_1}-\eqref{eq:system_2} (written in Riemann invariants, i.e., as \eqref{eq:obs_RI} with $\mathcal{L}_v=0, \, \mathcal{L}_\rho=0$) together 
	with the boundary data
	\begin{align}
		R_{\text{out},e}(t,\nu)=u_{\text{out}}^e(t), \quad & \nu\in\V_\partial,\, e\in\E(\nu),\, t\in(0,T) \label{eq:sys_RI_BC}
	\end{align}
	in Case (i) or with the boundary conditions \eqref{eq:BC_existence} in Case (ii) and with
	the coupling conditions \eqref{eq:cc_RI_m}-\eqref{eq:cc_RI_h}
	has a unique solution $\mathcal R =((R_{+,e}, R_{-,e}))_{e\in\E}$, $(R_{+,e}, R_{-,e})\in C^0([0,T]\times[0,\ell^e])$ that satisfies
	$\|R_{\pm,e}\|_{L^\infty([0,T]\times[0,\ell])}\le S_{\max}$ and is Lipschitz-continuous with respect to $x$ with Lipschitz-constant bounded by $L_R$.
	
	Then, if $B_{\max}$, $L_I$, $S_{\max}$ and  $L_R$ are sufficiently small, the system \eqref{eq:obs_RI_v-measurement} 
	together with the initial conditions \eqref{eq:obs_RI_IC}, the boundary conditions of Case (i) or (ii) 
	and the coupling conditions \eqref{eq:cc_RI_m}-\eqref{eq:cc_RI_h}
	admits a unique solution in 
	\begin{align*}
		M(S_{\max}, L_R)
		:=&\{ \mathcal S =((S_{+,e}, S_{-,e})) _{e\in\E}, \, (S_{+,e}, S_{-,e})\in C^0([0,T]\times [0,\ell^e]): \\
		&|S_{\pm,e}|\le S_{\max}\ \forall e\in\E 
		\text{ and all $S_{\pm,e}$ are  Lipschitz-continous }\\
		&\text{ with respect to $x$	with Lipschitz-constant } L_R    \}.
	\end{align*}	
\end{thm}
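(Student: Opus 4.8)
The plan is to prove Theorem \ref{thm:localExistence} via a fixed-point argument on the space $M(S_{\max}, L_R)$, mimicking the strategy of Lemma 5.1 and Theorem 6.1 of \cite{GugatUlbrich2018} but with the fixed-point map adapted to the linear-in-Riemann-invariants structure of the observer term. Given a candidate $\mathcal S = ((S_{+,e}, S_{-,e}))_{e\in\E} \in M(S_{\max}, L_R)$, I first construct its characteristic curves $\xi_\pm^{\mathcal S^e}(s,x,t)$ using the local existence and extension statement recalled just before Theorem \ref{thm:localExistence}, which is applicable since each $S_{\pm,e}$ is Lipschitz in $x$, bounded by $S_{\max}$, and $T < \min_e \ell^e / \Lambda(S_{\max})$. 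Then I define the image $\tilde{\mathcal S} = \Phi(\mathcal S)$ by integrating the ODE \eqref{eq:obs_RI_v-measurement} along these curves. The key point, exploiting that the observer term is \emph{linear} in $S_\pm$, is to treat the terms $\mp\tfrac{\mu}{2}(\mp S_\pm)$ (i.e. the $-\tfrac{\mu}{2}S_+$ and $-\tfrac{\mu}{2}S_-$ contributions) as a linear damping term rather than a source: rewriting \eqref{eq:obs_RI_v-measurement} as $\partial_s S_\pm + \tfrac{\mu}{2}S_\pm = \mp\sigma(S_+,S_-) + b_\pm$ along characteristics, where $b_\pm = \pm\tfrac{\mu}{2}(R_+ - R_-) \pm \tfrac{\mu}{2}S_\mp$ (with $S_\mp$ evaluated on the curve) depends only on the \emph{fixed} input $\mathcal R$ and on the opposite family, the variation-of-constants formula gives
\begin{align*}
	\tilde S_\pm(t,x) = \e^{-\tfrac{\mu}{2}(t-s_0)} S_\pm(s_0, \cdot) + \int_{s_0}^{t} \e^{-\tfrac{\mu}{2}(t-s)}\big(\mp\sigma + b_\pm\big)\, ds,
\end{align*}
where $s_0$ is either $0$ (curve reaches the initial line) or the entry time into the spatial domain through a node (curve reaches a boundary/coupling point), and $S_\pm(s_0,\cdot)$ is supplied by the initial data $y_\pm$ or by the boundary/coupling resolution from Lemmas \ref{lem:coupling_conditions} and \ref{lem:boundary_cond}. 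The damping factor $\e^{-\tfrac{\mu}{2}(t-s)} \le 1$ is what makes the whole scheme work for fixed $\mu > 0$ with a source term that would otherwise violate the smallness conditions of \cite{GugatUlbrich2018}.

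The next step is to verify that $\Phi$ maps $M(S_{\max}, L_R)$ into itself. For the sup-norm bound, I estimate $|\tilde S_\pm(t,x)|$ using the representation above: the contribution from the initial/boundary value is bounded by $B_{\max}$ (possibly amplified by the factor $C(n)$ from Lemma \ref{lem:coupling_conditions} or the factor $3$ and additive $\tfrac14 S_{\max}$ from Lemma \ref{lem:boundary_cond} when passing through a node), the friction contribution is $O(\gamma T S_{\max}^2)$ by the explicit form of $\sigma$, and the source $b_\pm$ contributes at most $\tfrac{\mu}{2}\int_{s_0}^t \e^{-\tfrac{\mu}{2}(t-s)}\, ds \cdot (2S_{\max} + 2S_{\max}) \le (1-\e^{-\tfrac{\mu}{2}T})\cdot 2 S_{\max}$; here the hypothesis $1-\e^{-\tfrac{\mu}{2}T} \le \tfrac{1}{12}\min\{1, \ldots\}$ is precisely what closes the estimate $|\tilde S_\pm| \le S_{\max}$, provided $B_{\max}$ is chosen small relative to $S_{\max}$ (accounting for the node amplification, which requires $T$ small enough — via the $1-\e^{-\mu T/2}$ bound — that the cumulative number of node crossings in time $T$ is controlled, or equivalently that a single crossing's amplification $4 + C(n)$ is absorbed). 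For the Lipschitz-in-$x$ bound, I differentiate the representation formula in $x$: this involves $\partial_x \xi_\pm^{\mathcal S}$, which is Lipschitz-controlled by the recalled characteristic-curve lemma (with constant depending on $L_\lambda$ and $L_R$), the Lipschitz constant $L_I$ of the data, the Lipschitz dependence of the boundary/coupling maps (the constant $C(n)$ and $C_b$ from the two lemmas), and the ratio $\ubar\Lambda/\Lambda$ that appears when a characteristic enters through a node at an angle — this is the origin of the factor $\tfrac{4}{9}\tfrac{\ubar\Lambda(S_{\max})}{\Lambda(S_{\max})}$ in the smallness hypothesis on $1-\e^{-\mu T/2}$. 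Choosing $L_R$ large enough to absorb $L_I$ and the data-independent constants, then $T$ (through $1-\e^{-\mu T/2}$) small enough to make the self-map contraction-like in the Lipschitz seminorm, gives $\Phi(M) \subseteq M$.

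Finally, I show $\Phi$ is a contraction on $M$ with respect to, say, the $C^0$-norm (uniformly in $x$ and $t$ over the network), so that Banach's fixed-point theorem yields the unique solution. Given $\mathcal S, \mathcal S' \in M$, I compare $\Phi(\mathcal S)$ and $\Phi(\mathcal S')$: the differences come from (a) the difference of characteristic curves $\xi_\pm^{\mathcal S} - \xi_\pm^{\mathcal S'}$, controlled by a Grönwall argument in $s$ against $\|\mathcal S - \mathcal S'\|_\infty$ with constant $O(L_\lambda T \e^{L_\lambda T})$; (b) the difference of the friction integrand, Lipschitz in $\mathcal S$ with constant $O(\gamma S_{\max})$; and (c) the difference of $b_\pm$, which under the damping factor contributes $\le (1-\e^{-\mu T/2}) \|\mathcal S - \mathcal S'\|_\infty$ plus node-map Lipschitz terms $C(n)\|\mathcal S-\mathcal S'\|_\infty$ or $C_b(\cdots)$ again weighted by $1-\e^{-\mu T/2}$. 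Collecting, the contraction constant is bounded by a quantity of the form $C_1 T + C_2(1-\e^{-\mu T/2})(4+C(n))$, which the hypothesis on $1-\e^{-\mu T/2}$ together with $T$ small makes strictly less than $1$. The $\mu = 0$ case is the degenerate one where the damping factor is $1$ and the source term $b_\pm$ vanishes entirely (the observer term is zero), recovering the original system; there the argument reduces to that of \cite{GugatUlbrich2018} adapted to the enthalpy-continuity coupling, with Lemma \ref{lem:coupling_conditions} replacing the pressure-continuity node analysis. I expect the main obstacle to be the bookkeeping of constants through node crossings — ensuring that the amplification factor $4 + C(n)$ (or $3$ plus additive terms) at each inner/boundary node does not compound uncontrollably over the time interval $[0,T]$; this is exactly why the hypothesis bounds $1-\e^{-\mu T/2}$ by $\tfrac{1}{12}$ times a constant involving $C(n)$ and $\ubar\Lambda/\Lambda$, and verifying that this single inequality suffices for both the self-map and contraction estimates simultaneously is the delicate part.
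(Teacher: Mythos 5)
Your proposal follows essentially the same route as the paper's proof: the modified fixed-point map that keeps the $-\tfrac{\mu}{2}S_\pm$ part of the observer term implicit on the left-hand side, the variation-of-constants representation along characteristics with the damping factor $\e^{-\mu(t-s)/2}$, self-mapping in both the sup-norm and the Lipschitz-in-$x$ seminorm with node amplification controlled by Lemmas \ref{lem:coupling_conditions} and \ref{lem:boundary_cond} (each characteristic crossing at most one node since $T<\min_e \ell^e/\Lambda(S_{\max})$), and a $C^0$-contraction estimate closed by the hypothesis on $1-\e^{-\mu T/2}$ before invoking Banach's fixed-point theorem. Apart from minor slips (a sign in your formula for $b_-$, and the smallness conditions requiring $L_I$ small relative to $L_R$ with $L_R$ itself small rather than ``$L_R$ large enough''), this matches the paper's argument.
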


\begin{remark} \label{rem:bounds_rho-v}
	Theorem \ref{thm:localExistence} provides the existence of Lipschitz-continuous solutions satisfying the bounds $|S_\pm|\le S_{\max}$. This implies the bounds
	\begin{align*}
		|\hat v|\le c S_{\max}=:\bar v,\quad
		\ubar\rho:=\tilde P^{-1}(-S_{\max})\le \hat\rho\le \tilde P^{-1}(S_{\max})=:\bar{\rho}
	\end{align*}
	for the velocity and density with $\ubar\rho>0$ for $S_{\max}$ sufficiently small.
	Thus, the bound $|S_\pm|\le S_{\max}$ on the Riemann invariants corresponds to bounds for $\hat\rho,\, \hat v$ that are of the same form as the bounds that will be used in the convergence proof, cf.  assumptions (A1) and (A3) in section \ref{sec:exp_synchronization}.
\end{remark}

\begin{remark}
		Note that Theorem \ref{thm:localExistence} has two main statements.
		First, in the case $\mu=0$, the theorem asserts the existence of 
		Lipschitz-continuous solutions of the original system for the coupling conditions \eqref{eq:couplingc} and for boundary conditions of the form \eqref{eq:BC_existence}. This was not known before, since \cite{GugatUlbrich2018} uses other coupling and boundary conditions.
		Second, given a solution of the original system, Theorem \ref{thm:localExistence} applied to the case $\mu>0$ states the existence of a solution of the observer system, which depends on the given original solution.
\end{remark}

\begin{proof}[Proof of Theorem \ref{thm:localExistence}]
	The main idea of the proof is to write \eqref{eq:obs_RI_v-measurement} as a suitable fixed-point mapping and then apply the Banach fixed-point theorem in order to show existence of a unique fixed-point in $M(S_{\max}, L_R)$, where we use on $M(S_{\max}, L_R)$ the norm
	\begin{align*}
		\|\mathcal{S}_e\|_M
		:=\max_{(t,x)\in[0,T]\times[0,\ell^e]} |S_{+,e}(t,x) | + |S_{-,e}(t,x) |.
	\end{align*}
	The strategy of the proof is similar to the strategy of the proof of Theorem 5.1 in \cite{GugatUlbrich2018}, but we use a modification of the fixed-point iteration that is used there in order to deal with the observer terms.
	Note that for $\mu>0$ we assume that the original system has a unique solution $\mathcal R =((R_{+,e}, R_{-,e})) _{e\in\E}$
	and we want to find a solution $\mathcal S$ of the observer system in dependence of this given solution $\mathcal R$,
	i.e., for $\mu>0$ the fixed-point mapping for $\mathcal S$ depends on the original solution $\mathcal R$.
	In order to define the fixed-point iteration, we denote $\mathcal{S}^i=
	((S^i_{+,e}, S^i_{-,e})) _{e\in\E}$ and define the mapping 
	\begin{align*}
		\Phi: \, M(S_{\max}, L_R)\to  M(S_{\max}, L_R),
		\, \mathcal{S}^i\mapsto \mathcal{S}^{i+1}, 
	\end{align*}
	where $\mathcal{S}^{i+1}=((S_{+,e}^{i+1}, S_{-,e}^{i-1})) _{e\in\E}$ is the solution of the differential equations 
	\begin{equation} \label{eq:FP1_S+}
	\begin{split}
	&\partial_s S_{+,e}^{i+1} (s, \xi_{+,e}^{\mathcal{S}^i}(s,x,t))
	+\tfrac{\mu}{2} S_{+,e}^{i+1}(s, \xi_{+,e}^{\mathcal{S}^i}(s,x,t))\\
	&=\tfrac{\mu}{2} \left( R_{+,e}-R_{-,e} +S_{-,e}^i\right)(s, \xi_{+,e}^{\mathcal{S}^i}(s,x,t))
	- \sigma  (\mathcal{S}^i) (s, \xi_{+,e}^{\mathcal{S}^i}(s,x,t)),\quad e\in\E,
	\end{split}
	\end{equation}
	along the $\xi_+$-characteristics and
	\begin{equation} \label{eq:FP1_S-}
	\begin{split}
	&\partial_s S_{-,e}^{i+1} (s, \xi_{-,e}^{\mathcal{S}^i}(s,x,t))
	+\tfrac{\mu}{2} S_{-,e}^{i+1}(s, \xi_{-,e}^{\mathcal{S}^i}(s,x,t))\\
	&=- \tfrac{\mu}{2} \left( R_{+,e}-R_{-,e} -S_{+,e}^i\right)(s, \xi_{-,e}^{\mathcal{S}^i}(s,x,t))
	+ \sigma  (\mathcal{S}^i) (s, \xi_{-,e}^{\mathcal{S}^i}(s,x,t)),\quad e\in\E, 
	\end{split}
	\end{equation}
	along the $\xi_-$-characteristics complemented with the initial and boundary conditions \eqref{eq:obs_RI_IC}-\eqref{eq:obs_RI_BC} and the coupling conditions \eqref{eq:cc_RI_m}-\eqref{eq:cc_RI_h}.
	Note that on the left hand side there is not only the derivative term $\partial_s S_+^{i+1} $, but also some part of the observer term. 
	This has the advantage that we get some extra damping from this term, which will be relevant in the estimates that are needed to show that $\Phi$ is a contraction and a self-mapping.
	
	For fixed $x$ and $t$, equations \eqref{eq:FP1_S+} and \eqref{eq:FP1_S-} are ordinary differential equations in the variable $s$ that can be solved by the "variation of constants"- formula. For the equation \eqref{eq:FP1_S+} for $S_+$ this yields
	\begin{equation} 
	\begin{split}\label{eq:FP2_S+}
	&S_{+,e}^{i+1} (s, \xi_+^{\mathcal{S}^i_e}(s,x,t))
	= 		S_{+,e}^{i+1}\xipe{t_+^{\mathcal{S}^i_e}(x,t)} \e^{-\tfrac{\mu}{2}(s-t_+^{\mathcal{ S}^i_e}(x,t))}	\\
	&\quad +\int_{t_+^{\mathcal{S}^i_e}(x,t)}^s \e^{-\tfrac{\mu}{2}(s-r)}
	\Big( \tfrac{\mu}{2} ( R_{+,e}-R_{-,e} +S_{-,e}^{i})(r, \xi_+^{\mathcal{S}^i_e}(r,x,t)) 
	- \sigma  (\mathcal{S}^i_e) (r, \xi_+^{\mathcal{S}^i_e}(r,x,t))  \Big)dr
	\end{split}
	\end{equation}
	on every pipe $e\in\E$
	for $s\ge t_+^{\mathcal{S}^i_e}(x,t)$, where $t_+^{\mathcal{S}^i_e}(x,t)$ is the time when the characteristic $\xi_+^{\mathcal{S}^i_e}(s,x,t)$ hits the boundary of the interval $[0,\ell^e]$ or the initial time, i.e.,  $t_+^{\mathcal{S}^i_e}(x,t)=0$, if
	$\xi_+^{\mathcal{S}^i_e}(s,x,t)>0$ for all $s\in [0,T]$, and $t_+^{\mathcal{S}^i_e}(x,t)$ is defined by $\xi_+^{\mathcal{S}^i_e}(t_+^{\mathcal{S}^i_e}(x,t),x,t)=0$ otherwise.
	In particular, $S_{+,e}^{i+1}\xip{t_+^{\mathcal{S}^i_e}(x,t)}$ is known from the initial data,  the boundary conditions
	or from the coupling conditions.
	This means that
	the fixed-point iteration can be written as
	\begin{equation*} 
	\begin{split}
	&S_{+,e}^{i+1} (s, \xi_+^{\mathcal{S}^i_e}(s,x,t))
	\\
	&=\int_{t_+^{\mathcal{S}^i_e}(x,t)}^s \e^{-\tfrac{\mu}{2}(s-r)}
	\left( \tfrac{\mu}{2} ( R_{+,e}-R_{-,e} +S_{-,e}^i)(r, \xi_+^{\mathcal{S}^i_e}(r,x,t)) 
	- \sigma  (\mathcal{S}^i_e) (r, \xi_+^{\mathcal{S}^i_e}(r,x,t))  \right)dr\\
	& \quad	+
	\begin{cases}
	y_+^e(\xipoe(0,x,t)) \e^{-\tfrac{\mu}{2}s}& \text{ if } t_+^{\mathcal{S}^i_e}(x,t)=0,\\
	S_{+,e}^{i+1}(t_+^{\mathcal{S}^i_e}(x,t),\nu) \e^{-\tfrac{\mu}{2}(s-t_+^{\mathcal{ S}^i_e}(x,t))}
	& \text{ if } \xipoe(t_+^{\mathcal{S}^i_e}(x,t),x,t)=\nu \text{ is an inner node}\\
	 & \text{ or a boundary node}
	\end{cases}	
	\end{split}
	\end{equation*}
	on each edge $e\in\E$.
	Now, if we choose $S_{\max}$ sufficiently small such that \eqref{eq:bound_lambda} holds, from the bound $T\le\min_{e\in\E}\tfrac{\ell^e}{\Lambda(S_{\max})}$ we deduce that 
	the value of $S_{+,e}^{i+1}(t_+^{\mathcal{S}^i_e}(x,t),\nu)$ in the last case can be followed back to the initial state.
	If $\nu$ is a boundary node and we use the boundary conditions of Case (i), 
	then $S_{+,e}^{i+1}(t_+^{\mathcal{S}^i_e}(x,t)=u_{+}^e(t_+^{\mathcal{S}^i_e}(x,t))$.
	If $\nu$ is an inner node or $\nu$ is a boundary node and we use the boundary conditions of Case (ii),
	then for $|S_-|\le S_{\max}$ with $S_{\max}>0$ sufficiently small by Lemma~\ref{lem:coupling_conditions} and Lemma~\ref{lem:boundary_cond} there exist functions $f_e$ and $g$, respectively, such that
	\begin{align*}
		S_{+,e}^{i+1}(t_+^{\mathcal{S}^i_e}(x,t),\nu) 
		=f_e(S_{-}^{i+1}(t_+^{\mathcal{S}^i_e}(x,t),\nu) )
	\end{align*}
	in the case that $\nu$ is an inner node and
	\begin{align*}
		S_{+,e}^{i+1}(t_+^{\mathcal{S}^i_e}(x,t),\nu) 
		=g(S_{-, e}^{i+1}(t_+^{\mathcal{S}^i_e}(x,t),\nu) )
	\end{align*}
	in the case that $\nu$ is a boundary node.
	The components $S_{-,f}^{i+1}$, $f\in\E(\nu)$, of $S_{-}^{i+1}$ are given by
	\begin{equation} 
	\begin{split} \label{eq:S-}
	&S_{-,f}^{i+1}(t_+^{\mathcal{S}^i_e}(x,t),\nu) \\
	&=\int_{0}^{t_+^{\mathcal{S}^i_e}(x,t)}
	\e^{-\tfrac{\mu}{2}(t_+^{\mathcal{S}^i_e}(x,t)-r)}
	\Big( -\tfrac{\mu}{2} ( R_{+,f}-R_{-,f} -S_{+,f}^i)(r, \xi_-^{\mathcal{S}^i_f}(r,\nu,t_+^{\mathcal{S}^i_e}(x,t))) \\
	&\qquad +\sigma  (\mathcal{S}^i_f) (r, \xi_-^{\mathcal{S}^i_f}(r,\nu,t_+^{\mathcal{S}^i_e}(x,t)))  \Big)dr	
	 + S_{-,f}^{i+1}(0,\xi_-^{\mathcal{S}^i_f}(0,\nu,t_+^{\mathcal{S}^i_e}(x,t)))
	\e^{-\tfrac{\mu}{2}t_+^{\mathcal{ S}^i_e}(x,t)} 
	\end{split}
	\end{equation}
	with $S_{-,f}^{i+1}(0,\xi_-^{\mathcal{S}^i_f}(0,\nu,t_+^{\mathcal{S}^i_e}(x,t)))$ given by the initial data.
	For ease of presentation we assume here that all pipes that are incident to the node $\nu$ are oriented such that they start in $\nu$.
		
	In the first part of the proof we show that the fixed-point mapping $\Phi$
	is well-defined, i.e., $\Phi(\mathcal{S}) \in M(S_{\max}, L_R)$ for $\mathcal{S} \in M(S_{\max}, L_R)$.
	Before we start, let us note that 
	\begin{align*}
	\sigma(S_+, S_-)=\tfrac{\gamma}{c}|\hat v| \hat v = \gamma \tfrac{c}{4} |S_+-S_-| (S_+-S_-)
	\le \gamma c(S_{\max})^2 =: \sigma_{\max}
	\end{align*}
	and $\sigma(S_+, S_-)$ is Lipschitz-continuous with Lipschitz-constant $L_\sigma$, i.e.,
	\begin{align} \label{eq:def_Lsigma}
	|\sigma(R_+, R_-)-\sigma(\tilde R_+, \tilde R_-)|
	\le L_\sigma \left( |R_+-\tilde R_+| +   |R_--\tilde R_-|  \right)
	\end{align}
	with $L_\sigma \le \gamma\, c\, S_{\max}$ for $\mathcal{S}\in M(S_{\max}, L_R)$.
	In the following, let us choose $S_{\max}$ sufficiently small such that $\hat v (S_+, S_-)=\tfrac{c}{2}(S_+-S_-) \le \tfrac{1}{2} \sqrt{\ubar C_{p'}}$, the bounds \eqref{eq:bound_lambda} on $\lambda_\pm$ are satisfied 
	and the bounds on $S_{\max}$ that are required in Lemma \ref{lem:coupling_conditions} and Lemma \ref{lem:boundary_cond} are satisfied.
	
	Let $\mathcal{S}^i \in M(S_{\max}, L_R)$. Then 
	\begin{align*}
		&|S_{+,e}^{i+1} (s, \xi_+^{\mathcal{S}^i_e}(s,x,t))|
		\le |		S_{+,e}^{i+1}\xipe{t_+^{\mathcal{S}^i_e}(x,t)} \e^{-\tfrac{\mu}{2}(s-t_+^{\mathcal{ S}^i}(x,t))}|	\\
		&\quad +\int_{t_+^{\mathcal{S}^i_e}(x,t)}^s \e^{-\tfrac{\mu}{2}(s-r)}
		\Big( \tfrac{\mu}{2} | R_{+,e}-R_{-,e} +S_{-,e}^i|(r, \xi_+^{\mathcal{S}^i_e}(r,x,t)) 
		+| \sigma  (\mathcal{S}^i_e)| (r, \xi_+^{\mathcal{S}^i_e}(r,x,t))  \Big)dr\\
		&\le |S_{+,e}^{i+1}\xipe{t_+^{\mathcal{S}^i_e}(x,t)}| + \int_{t_+^{\mathcal{S}^i_e}(x,t)}^s \e^{-\tfrac{\mu}{2}(s-r)}
		\left( \tfrac{\mu}{2} 3 S_{\max} 
		+\sigma_{\max}  \right)dr\\
		& =  |S_{+,e}^{i+1}\xipe{t_+^{\mathcal{S}^i_e}(x,t)}| + 
		\left(1-\e^{-\tfrac{\mu}{2} (s-t_+^{\mathcal{S}^i_e}(x,t))  }\right)
		3 S_{\max} + T \sigma_{\max}  \\
		&\le |S_{+,e}^{i+1}\xipe{t_+^{\mathcal{S}^i_e}(x,t)}|+ \tfrac{1}{12} \tfrac{1}{4+C(n)}
		  3 S_{\max} + T\sigma_{\max} 
	\end{align*}
	for all $(x,t)\in[0,\ell]\times[0,T]$, $t_+^{\mathcal{S}^i_e}(x,t)\le s\le T$,
	since $0\le s-t_+^{\mathcal{S}^i_e}(x,t) \le T$ and $1-\e^{-\tfrac{\mu}{2} T}\le \tfrac{1}{12} \tfrac{1}{4+C(n)}$.
	If $S_{+,e}^{i+1}\xipe{t_+^{\mathcal{S}^i_e}(x,t)}$ is given by the initial data or the boundary conditions of Case~(i), 
	then
	$|S_{+,e}^{i+1}\xipe{t_+^{\mathcal{S}^i_e}(x,t)}| \le B_{\max}$.
	If $\xi_+^{\mathcal{ S}^i_e}(t_+^{\mathcal{S}^i_e}(x,t),x,t)=\nu$ is an inner node or $\nu$ is a boundary node and we use the boundary conditions of Case (ii), 
	then $	S_{+,e}^{i+1}(t_+^{\mathcal{S}^i_e}(x,t),\nu) 
	=f_e(S_{-}^{i+1}(t_+^{\mathcal{S}^i_e}(x,t),\nu) )$ (or $	S_{+,e}^{i+1}(t_+^{\mathcal{S}^i_e}(x,t),\nu) 
	=g(S_{-,e}^{i+1}(t_+^{\mathcal{S}^i_e}(x,t),\nu) )$) with 
	\begin{align}
		|S_{-,f}^{i+1}(t_+^{\mathcal{S}^i_e}(x,t),\nu) |
		&\le |S_{-,f}^{i+1}(0,\xi_+^{\mathcal{S}^i_f}(0,\nu,t_+^{\mathcal{S}^i_e}(x,t)))  \e^{-\tfrac{\mu}{2}t_+^{\mathcal{ S}^i_e}(x,t)}|\notag\\
		&\quad	+\tfrac{3}{2} \mu S_{\max}
		\int_0^{t_+^{\mathcal{S}^i_e}(x,t)}
		\e^{-\tfrac{\mu}{2}(t_+^{\mathcal{S}^i_e}(x,t)-r)}dr +T \sigma_{\max} \label{eq:S-_self-mapping}\\
		&\le B_{\max} + 3 S_{\max}
		\tfrac{1}{12} \tfrac{1}{4+C(n)}+T \sigma_{\max} \qquad \forall f\in\E(\nu). \notag
	\end{align}
	For the case of coupling conditions, Lemma~\ref{lem:coupling_conditions} implies		
	\begin{align*}
		 |S_{+,e}^{i+1}\xipe{t_+^{\mathcal{S}^i_e}(x,t)}|
		 &\le C(n)\max_{f\in\E(\nu)} |S_{-,f}^{i+1}(t_+^{\mathcal{S}^i_e}(x,t),\nu) |\\
		 &\le  C(n) \left(B_{\max} + \tfrac{1}{4} S_{\max} \tfrac{1}{4+C(n)}+T \sigma_{\max} \right).
	\end{align*}
	For the case of boundary conditions of Case (ii), 
	Lemma ~\ref{lem:boundary_cond} implies
		\begin{align*}
		|S_{+,e}^{i+1}\xipe{t_+^{\mathcal{S}^i_e}(x,t)}|
		&\le \tfrac{1}{4} S_{\max} + 3 |S_{-,f}^{i+1}(t_+^{\mathcal{S}^i_e}(x,t),\nu) |\\
		&\le \tfrac{1}{4} S_{\max} + 3 \left(B_{\max} + \tfrac{1}{4} S_{\max} \tfrac{1}{4+C(n)}+T \sigma_{\max} \right).
	\end{align*}	
	Therefore
	\begin{align*}
		&|S_{+,e}^{i+1} (s, \xi_+^{\mathcal{S}^i_e}(s,x,t))|\\
		&\le \tfrac{1}{4} S_{\max} \tfrac{1}{4+C(n)}+T \sigma_{\max}
			+\max\big \{B_{\max}, C(n)\left( B_{\max} + \tfrac{1}{4} S_{\max}  \tfrac{1}{4+C(n)} + T \sigma_{\max} \right),\\
			&\qquad  \tfrac{1}{4} S_{\max} + 3 \left(B_{\max} + \tfrac{1}{4} S_{\max} \tfrac{1}{4+C(n)}+T \sigma_{\max} \right) \big\}\\
		&\le (4+C(n))  B_{\max}  + \tfrac{1}{2} S_{\max}+  (4+C(n)) T\sigma_{\max}.
	\end{align*}
	
	Now, if $B_{\max}\le \tfrac{1}{4} \tfrac{1}{4+C(n)} S_{\max}$ and
	$S_{\max}$ is sufficiently small such that
	$(4+C(n)) T\sigma_{\max}\le \tfrac{1}{4} S_{\max}  $,
	then
	\begin{align*}
		|S_{+,e}^{i+1} (s, \xi_+^{\mathcal{S}^i_e}(s,x,t))|\le S_{\max}.
	\end{align*}
	Analogously, we can show $|S_{-,e}^{i+1} | \le S_{\max}$. Thus,
	$|\Phi(\mathcal{S}^ i)|\le S_{\max}$ for $\mathcal{S}^ i \in M(S_{\max}, L_R)$, if $S_{\max}$ and $B_{\max}$ are sufficiently small.

	Next, we want to show that the Lipschitz-constant of $\mathcal{S}^i$ with respect to $x$ is uniformly bounded in $i$,
	i.e, if $\mathcal{S}^i$ has the Lipschitz-constant $L_R$ with respect to $x$, then we have to show that $\mathcal{S}^{i+1}$
	has the same Lipschitz-constant with respect to $x$.
	From \eqref{eq:FP2_S+} we know that
	\begin{equation*} 
		\begin{split}
		&S_{+,e}^{i+1} (t, x) =S_{+,e}^{i+1}\xip{t_+^{\mathcal{S}^i_e}(x,t)}  \e^{-\tfrac{\mu}{2}(t-t_+^{\mathcal{ S}^i_e}(x,t))}\\
		&\quad+\int_{t_+^{\mathcal{S}^i_e}(x,t)}^t \e^{-\tfrac{\mu}{2}(t-r)}
		\left( \tfrac{\mu}{2} ( R_{+,e}-R_{-,e} +S_{-,e}^i)(r, \xi_+^{\mathcal{S}^i_e}(r,x,t)) 
		- \sigma  (\mathcal{S}^i_e) (r, \xi_+^{\mathcal{S}^i_e}(r,x,t))  \right)dr\\
		&=:f_{1,e}(t,x)+f_{2,e}(t,x)
		\end{split}
	\end{equation*}
	for all $e\in\E$.
	Let $x_1, x_2\in [0,\ell^ e]$ with
	$t_+^{\mathcal{S}^i_e}(x_1,t)\le t_+^{\mathcal{S}^i_e}(x_2,t)$.
	Then 
	\begin{align*}
		|S_{+,e}^{i+1} (t, x_1) - S_{+,e}^{i+1} (t,x_2)|
		\le |f_{1,e}(t, x_1) - f_{1,e}(t,x_2) | + |f_{2,e}(t, x_1) - f_{2,e}(t,x_2) |
	\end{align*}
	with
	\begin{align*}
		&|f_{2,e}(t, x_1) - f_{2,e}(t,x_2) |\\
		&\le \int_{t_+^{\mathcal{S}^i_e}(x_2,t)}^t \e^{-\tfrac{\mu}{2}(t-r)}
		\Big| \tfrac{\mu}{2} ( R_{+,e}-R_{-,e} +S_{-,e}^i)(r, \xi_+^{\mathcal{S}^i_e}(r,x_1,t)) \\
		&\qquad-\tfrac{\mu}{2} ( R_{+,e}-R_{-,e} +S_{-,e}^i)(r, \xi_+^{\mathcal{S}^i_e}(r,x_2,t)) \\
		&\qquad- (\sigma  (\mathcal{S}^i_e) (r, \xi_+^{\mathcal{S}^i_e}(r,x_1,t)) 
		- \sigma  (\mathcal{S}^i_e) (r, \xi_+^{\mathcal{S}^i_e}(r,x_2,t))  )  \Big|dr\\
		&\quad + \int_{t_+^{\mathcal{S}^i_e}(x_1,t)}^{t_+^{\mathcal{S}^i_e}(x_2,t)} \e^{-\tfrac{\mu}{2}(t-r)}
		\Big| \tfrac{\mu}{2} ( R_{+,e}-R_{-,e} +S_{-,e}^i)(r, \xi_+^{\mathcal{S}^i_e}(r,x_1,t)) 
			- \sigma  (\mathcal{S}^i_e) (r, \xi_+^{\mathcal{S}^i_e}(r,x_1,t))  \Big|dr\\
		&\le \int_{t_+^{\mathcal{S}^i_e}(x_2,t)}^t \e^{-\tfrac{\mu}{2}(t-r)}
		\big(\tfrac{\mu}{2} 3 L_R 
		 + L_\sigma 2 L_R\big) |\xi_+^{\mathcal{S}^i_e}(r,x_1,t) -\xi_+^{\mathcal{S}^i_e}(r,x_2,t)|  dr\\
		&\quad + \int_{t_+^{\mathcal{S}^i_e}(x_1,t)}^{t_+^{\mathcal{S}^i_e}(x_2,t)} \e^{-\tfrac{\mu}{2}(t-r)}
		\big( \tfrac{\mu}{2} 3 S_{\max} +\sigma_{\max} \big ) dr,
	\end{align*}
	where $L_\sigma$ is the Lipschitz-constant of the friction term $\sigma$ with respect to the Riemann invariants, see \eqref{eq:def_Lsigma}. 
	Now, we want to estimate $|\xi_+^{\mathcal{S}^i_e}(r,x_1,t) -\xi_+^{\mathcal{S}^i_e}(r,x_2,t)|$.
	Analogously to Lemma 5.1 in \cite{GugatUlbrich2018} we can show that that the characteristics $\xi_\pm$ are Lipschitz-continuous with respect to $x$ with Lipschitz-constant $L_x= \exp(2 L_R T L_\lambda)$, i.e.,
	\begin{align*}
		|\xi_+^{\mathcal{S}^i_e}(r,x_1,t) -\xi_+^{\mathcal{S}^i_e}(r,x_2,t)|
		\le L_x |x_1-x_2|.
	\end{align*}
	In addition, as in \cite[Lemma 5.1]{GugatUlbrich2018} we can show  that $t_+^{\mathcal{S}}(x,t)$ is Lipschitz-continuous with respect to $x$ with the Lipschitz-constant 
	\begin{align*}
		L_t= \tfrac{1}{\ubar{\Lambda}(S_{\max})}\exp(2 L_R T L_\lambda)=\tfrac{1}{\ubar{\Lambda}(S_{\max})} L_x.
	\end{align*}
	This implies
	\begin{align*}
		|f_{2,e}(t, x_1) - f_{2,e}(t,x_2) |
		&\le \left( \tfrac{1}{12}   3 L_R + 2 T L_\sigma L_R    \right) L_x |x_1-x_2|
		+ L_t |x_1-x_2| \big( \tfrac{\mu}{2} 3 S_{\max} +\sigma_{\max} \big )\\
		&\le \left( \tfrac{1}{4}L_R  L_x + 2 T L_\sigma L_R  L_x  +L_t  \tfrac{\mu}{2} 3 S_{\max}  +L_t \sigma_{\max}\right)  |x_1-x_2|\\
		& =: L_2  |x_1-x_2|.
	\end{align*}
	Thus, if we choose $S_{\max}$ and $L_R$ sufficiently small such that $L_x\le \tfrac{3}{2}$, $3 T L_\sigma \le \tfrac{1}{24}  $, 
	$L_t  \tfrac{\mu}{2} 3 S_{\max}\le \tfrac{1}{24} L_R$ and 
	$L_t \sigma_{\max} \le \tfrac{1}{24} L_R$, 
	then
	\begin{align}\label{eq:Lipschitz-const_f1}
	L_2\le \tfrac{1}{2} L_R.
	\end{align}

	Now, we will estimate the Lipschitz-constant of  
	\begin{align*}
		f_{1,e}(t,x) = S_{+,e}^{i+1}\xipe{t_+^{\mathcal{S}^i_e}(x,t)}	\e^{-\tfrac{\mu}{2}(t-t_+^{\mathcal{ S}^i_e}(x,t))}.
	\end{align*}
	Here, we distinguish between three cases.
	
	\textbf{Case 1:} $t_+^{\mathcal{S}^i_e}(x_1,t)= 0 = t_+^{\mathcal{S}^i_e}(x_2,t)$.\newline
	Then $f_{1,e}(t,x_1)$ and $f_{1,e}(t,x_2)$ are given by the initial condition, i.e.,
	\begin{align*}
		|f_{1,e}(t,x_1)-f_{1,e}(t,x_2)|
		&= |S_{+,e}^{i+1}(0, \xi_+^{\mathcal{S}^i_e}(0,x_1,t))\e^{-\tfrac{\mu}{2}t}
		-S_{+,e}^{i+1}(0, \xi_+^{\mathcal{S}^i_e}(0,x_2,t))\e^{-\tfrac{\mu}{2}t} |\\
		&\le |\e^{-\tfrac{\mu}{2}t} |\, |y_+^e(\xi_+^{\mathcal{S}^i_e}(0,x_1,t)) - y_+^e(\xi_+^{\mathcal{S}^i_e}(0,x_2,t)) |\\
		&\le L_I |\xi_+^{\mathcal{S}^i_e}(0,x_1,t) - \xi_+^{\mathcal{S}^i_e}(0,x_2,t)|\\
		&\le L_I L_x |x_1-x_2|
	\end{align*}
	with $L_I L_x\le \tfrac{1}{2} L_R$ for $L_I$ sufficiently small.

	\textbf{Case 2:} $t_+^{\mathcal{S}^i_e}(x_1,t)>0$ and $  t_+^{\mathcal{S}^i_e}(x_2,t)>0$. \newline
	Then $\xi_+^{\mathcal{S}^i_e}(t_+^{\mathcal{S}^i_e}(x_1,t),x_1,t)=\xi_+^{\mathcal{S}^i_e}(t_+^{\mathcal{S}^i_e}(x_2,t),x_2,t)$ is either a boundary node or an inner node of the network, i.e.,
	$f_{1,e}(t,x_1)$ and $f_{1,e}(t,x_2)$ are given by the boundary conditions or the coupling conditions.
	
	\textbf{Case 2a:}
	If $f_{1,e}(t,x_1)$ and $f_{1,e}(t,x_2)$ are given by the boundary conditions of Case (i), 
	we have
	\begin{align*}
		&|f_{1,e}(t,x_1)-f_{1,e}(t,x_2)|\\
		&= |S_{+,e}^{i+1}(t_+^{\mathcal{S}^i_e}(x_1,t), 0)\e^{-\tfrac{\mu}{2}(t-t_+^{\mathcal{ S}^i_e}(x_1,t))} -S_{+,e}^{i+1}(t_+^{\mathcal{S}^i_e}(x_2,t), 0) \e^{-\tfrac{\mu}{2}(t-t_+^{\mathcal{ S}^i_e}(x_2,t))}|\\
		&\le |u_+^e(t_+^{\mathcal{S}^i_e}(x_1,t)) - u_+^e(t_+^{\mathcal{S}^i_e}(x_2,t)) | \,
		|\e^{-\tfrac{\mu}{2}(t-t_+^{\mathcal{ S}^i_e}(x_1,t))} |\\
		&\quad + |S_{+,e}^{i+1}(t_+^{\mathcal{S}^i_e}(x_2,t), 0)|\, |\e^{-\tfrac{\mu}{2}(t-t_+^{\mathcal{ S}^i_e}(x_1,t))}-\e^{-\tfrac{\mu}{2}(t-t_+^{\mathcal{ S}^i_e}(x_2,t))}|\\
		&\le L_I |t_+^{\mathcal{S}^i_e}(x_1,t)  - t_+^{\mathcal{S}^i_e}(x_2,t)|
		+B_{\max} \tfrac{\mu}{2} |t_+^{\mathcal{S}^i_e}(x_1,t)  - t_+^{\mathcal{S}^i_e}(x_2,t)| \\
		&\le(L_I  L_t +B_{\max} \tfrac{\mu}{2} L_t)|x_1-x_2|.
	\end{align*}
	
	\textbf{Case 2b:}
	If $\xi_+^{\mathcal{S}^i_e}(t_+^{\mathcal{S}^i_e}(x_1,t),x_1,t)=\xi_+^{\mathcal{S}^i_e}(t_+^{\mathcal{S}^i_e}(x_2,t),x_2,t)$ is an inner node $\nu\in\V\setminus\V_\partial$, then we can estimate
	\begin{align*}
		&|f_{1,e}(t,x_1)-f_{1,e}(t,x_2)|\\
		&=|S_{+,e}^{i+1}(t_+^{\mathcal{S}^i_e}(x_1,t), \nu)  \e^{-\tfrac{\mu}{2}(t-t_+^{\mathcal{ S}^i_e}(x_1,t))}
		-S_{+,e}^{i+1}(t_+^{\mathcal{S}^i_e}(x_2,t), \nu) \e^{-\tfrac{\mu}{2}(t-t_+^{\mathcal{ S}^i_e}(x_2,t))}|\\
		&\le |S_{+,e}^{i+1}(t_+^{\mathcal{S}^i_e}(x_1,t), \nu) |\, |\e^{-\tfrac{\mu}{2}(t-t_+^{\mathcal{ S}^i_e}(x_1,t))} -\e^{-\tfrac{\mu}{2}(t-t_+^{\mathcal{ S}^i_e}(x_2,t))}|\\
		&\quad + |S_{+,e}^{i+1}(t_+^{\mathcal{S}^i_e}(x_1,t), \nu) -S_{+,e}^{i+1}(t_+^{\mathcal{S}^i_e}(x_2,t), \nu)|\, |\e^{-\tfrac{\mu}{2}(t-t_+^{\mathcal{ S}^i_e}(x_2,t))}|\\
		&\le |S_{+,e}^{i+1}(t_+^{\mathcal{S}^i_e}(x_1,t), \nu) | \tfrac{\mu}{2}L_t |x_1-x_2|
		 + C(n) |S_-^{i+1}(\tx{x_1}, \nu) -S_-^{i+1}(\tx{x_2}, \nu)|_\infty
	\end{align*}
	with
	\begin{align*}
		|S_{+,e}^{i+1}(t_+^{\mathcal{S}^i_e}(x_1,t), \nu) |
		&\le C(n) |S_{-}^{i+1}(t_+^{\mathcal{S}^i_e}(x_1,t), \nu) |_\infty\\
		&\le C(n) \left( B_{\max}+3 S_{\max}\tfrac{1}{12}\tfrac{1}{4+C(n)}+T\sigma_{\max} \right)\\
		&\le C(n) B_{\max} +\tfrac{1}{4} S_{\max} + C(n) T \sigma_{\max} 
	\end{align*}
	and
	\begin{align*}
		&|S_{-,f}^{i+1}(\tx{x_1}, \nu) -S_{-,f}^{i+1}(\tx{x_2}, \nu)|\\
		&\le \big| S_{-,f}^{i+1}(0, \xi_-^{\mathcal{ S}_f^i}(0, \nu, \tx{x_1}))\e^{-\tfrac{\mu}{2}t_+^{\mathcal{ S}^i_e}(x_1,t)}  -
		S_{-,f}^{i+1}(0, \xi_-^{\mathcal{ S}_f^i}(0, \nu, \tx{x_2})) \e^{-\tfrac{\mu}{2}t_+^{\mathcal{ S}^i_e}(x_2,t)} \big|\\
		&\quad + \Big |\int_0^{\tx{x_1} }\e^{-\tfrac{\mu}{2}(\tx{x_1}-r)}  
		\Big( -\tfrac{\mu}{2} ( R_{+,f}-R_{-,f} -S_{+,f}^i)(r, \xi_-^{\mathcal{S}^i_f}(r,\nu,\tx{x_1})) \\
		&\hspace{5cm} +\sigma  (\mathcal{S}^i_e) (r, \xi_-^{\mathcal{S}^i_f}(r,\nu,\tx{x_1}))  \Big)dr\\
		&\qquad -\int_0^{\tx{x_2} }\e^{-\tfrac{\mu}{2}(\tx{x_2}-r)}  
		\Big( -\tfrac{\mu}{2} ( R_{+,f}-R_{-,f} -S_{+,f}^i)(r, \xi_-^{\mathcal{S}^i_f}(r,\nu,\tx{x_2})) \\
		&\hspace{5cm} +\sigma  (\mathcal{S}^i_e) (r, \xi_-^{\mathcal{S}^i_f}(r,\nu,\tx{x_2}))  \Big)dr  \Big |\\
		&\le B_{\max}\tfrac{\mu}{2} L_t |x_1-x_2| +
		L_I |\xi_-^{\mathcal{ S}_f^i}(0, \nu, \tx{x_1})- \xi_-^{\mathcal{ S}_f^i}(0, \nu, \tx{x_2})|\\
		& \quad + \big( \tfrac{1}{12}\tfrac{1}{4+C(n)}\tfrac{4}{9}\tfrac{\ubar\Lambda(S_{\max})}{\Lambda(S_{\max})}  3 L_R +2 T L_\sigma L_R  \big)\\
		&\qquad\max_{r\in[0, \tx{x_1}]}  | \xi_-^{\mathcal{S}^i_f}(r,\nu,\tx{x_1}) -\xi_-^{\mathcal{S}^i_f}(r,\nu,\tx{x_2})|\\
		&\quad +(\tfrac{3}{2} \mu S_{\max}+\sigma_{\max}) T \tfrac{\mu}{2} L_t |x_1-x_2|   
		 + L_t |x_1-x_2|  (\tfrac{3}{2} \mu S_{\max}+\sigma_{\max}).
	\end{align*}
	Using that for all $r\in[0,t_+^{\mathcal{ S}^i_e}(x_1,t)]$
	\begin{align*}
		& | \xi_-^{\mathcal{S}^i_f}(r,\nu,\tx{x_1}) -\xi_-^{\mathcal{S}^i_f}(r,\nu,\tx{x_2})|\\
		& = | \xi_-^{\mathcal{S}^i_f}(r,\nu,\tx{x_1}) -\xi_-^{\mathcal{S}^i_f}(r,  \xi_-^{\mathcal{S}^i_f}(\tx{x_1},\nu,\tx{x_2})
		,\tx{x_1})|\\
		&\le L_x |\nu -\xi_-^{\mathcal{S}^i_f}(\tx{x_1},\nu,\tx{x_2})|
		\le L_x L_s L_t |x_1-x_2|,
	\end{align*}
	where $L_s\le \Lambda(S_{\max})$ is the Lipschitz-constant of $\xi_\pm$ with respect to $s$,
	this implies 
	\begin{align*}
		&|f_{1,e}(t,x_1)-f_{1,e}(t,x_2)|\\
		&\le \Big( (C(n)B_{\max} +\tfrac{1}{4} S_{\max}+C(n) T\sigma_{\max}) \tfrac{\mu}{2}\\
		&\quad+C(n)\big(B_{\max}\tfrac{\mu}{2}
		+L_I L_x L_s
		+\big (\tfrac{1}{12}\tfrac{1}{4+C(n)}\tfrac{4}{9}\tfrac{\ubar\Lambda(S_{\max})}{\Lambda(S_{\max})}
		  3 L_R +2 T L_\sigma L_R  \big)L_x L_s \\
		&\hspace{2cm} + (\tfrac{3}{2} \mu S_{\max}+\sigma_{\max}) (1+\tfrac{\mu}{2}T) \big) \Big) L_t  |x_1-x_2|\\
		&\le \big( C(n)B_{\max} \mu +\tfrac{\mu}{8} S_{\max}+C(n) T \tfrac{\mu}{2}\sigma_{\max} 
		+C(n)L_x L_s L_I \\
		&\quad+ C(n)  (\tfrac{3}{2} \mu S_{\max}+\sigma_{\max}) (1+\tfrac{\mu}{2}T)  \big) L_t |x_1-x_2|\\
		&\quad	+ L_t L_x L_s \big( \tfrac{4}{9}\tfrac{\ubar\Lambda(S_{\max})}{\Lambda(S_{\max})} \tfrac{1}{4}+C(n) 2 T L_\sigma\big) L_R |x_1-x_2|.
	\end{align*}
	Since the bounds from above imply that $L_t L_x L_s
	\le \tfrac{9}{4}\tfrac{\Lambda(S_{\max})}{\ubar\Lambda(S_{\max})}$ for $L_R$ sufficiently small, we can choose $B_{\max}$, $S_{\max}$, $L_I$ and  $L_R$ sufficiently small such that 
	\begin{align*}
		|f_{1,e}(t,x_1)-f_{1,e}(t,x_2)|\le \tfrac{1}{2} L_R |x_1-x_2|.
	\end{align*}

	\textbf{Case 2c:} 	If $\xi_+^{\mathcal{S}^i_e}(t_+^{\mathcal{S}^i_e}(x_1,t),x_1,t)=\xi_+^{\mathcal{S}^i_e}(t_+^{\mathcal{S}^i_e}(x_2,t),x_2,t)$ is a boundary node $\nu\in \V_\partial$
	and we use the boundary conditions of Case (ii), 
	then by Lemma \ref{lem:boundary_cond} we can estimate
	\begin{align*}
		&|f_{1,e}(t,x_1)-f_{1,e}(t,x_2)|\\
		&\le |S_{+,e}^{i+1}(t_+^{\mathcal{S}^i_e}(x_1,t), \nu) |\, |\e^{-\tfrac{\mu}{2}(t-t_+^{\mathcal{ S}^i_e}(x_1,t))} -\e^{-\tfrac{\mu}{2}(t-t_+^{\mathcal{ S}^i_e}(x_2,t))}|\\
		&\quad + |S_{+,e}^{i+1}(t_+^{\mathcal{S}^i_e}(x_1,t), \nu) -S_{+,e}^{i+1}(t_+^{\mathcal{S}^i_e}(x_2,t), \nu)|\, |\e^{-\tfrac{\mu}{2}(t-t_+^{\mathcal{ S}^i_e}(x_2,t))}|\\
		&\le \left(\tfrac{1}{4} S_{\max} + 3 |S_{-,e}^{i+1}(t_+^{\mathcal{S}^i_e}(x_1,t), \nu)  | \right)
		 \tfrac{\mu}{2} L_t |x_1-x_2| 
		 + |S_{+,e}^{i+1}(t_+^{\mathcal{S}^i_e}(x_1,t), \nu) -S_{+,e}^{i+1}(t_+^{\mathcal{S}^i_e}(x_2,t), \nu)|
	\end{align*}
	with 
	\begin{align*}
		|S_{-,e}^{i+1}(t_+^{\mathcal{S}^i_e}(x_1,t), \nu)  |
		\le B_{\max}+\tfrac{1}{4} S_{\max} \tfrac{1}{4+C(n)}+T\sigma_{\max}.
	\end{align*}
	The last term can be estimated by
	\begin{align*}
		&|S_{+,e}^{i+1}(t_+^{\mathcal{S}^i_e}(x_1,t), \nu) -S_{+,e}^{i+1}(t_+^{\mathcal{S}^i_e}(x_2,t), \nu)|\\
		&\le 3 |S_{-,e}^{i+1}(t_+^{\mathcal{S}^i_e}(x_1,t), \nu) -S_{-,e}^{i+1}(t_+^{\mathcal{S}^i_e}(x_2,t), \nu)|
		+ C_b L_I |t_+^{\mathcal{S}^i_e}(x_1,t) - t_+^{\mathcal{S}^i_e}(x_2,t)|,
	\end{align*}
	where the second term appears since $S_{+,e}^{i+1}(t_+^{\mathcal{S}^i_e}(x_1,t)$ and $S_{+,e}^{i+1}(t_+^{\mathcal{S}^i_e}(x_2,t)$
	depend on boundary values for $m_b$ or $h_b$ at the different times $t_+^{\mathcal{S}^i_e}(x_1,t)$ and $t_+^{\mathcal{S}^i_e}(x_2,t)$.
	The constant $C_b$ is the constant from estimate \eqref{eq:bound_BC_wrt_mb} in Lemma \ref{lem:boundary_cond}.
	Similar to Case 2b we have
	\begin{align*}
		&|S_{-,e}^{i+1}(t_+^{\mathcal{S}^i_e}(x_1,t), \nu) -S_{-,e}^{i+1}(t_+^{\mathcal{S}^i_e}(x_2,t), \nu)|\\
		&\le B_{\max}\tfrac{\mu}{2} L_t |x_1-x_2| +
		L_I L_x L_s L_t |x_1-x_2|\\
		& \quad + \big( \tfrac{1}{12}\tfrac{1}{4+C(n)}\tfrac{4}{9}\tfrac{\ubar\Lambda(S_{\max})}{\Lambda(S_{\max})}  3 L_R +2 T L_\sigma L_R  \big) L_x L_s L_t |x_1-x_2|\\
		&\quad +(\tfrac{3}{2} \mu S_{\max}+\sigma_{\max}) T \tfrac{\mu}{2} L_t |x_1-x_2|   
		+ L_t |x_1-x_2|  (\tfrac{3}{2} \mu S_{\max}+\sigma_{\max})\\
		&\le \Big( B_{\max}\tfrac{\mu}{2}  +
		L_I L_x L_s 
		+  \tfrac{1}{4} \tfrac{L_R}{L_t}
		+2 T L_\sigma L_R   L_x L_s \\
		&\quad +(\tfrac{3}{2} \mu S_{\max}+\sigma_{\max}) ( T \tfrac{\mu}{2}    +1) \Big )
		L_t |x_1-x_2|.
	\end{align*}
	In the last step we have again used that $L_t L_x L_s
	\le \tfrac{9}{4}\tfrac{\Lambda(S_{\max})}{\ubar\Lambda(S_{\max})}$ for $L_R$ sufficiently small.
	Summarizing yields
	\begin{align*}
		&|f_{1,e}(t,x_1)-f_{1,e}(t,x_2)|\\
		&\le \left(\tfrac{1}{4} S_{\max} + 3 \big(B_{\max}+\tfrac{1}{4} S_{\max} \tfrac{1}{4+C(n)}+T\sigma_{\max}  \big)\right)
		\tfrac{\mu}{2} L_t |x_1-x_2| \\
		&\quad 	+ 3 \Big( B_{\max}\tfrac{\mu}{2}  +
		L_I L_x L_s 
		+  \tfrac{1}{4} \tfrac{L_R}{L_t}
		+2 T L_\sigma L_R   L_x L_s \\
		&\qquad +(\tfrac{3}{2} \mu S_{\max}+\sigma_{\max}) ( T \tfrac{\mu}{2}    +1) \Big )
		L_t |x_1-x_2|
		+ C_b L_I L_t |x_1-x_2|.
	\end{align*}
	Again, if we choose $B_{\max}$, $S_{\max}$, $L_I$ and  $L_R$ sufficiently small, then
	\begin{align*}
		|f_{1,e}(t,x_1)-f_{1,e}(t,x_2)|\le \tfrac{1}{2} L_R |x_1-x_2|.
	\end{align*}

	\textbf{Case 3:} $t_+^{\mathcal{S}^i_e}(x_1,t)= 0 < t_+^{\mathcal{S}^i_e}(x_2,t)$.\newline
	Then $\xi_+^{\mathcal{S}^i_e}(t_+^{\mathcal{S}^i_e}(x_2,t),x_2,t)=0$ and $f_{1,e}(t,x_1)$ is given by the initial condition, while $f_{1,e}(t,x_2)$ is given by the boundary condition or the coupling conditions. This case can be treated similarly to Case 2.
	Therefore we do not present the details here. Note that in the case that $f_{1,e}(t,x_2)$ is given by the coupling conditions the compatibility of the initial data with the coupling conditions has to be used.

	Summing up the Cases 1--3 shows that the Lipschitz-constant of $f_{1,e}$ is bounded by $\tfrac{1}{2} L_R$, if $B_{\max}$, $S_{\max}$, $L_I$ and  $L_R$ are sufficiently small.
	Together with \eqref{eq:Lipschitz-const_f1} this shows that the Lipschitz-constant of $S_{+,e}^{i+1}$ is bounded by $L_R$ for every $e\in\E$.
	Analogously it can be shown that the Lipschitz-constant of $S_-^{i+1}$ is bounded by $L_R$. 
	In other words, we have shown that the Lipschitz-constant of $\mathcal{ S}$ with respect to $x$ remains bounded under the fixed-point mapping.
	Therefore $\Phi(\mathcal{S}) \in M(S_{\max}, L_R)$ for $\mathcal{S} \in M(S_{\max}, L_R)$. In particular, the fixed-point mapping is well-defined.
		
	In order to apply the Banach fixed-point theorem, it remains to show the contraction property.
	We will only show the estimation for $S_+$, the estimation for $S_-$ can be done similarly.
	Let $\mathcal{S}^i, \tilde{\mathcal{S}}^i\in M(S_{\max}, L_R)$ and denote  $\mathcal{S}^{i+1} :=\Phi(\mathcal{S}^i)$, $\tilde{\mathcal{S}}^{i+1} :=\Phi(\tilde{\mathcal{S}}^i)$. Let $e\in\E$ and assume without loss of generality $t_+^{\mathcal{\tilde S}^i_e}(x,t)\le t_+^{\mathcal{ S}^i_e}(x,t)$.  Then
	\begin{align*} 
		&S_{+,e}^{i+1} (s, \xi_+^{\mathcal{S}^i_e}(s,x,t))
		-\tilde S_{+,e}^{i+1} (s, \xi_+^{\mathcal{\tilde S}^i_e}(s,x,t))\\
		&= \int_{t_+^{\mathcal{S}^i_e}(x,t)}^s \e^{-\tfrac{\mu}{2}(s-r)}
		\Big( \tfrac{\mu}{2} \big(( R_{+,e}-R_{-,e})(r, \xi_+^{\mathcal{S}^i_e}(r,x,t)) -( R_{+,e}-R_{-,e})(r, \xi_+^{\mathcal{\tilde S}^i_e}(r,x,t)) \big) \\
		&\qquad +  \tfrac{\mu}{2} \big(S_{-,e}^i(r, \xi_+^{\mathcal{S}^i_e}(r,x,t)) -\tilde S_{-,e}^i(r, \xi_+^{\mathcal{\tilde S}^i_e}(r,x,t)) \big)\\
		&\qquad- (\sigma  (\mathcal{S}^i_e) (r, \xi_+^{\mathcal{S}^i_e}(r,x,t))
		- \sigma  (\mathcal{\tilde S}^i_e) (r, \xi_+^{\mathcal{\tilde S}^i_e}(r,x,t)) ) \Big)dr\\
		&\quad-\int_{t_+^{\mathcal{\tilde S}^i_e}(x,t)}^{t_+^{\mathcal{S}^i_e}(x,t)} \e^{-\tfrac{\mu}{2}(s-r)} 
		\left( \tfrac{\mu}{2} ( R_{+,e}-R_{-,e} +\tilde S_{-,e}^i)(r, \xi_+^{\mathcal{\tilde S}^i_e}(r,x,t)) 
		- \sigma  (\mathcal{\tilde S}^i_e) (r, \xi_+^{\mathcal{\tilde S}^i_e}(r,x,t))  \right)dr\\
		&\quad + \big(S_{+,e}^{i+1}\xipe{t_+^{\mathcal{S}^i_e}(x,t)} \e^{-\tfrac{\mu}{2}(s-t_+^{\mathcal{ S}^i_e}(x,t))}\\
		&\qquad -\tilde S_{+,e}^{i+1}\xipte{t_+^{\mathcal{\tilde S}^i_e}(x,t)}\e^{-\tfrac{\mu}{2}(s-t_+^{\mathcal{ \tilde S}^i_e}(x,t))} \big)\\
		& =: \int_{t_+^{\mathcal{S}^i_e}(x,t)}^s \e^{-\tfrac{\mu}{2}(s-r)}
		\Big( (F_a) + (F_b) + (F_c)  \Big) dr
		+ (F_d) + (F_e).
	\end{align*}
	Using the Lipschitz-constant of $R_\pm$ with respect to $x$, we can estimate $(F_a)$ by
	\begin{align*}
		|(F_a)| &=  \tfrac{\mu}{2} \big|( R_{+,e}-R_{-,e})(r, \xi_+^{\mathcal{S}^i_e}(r,x,t)) -( R_{+,e}-R_{-,e})(r, \xi_+^{\mathcal{\tilde S}^i_e}(r,x,t)) \big|\\
		&\le \tfrac{\mu}{2} \, 2 \,  L_R |\xi_+^{\mathcal{S}^i_e}(r,x,t) - \xi_+^{\mathcal{\tilde S}^i_e}(r,x,t)|.
	\end{align*}
	Analogously to Lemma 5.1 in \cite{GugatUlbrich2018} we can show that 
	\begin{align}\label{eq:estimate_xi_Stilde}
		|\xi_+^{\mathcal{S}^i_e}(r,x,t) - \xi_+^{\mathcal{\tilde S}^i_e}(r,x,t)|
		\le T L_\lambda \exp(2 L_R T  L_\lambda) \|\mathcal{S}^i_e-\mathcal{\tilde S}^i_e\|_M
		= L_\lambda  T  L_x \|\mathcal{S}^i_e-\mathcal{\tilde S}^i_e\|_M.
	\end{align}
	This implies
	\begin{align*}
		|(F_a)| \le \mu L_R L_\lambda  T  L_x \|\mathcal{S}^i_e-\mathcal{\tilde S}^i_e\|_M.
	\end{align*}
	For $(F_b)$, we can estimate
	\begin{align*}
		|(F_b)| 
		&\le \tfrac{\mu}{2} \big( |S_{-,e}^i(r, \xi_+^{\mathcal{S}^i_e}(r,x,t)) -\tilde S_{-,e}^i(r, \xi_+^{\mathcal{S}^i_e}(r,x,t))|
		 +|\tilde S_{-,e}^i(r, \xi_+^{\mathcal{S}^i_e}(r,x,t)) -\tilde S_{-,e}^i(r, \xi_+^{\mathcal{\tilde S}^i_e}(r,x,t))| \big) \\
		&\le \tfrac{\mu}{2} \|\mathcal{S}^i_e-\mathcal{\tilde S}^i_e\|_M
		+ \tfrac{\mu}{2}  L_R |\xi_+^{\mathcal{S}^i_e}(r,x,t) - \xi_+^{\mathcal{\tilde S}^i_e}(r,x,t) |\\
		&\le \tfrac{\mu}{2}\left( 1+ L_R  L_\lambda  T  L_x \right) \|\mathcal{S}^i_e-\mathcal{\tilde S}^i_e \|_M.
	\end{align*}
	The friction term $(F_c)$ can be bounded by
	\begin{align*}
		|(F_c)|&= |\sigma  (\mathcal{S}^i_e) (r, \xi_+^{\mathcal{S}^i_e}(r,x,t))
		- \sigma  (\mathcal{\tilde S}^i_e) (r, \xi_+^{\mathcal{\tilde S}^i_e}(r,x,t)) |\\
		&\le L_\sigma 2 L_R |\xi_+^{\mathcal{S}^i_e}(r,x,t)-\xi_+^{\mathcal{\tilde S}^i_e}(r,x,t)| +L_\sigma  \|\mathcal{S}^i_e-\mathcal{\tilde S}^i_e\|_M\\
		&\le L_\sigma \left(1+2 L_R     L_\lambda  T  L_x \right) \|\mathcal{S}^i_e-\mathcal{\tilde S}^i_e\|_M. 
	\end{align*}
	The integral $(F_d)$ can be estimated by
	\begin{align*}
		|(F_d)| 
		\le \big(\tfrac{3 \mu}{2} S_{\max} +\sigma_{\max} \big)  |t_+^{\mathcal{\tilde S}^i_e}(x,t) - t_+^{\mathcal{S}^i_e}(x,t)|
		\le  \big(\tfrac{3 \mu}{2} S_{\max} +\sigma_{\max} \big) \tfrac{L_\lambda}{\ubar\Lambda(S_{\max})} T L_x  \|\mathcal{S}^i_e-\mathcal{\tilde S}^i_e\|_M,
	\end{align*}
	where we have used in the last step that $|t_+^{\mathcal{\tilde S}^i_e}(x,t) - t_+^{\mathcal{S}^i_e}(x,t)|$ can be estimated as in Lemma 5.1 of \cite{GugatUlbrich2018}.
	Therefore we can summarize
	\begin{align*}
		&\Big|\int_{t_+^{\mathcal{S}^i_e}(x,t)}^s \e^{-\tfrac{\mu}{2}(s-r)}
		\Big( (F_a) + (F_b) + (F_c)  \Big) dr + (F_d) \Big |\\
		&\le \Big(  \tfrac{1}{12} \tfrac{1}{4+C(n)}\big( 1 
		+  3  L_\lambda L_R T L_x\big)  +T L_\sigma (1+2L_\lambda L_R T L_x)\\
		&\quad  +\big( \tfrac{3 \mu}{2} S_{\max} + \sigma_{\max}\big)  \tfrac{L_\lambda}{\ubar\Lambda(S_{\max})} T L_x \Big)
		 \|\mathcal{S}^i_e-\mathcal{\tilde S}^i_e\|_M\\
		&\le \Big( \tfrac{1}{12} 
		 	+ \big( \tfrac{1}{4}  L_R  + (\tfrac{3 \mu}{2} S_{\max} + \sigma_{\max})\tfrac{1}{\ubar\Lambda(S_{\max})} 
		 	\big)  L_\lambda T L_x +T L_\sigma (1+2L_\lambda L_R T L_x)
		 	\Big) 
		 	 \|\mathcal{S}^i_e-\mathcal{\tilde S}^i_e\|_M.
	\end{align*}
	Now, we choose $S_{\max}>0$ and $L_R>0$ sufficiently small such that
	\begin{align*}
		\tfrac{1}{12} 
		+ \big( \tfrac{1}{4}  L_R  + (\tfrac{3 \mu}{2} S_{\max} + \sigma_{\max})\tfrac{1}{\ubar\Lambda(S_{\max})}  
		\big)  L_\lambda T L_x +T L_\sigma (1+2L_\lambda L_R T L_x)
		\le \tfrac{2}{3},
	\end{align*}
	i.e., the contraction constant of the terms $(F_a)-(F_d)$ is bounded by $\tfrac{2}{3}$.
	
	For the estimation of term $(F_e)$ we distinguish three cases. 
	
	\textbf{Case 1:} $t_+^{\mathcal{ S}^i_e}(x,t)=0=t_+^{\mathcal{\tilde S}^i_e}(x,t)$.\newline
	In this case we have
	\begin{align*}
		|(F_e)| &= |S_{+,e}^{i+1}\xipe{0} \e^{-\tfrac{\mu}{2}s}-\tilde S_{+,e}^{i+1}\xipte{0}\e^{-\tfrac{\mu}{2}s}|\\
		&\le |y_+^e(\xipoe(0,x,t))  - y_+^e(\xiptoe (0,x,t) ) | \, |\e^{-\tfrac{\mu}{2}s}|\\
		&\le L_I \,  |\xipoe(0,x,t) - \xiptoe (0,x,t)| 
		\le L_I \, L_\lambda  T  L_x \|\mathcal{S}^i_e-\mathcal{\tilde S}^i_e\|_M.
	\end{align*}

	\textbf{Case 2:} $t_+^{\mathcal{ S}^i_e}(x,t)>0$ and $t_+^{\mathcal{\tilde S}^i_e}(x,t)>0$.\newline
	Then $\xipoe(t_+^{\mathcal{S}^i_e}(x,t),x,t)=\xiptoe(t_+^{\mathcal{\tilde S}^i_e}(x,t),x,t)$ is a boundary node or an inner node.
	
	\textbf{Case 2a:}
	If $\xipoe(t_+^{\mathcal{S}^i_e}(x,t),x,t)=\xiptoe(t_+^{\mathcal{\tilde S}^i_e}(x,t),x,t)$ is a boundary node and we use the boundary conditions of Case (i), 
	we have
	\begin{align*}
		|(F_e)| &= |S_{+,e}^{i+1}( t_+^{\mathcal{S}^i_e}(x,t), 0) \e^{-\tfrac{\mu}{2}(s-t_+^{\mathcal{ S}^i_e}(x,t))} -\tilde S_{+,e}^{i+1}(t_+^{\mathcal{\tilde S}^i_e}(x,t), 0) \e^{-\tfrac{\mu}{2}(s-t_+^{\mathcal{ \tilde S}^i_e}(x,t))} |\\
		&= |u_+^e(t_+^{\mathcal{S}^i_e}(x,t)) - u_+^e(t_+^{\mathcal{\tilde S}^i_e}(x,t))   |\, |\e^{-\tfrac{\mu}{2}(s-t_+^{\mathcal{ S}^i_e}(x,t))}|\\
		&\quad  +|u_+^e(t_+^{\mathcal{\tilde S}^i_e}(x,t))| \,  |\e^{-\tfrac{\mu}{2}(s-t_+^{\mathcal{ S}^i_e}(x,t))} -\e^{-\tfrac{\mu}{2}(s-t_+^{\mathcal{\tilde  S}^i_e}(x,t))}|  \\
		&\le L_I |t_+^{\mathcal{S}^i_e}(x,t) - t_+^{\mathcal{\tilde S}^i_e}(x,t)|
		+B_{\max} \tfrac{\mu}{2} |t_+^{\mathcal{S}^i_e}(x,t) - t_+^{\mathcal{\tilde S}^i_e}(x,t)|\\
		&\le ( L_I +B_{\max} \tfrac{\mu}{2}  ) \, \tfrac{L_\lambda}{\ubar\Lambda(S_{\max})} T  L_x \|\mathcal{S}^i_e-\mathcal{\tilde S}^i_e\|_M.
	\end{align*}
	
	\textbf{Case 2b:}
	If $\xipoe(t_+^{\mathcal{S}^i_e}(x,t),x,t)=\xiptoe(t_+^{\mathcal{\tilde S}^i_e}(x,t),x,t)$ is an inner node $\nu$ of the network, then
	\begin{align*}
		|(F_e)| 
		&= |S_{+,e}^{i+1}(t_+^{\mathcal{ S}^i_e}(x,t), \nu) \e^{-\tfrac{\mu}{2}(s-t_+^{\mathcal{ S}^i_e}(x,t))} -\tilde S_{+,e}^{i+1}(t_+^{\mathcal{\tilde S}^i_e}(x,t), \nu )  \e^{-\tfrac{\mu}{2}(s-t_+^{\mathcal{\tilde  S}^i_e}(x,t))}|\\
		&\le |S_{+,e}^{i+1}(t_+^{\mathcal{ S}^i_e}(x,t), \nu)|\, |\e^{-\tfrac{\mu}{2}(s-t_+^{\mathcal{ S}^i_e}(x,t))} -\e^{-\tfrac{\mu}{2}(s-t_+^{\mathcal{\tilde  S}^i_e}(x,t))}| \\
		& \quad +|S_{+,e}^{i+1}(t_+^{\mathcal{ S}^i_e}(x,t), \nu) -\tilde S_{+,e}^{i+1}(t_+^{\mathcal{\tilde S}^i_e}(x,t), \nu )| \, |\e^{-\tfrac{\mu}{2}(s-t_+^{\mathcal{ \tilde S}^i_e}(x,t))}|\\
		&\le  S_{\max} \tfrac{\mu}{2} \tfrac{L_\lambda}{\ubar\Lambda(S_{\max})} T  L_x  \|\mathcal{S}^i_e-\mathcal{\tilde S}^i_e\|_M\\
		&\quad+C(n) |S_{-}^{i+1}(t_+^{\mathcal{ S}^i_e}(x,t), \nu) -\tilde S_{-}^{i+1}(t_+^{\mathcal{\tilde S}^i_e}(x,t), \nu)|_\infty
	\end{align*}
	with
	\begin{align*}
		&|S_{-,f}^{i+1}(t_+^{\mathcal{ S}^i_e}(x,t), \nu) -\tilde S_{-,f}^{i+1}(t_+^{\mathcal{\tilde S}^i_e}(x,t), \nu )|\\
		&\le |S_{-,f}^{i+1}(0,\xi_-^{\mathcal{S}^i_f}(0,\nu,t_+^{\mathcal{S}^i_e}(x,t)))
		\e^{-\tfrac{\mu}{2}t_+^{\mathcal{ S}^i_e}(x,t)} 
		-\tilde S_{-,f}^{i+1}(0,\xi_-^{\mathcal{\tilde S}^i_f}(0,\nu,t_+^{\mathcal{\tilde S}^i_e}(x,t)))
		\e^{-\tfrac{\mu}{2}t_+^{\mathcal{ \tilde S}^i_e}(x,t)} |\\
		&\quad + \Big|\int_{0}^{t_+^{\mathcal{S}^i_e}(x,t)}
		\e^{-\tfrac{\mu}{2}(t_+^{\mathcal{S}^i_e}(x,t)-r)}
		\Big( -\tfrac{\mu}{2} ( R_{+,f}-R_{-,f} -S_{+,f}^i)(r, \xi_-^{\mathcal{S}^i_f}(r,\nu,t_+^{\mathcal{S}^i_e}(x,t))) \\
		&\hspace{4.5cm} +\sigma  (\mathcal{S}^i_f) (r, \xi_-^{\mathcal{S}^i_f}(r,\nu,t_+^{\mathcal{S}^i_e}(x,t)))  \Big)dr	\\
		&\qquad-\int_{0}^{t_+^{\mathcal{\tilde S}^i_e}(x,t)}
		\e^{-\tfrac{\mu}{2}(t_+^{\mathcal{\tilde S}^i_e}(x,t)-r)}
		\Big( -\tfrac{\mu}{2} ( R_{+,f}-R_{-,f} -\tilde S_{+,f}^i)(r, \xi_-^{\mathcal{\tilde S}^i_f}(r,\nu,t_+^{\mathcal{\tilde S}^i_e}(x,t))) \\
		&\hspace{4.5cm} +\sigma  (\mathcal{\tilde S}^i_f) (r, \xi_-^{\mathcal{\tilde S}^i_f}(r,\nu,t_+^{\mathcal{\tilde S}^i_e}(x,t)))  \Big)dr	\Big|\\
		&\le L_I |\xi_-^{\mathcal{S}^i_f}(0,\nu,t_+^{\mathcal{S}^i_e}(x,t)) -\xi_-^{\mathcal{\tilde S}^i_f}(0,\nu,t_+^{\mathcal{\tilde S}^i_e}(x,t))|
		+B_{\max} \tfrac{\mu}{2} \tfrac{L_\lambda}{\ubar\Lambda(S_{\max})} T  L_x  \|\mathcal{S}^i_e-\mathcal{\tilde S}^i_e\|_M\\
		&\quad + (\tfrac{1}{12}\tfrac{1}{4+C(n)} 3  L_R +2 T L_\sigma L_R) \max_{r\in[0, t_+^{\mathcal{\tilde S}^i_e}(x,t)]}|\xi_-^{\mathcal{S}^i_f}(r,\nu,t_+^{\mathcal{S}^i_e}(x,t)) -\xi_-^{\mathcal{\tilde S}^i_f}(r,\nu,t_+^{\mathcal{\tilde S}^i_e}(x,t))| \\ 
		&\quad+(\tfrac{1}{12}\tfrac{1}{4+C(n)}+2 T L_\sigma)\|\mathcal{S}^i_f-\mathcal{\tilde S}^i_f\|_M 
		+ (3\tfrac{\mu}{2} S_{\max}+\sigma_{\max})(1+\tfrac{\mu}{2}T) \tfrac{L_\lambda}{\ubar\Lambda(S_{\max})} T  L_x  \|\mathcal{S}^i_e-\mathcal{\tilde S}^i_e\|_M,
	\end{align*}
	where we used in the last step the estimation of $|t_+^{\mathcal{ S}^i_e}(x,t) -t_+^{\mathcal{\tilde S}^i_e}(x,t)|$ as in $(F_d)$ and estimated the integral terms similar to \eqref{eq:S-_self-mapping}.
	For the characteristic curves we can estimate for $r\in [0, t_+^{\mathcal{\tilde S}^i_e}(x,t)]$
	\begin{align*}
		&|\xi_-^{\mathcal{S}^i_f}(r,\nu,t_+^{\mathcal{S}^i_e}(x,t)) -\xi_-^{\mathcal{\tilde S}^i_f}(r,\nu,t_+^{\mathcal{\tilde S}^i_e}(x,t)| \\
		&\le |\xi_-^{\mathcal{ S}^i_f}(r,\nu,t_+^{\mathcal{S}^i_e}(x,t))
		-\xi_-^{\mathcal{\tilde S}^i_f}(r,\nu,t_+^{\mathcal{S}^i_e}(x,t))|
		 +|\xi_-^{\mathcal{\tilde S}^i_f}(r,\nu,t_+^{\mathcal{S}^i_e}(x,t))
		-\xi_-^{\mathcal{\tilde S}^i_f}(r,\nu,t_+^{\mathcal{\tilde S}^i_e}(x,t)| \\
		&\underset{\eqref{eq:estimate_xi_Stilde}}{\le} L_\lambda  T  L_x \|\mathcal{S}^i_f-\mathcal{\tilde S}^i_f\|_M
		+ |\xi_-^{\mathcal{\tilde S}^i_f}(r,\nu,t_+^{\mathcal{S}^i_e}(x,t))
		-\xi_-^{\mathcal{\tilde S}^i_f}(r,\xi_-^{\mathcal{\tilde S}^i_f}(t_+^{\mathcal{S}^i_e}(x,t),\nu,t_+^{\mathcal{\tilde S}^i_e}(x,t)),t_+^{\mathcal{ S}^i_e}(x,t))| \\
		&\le L_\lambda  T  L_x \|\mathcal{S}^i_f-\mathcal{\tilde S}^i_f\|_M + L_x L_s \tfrac{L_\lambda}{\ubar\Lambda(S_{\max})} T  L_x  \|\mathcal{S}^i_e-\mathcal{\tilde S}^i_e\|_M.
	\end{align*}
	In the second step we used $\xi_-^{\mathcal{\tilde S}^i_f}(r,\nu,t_+^{\mathcal{\tilde S}^i_e}(x,t))
	=\xi_-^{\mathcal{\tilde S}^i_f}(r,\xi_-^{\mathcal{\tilde S}^i_f}(t_+^{\mathcal{S}^i_e}(x,t),\nu,t_+^{\mathcal{\tilde S}^i_e}(x,t)),t_+^{\mathcal{ S}^i_e}(x,t))$.
	This yields 
	\begin{align*}
		|(F_e)|
		&\le \Big(S_{\max} \tfrac{\mu}{2} +C(n) L_I (\ubar\Lambda(S_{\max}) +L_s L_x)
		 +C(n)B_{\max} \tfrac{\mu}{2}\\
		&\qquad+(\tfrac{1}{4} L_R + 2 T C(n) L_\sigma L_R )(\ubar\Lambda(S_{\max})+ L_s L_x)
		+ C(n) (3\tfrac{\mu}{2} S_{\max}+\sigma_{\max})(1+\tfrac{\mu}{2}T)
		\Big)\\
		&\quad \cdot \tfrac{L_\lambda}{\ubar\Lambda(S_{\max})} T  L_x  \|\mathcal{S}^i-\mathcal{\tilde S}^i\|_{C([0,T]\times\E)^2}
		+(\tfrac{1}{12}+ 2 T C(n) L_\sigma) \|\mathcal{S}^i-\mathcal{\tilde S}^i\|_{C([0,T]\times\E)^2}.
	\end{align*}
	
	\textbf{Case 2c:} If $\xipoe(t_+^{\mathcal{S}^i_e}(x,t),x,t)=\xiptoe(t_+^{\mathcal{\tilde S}^i_e}(x,t),x,t)$ is a boundary node $\nu$ of the network
	and we use the boundary conditions of Case (ii), 
	then 
	\begin{align*}
		|(F_e)| 
		&= |S_{+,e}^{i+1}(t_+^{\mathcal{ S}^i_e}(x,t), \nu) \e^{-\tfrac{\mu}{2}(s-t_+^{\mathcal{ S}^i_e}(x,t))} -\tilde S_{+,e}^{i+1}(t_+^{\mathcal{\tilde S}^i_e}(x,t), \nu )  \e^{-\tfrac{\mu}{2}(s-t_+^{\mathcal{\tilde  S}^i_e}(x,t))}|\\
		&\le |S_{+,e}^{i+1}(t_+^{\mathcal{ S}^i_e}(x,t), \nu)|\, |\e^{-\tfrac{\mu}{2}(s-t_+^{\mathcal{ S}^i_e}(x,t))} -\e^{-\tfrac{\mu}{2}(s-t_+^{\mathcal{\tilde  S}^i_e}(x,t))}| \\
		& \quad +|S_{+,e}^{i+1}(t_+^{\mathcal{ S}^i_e}(x,t), \nu) -\tilde S_{+,e}^{i+1}(t_+^{\mathcal{\tilde S}^i_e}(x,t), \nu )| \, |\e^{-\tfrac{\mu}{2}(s-t_+^{\mathcal{ \tilde S}^i_e}(x,t))}|\\
		&\le  S_{\max} \tfrac{\mu}{2} \tfrac{L_\lambda}{\ubar\Lambda(S_{\max})} T  L_x  \|\mathcal{S}^i_e-\mathcal{\tilde S}^i_e\|_M
		+|S_{+,e}^{i+1}(t_+^{\mathcal{ S}^i_e}(x,t), \nu) -\tilde S_{+,e}^{i+1}(t_+^{\mathcal{\tilde S}^i_e}(x,t), \nu )| 
	\end{align*}
	with
	\begin{align*}
		&|S_{+,e}^{i+1}(t_+^{\mathcal{ S}^i_e}(x,t), \nu) -\tilde S_{+,e}^{i+1}(t_+^{\mathcal{\tilde S}^i_e}(x,t), \nu )| \\
		&\le 3 |S_{-,e}^{i+1}(t_+^{\mathcal{ S}^i_e}(x,t), \nu) -\tilde S_{-,e}^{i+1}(t_+^{\mathcal{\tilde S}^i_e}(x,t), \nu )| 
		+C_b L_I \tfrac{L_\lambda}{\ubar\Lambda(S_{\max})} T  L_x  \|\mathcal{S}^i_e-\mathcal{\tilde S}^i_e\|_M.
	\end{align*}
	As in Case 2b we estimate
	\begin{align*}
		&|S_{-,e}^{i+1}(t_+^{\mathcal{ S}^i_e}(x,t), \nu) -\tilde S_{-,e}^{i+1}(t_+^{\mathcal{\tilde S}^i_e}(x,t), \nu )| \\
		&\le L_I (\ubar\Lambda(S_{\max})+L_x L_s) \tfrac{L_\lambda}{\ubar\Lambda(S_{\max})} T  L_x  \|\mathcal{S}^i_e-\mathcal{\tilde S}^i_e\|_M
		+B_{\max} \tfrac{\mu}{2} \tfrac{L_\lambda}{\ubar\Lambda(S_{\max})} T  L_x  \|\mathcal{S}^i_e-\mathcal{\tilde S}^i_e\|_M\\
		&\quad + (\tfrac{1}{12}\tfrac{1}{4+C(n)} 3  L_R +2 T L_\sigma L_R) (\ubar\Lambda(S_{\max})+L_x L_s) \tfrac{L_\lambda}{\ubar\Lambda(S_{\max})} T  L_x  \|\mathcal{S}^i_e-\mathcal{\tilde S}^i_e\|_M \\ 
		&\quad+(\tfrac{1}{12}\tfrac{1}{4+C(n)}+2 T L_\sigma)\|\mathcal{S}^i_f-\mathcal{\tilde S}^i_f\|_M 
		+ (3\tfrac{\mu}{2} S_{\max}+\sigma_{\max})(1+\tfrac{\mu}{2}T) \tfrac{L_\lambda}{\ubar\Lambda(S_{\max})} T  L_x  \|\mathcal{S}^i_e-\mathcal{\tilde S}^i_e\|_M\\
		&\le \tfrac{L_\lambda}{\ubar\Lambda(S_{\max})} T  L_x  \|\mathcal{S}^i_e-\mathcal{\tilde S}^i_e\|_M
		\Big(  L_I (\ubar\Lambda(S_{\max})+L_x L_s) 
		+B_{\max} \tfrac{\mu}{2} \\
		&\qquad + (\tfrac{1}{4}\tfrac{1}{4+C(n)}  L_R +2 T L_\sigma L_R) (\ubar\Lambda(S_{\max})+L_x L_s)  
		+ (3\tfrac{\mu}{2} S_{\max}+\sigma_{\max})(1+\tfrac{\mu}{2}T) \Big)\\
		&\quad+(\tfrac{1}{12}\tfrac{1}{4+C(n)}+2 T L_\sigma)\|\mathcal{S}^i_f-\mathcal{\tilde S}^i_f\|_M 
	\end{align*}
	i.e.,
	\begin{align*}
		|(F_e)|
		&\le S_{\max} \tfrac{\mu}{2} \tfrac{L_\lambda}{\ubar\Lambda(S_{\max})} T  L_x  \|\mathcal{S}^i_e-\mathcal{\tilde S}^i_e\|_M
		+ (\tfrac{1}{4}\tfrac{1}{4+C(n)}+6 T L_\sigma)\|\mathcal{S}^i_f-\mathcal{\tilde S}^i_f\|_M \\
		&\quad +3 \tfrac{L_\lambda}{\ubar\Lambda(S_{\max})} T  L_x  \|\mathcal{S}^i_e-\mathcal{\tilde S}^i_e\|_M
		\Big(  L_I (\ubar\Lambda(S_{\max})+L_x L_s) 
		+B_{\max} \tfrac{\mu}{2} \\
		&\qquad + (\tfrac{1}{4}\tfrac{1}{4+C(n)}  L_R +2 T L_\sigma L_R) (\ubar\Lambda(S_{\max})+L_x L_s)  
		+ (3\tfrac{\mu}{2} S_{\max}+\sigma_{\max})(1+\tfrac{\mu}{2}T) \Big)\\
		&\quad + C_b L_I \tfrac{L_\lambda}{\ubar\Lambda(S_{\max})} T  L_x  \|\mathcal{S}^i_e-\mathcal{\tilde S}^i_e\|_M.
	\end{align*}

	\textbf{Case 3:} $t_+^{\mathcal{ S}^i_e}(x,t)=0$, $t_+^{\mathcal{\tilde S}^i_e}(x,t)>0$.\newline
	Then $\xiptoe(t_+^{\mathcal{\tilde S}^i_e}(x,t),x,t)=0$ is either a boundary node or an inner node. Again, this case can be treated similar to Case 2.

	In all Cases 1, 2 and 3 we can choose $S_{\max}$, $B_{\max}$, $L_I$ and $L_R$ sufficiently small such that
	$|(F_e)|\le \tfrac{1}{6} \|\mathcal{S}^i-\mathcal{\tilde S}^i\|_{C([0,T]\times\E)^2}$.
	Together with the estimates for terms $(F_a)-(F_d)$ above this shows that the
	fixed-point map $\Phi$ is a contraction with contraction constant bounded by $\tfrac{2}{3}+\tfrac{1}{6}<1$, if $S_{\max}$, $B_{\max}$, $L_I$ and $L_R$ are sufficiently small.
	Therefore the Banach fixed-point theorem shows the existence of a unique fixed-point, which solves the observer system \eqref{eq:obs_RI_v-measurement}
	together with the initial conditions \eqref{eq:obs_RI_IC}, the boundary conditions of Case (i) or (ii) 
	and the coupling conditions \eqref{eq:cc_RI_m}-\eqref{eq:cc_RI_h}.	
\end{proof}

Using Theorem \ref{thm:localExistence} we can now show the existence of semi-global solutions on any finite time interval $[0,T]$ (cf. Theorem 6.1 in \cite{GugatUlbrich2018}).

\begin{thm} \label{thm:semi-global_existence}
	Let some $T>0$ be given and assume that the conditions of Theorem \ref{thm:localExistence} (apart from the restriction on $T$) are satisfied.
	Then there exist a constant $C_T>0$ and constants $c_1(T),\, c_2(T),\, c_3(T)$, $c_4(T)>0$ such that, if
	\begin{align*}
	B_{\max}\le c_1(T),\quad L_I\le c_2(T),\quad
	S_{\max}\le c_3(T), \ \text{ and } L_R \le c_4(T),
	\end{align*}
	then the observer system \eqref{eq:obs_RI_v-measurement} for velocity measurements together with the
	initial conditions~\eqref{eq:obs_RI_IC}, the boundary conditions of Case (i) or (ii)
	and the coupling conditions \eqref{eq:cc_RI_m}-\eqref{eq:cc_RI_h} admits a solution on $[0,T]$ satisfying
	\begin{align*}
	\|S_\pm\|_{L^\infty([0,T]\times[0,\ell])} \le C_T.
	\end{align*}
\end{thm}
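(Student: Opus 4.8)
The plan is a continuation (patching) argument in the spirit of Theorem~6.1 of \cite{GugatUlbrich2018}: subdivide $[0,T]$ into finitely many intervals of length below the local existence time, solve on each by Theorem~\ref{thm:localExistence}, and glue. The only genuine work is in tracking how the smallness constants propagate from one interval to the next, because Theorem~\ref{thm:localExistence} only places the solution of a subproblem in $M(S_{\max},L_R)$, whereas to restart on the next interval one needs the terminal value of the previous solution to be admissible \emph{initial} data, i.e.\ to respect the (typically smaller) bound on the data rather than merely the bound $S_{\max}$.

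First I would fix, depending only on $S_{\max}$ and $\mu$, a step size $\tau>0$ meeting both restrictions on $T$ in Theorem~\ref{thm:localExistence}, namely $\tau<\min_{e\in\E}\ell^e/\Lambda(S_{\max})$ and $1-\e^{-\tfrac{\mu}{2}\tau}\le\tfrac{1}{12}\min\{1,\tfrac{1}{4+C(n)}\tfrac49\tfrac{\ubar\Lambda(S_{\max})}{\Lambda(S_{\max})}\}$, and set $N:=\lceil T/\tau\rceil$, $t_k:=k\tau$ for $k<N$, $t_N:=T$. Then I would revisit the proof of Theorem~\ref{thm:localExistence} and observe that nothing forces the fixed-point map $\Phi$ to be run in $M(S_{\max},L_R)$: if $b$ denotes a common bound on the initial data, the boundary data, and — for $\mu>0$ — on the given original solution $\mathcal R$ restricted to the interval under consideration, then inspecting the self-mapping estimate shows that $\Phi$ maps the smaller ball $M(Kb,L_R)$ into itself and is still a contraction there, for a fixed constant $K\ge 1$ arising from the factor $C(n)$ of Lemma~\ref{lem:coupling_conditions}, the factor $3$ of Lemma~\ref{lem:boundary_cond}, and the $\tfrac12 S_{\max}$- and $\sigma_{\max}$-type contributions, provided $Kb$ stays below the finitely many fixed smallness thresholds implicit in that proof; moreover the $x$-Lipschitz constant of the resulting solution is again $\le L_R$. (All estimates in the proof of Theorem~\ref{thm:localExistence} are monotone in the size parameter $S_{\max}$, and the eigenvalue bounds $\ubar\Lambda,\Lambda$ only improve on a smaller ball, so the step size $\tau$ chosen with respect to $S_{\max}$ remains admissible.) In short, one step of length $\le\tau$ outputs a solution bounded in $L^\infty$ by $K$ times the data on that interval, with $x$-Lipschitz constant $\le L_R$.

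With this refinement the induction is routine. On $[t_0,t_1]$ apply Theorem~\ref{thm:localExistence} with data bounded by $b_0:=\max\{B_{\max},S_{\max}\}$ (for $\mu>0$, $\mathcal R|_{[t_0,t_1]}$ inherits $\|R_\pm\|\le S_{\max}$ and $x$-Lipschitz constant $\le L_R$ from the hypotheses), obtaining $\mathcal S$ on $[t_0,t_1]$ with $\|S_\pm\|_{L^\infty}\le b_1:=Kb_0$ and $x$-Lipschitz constant $\le L_R$. Restart on $[t_1,t_2]$ with initial data $\mathcal S(t_1,\cdot)$: it is $C^0$-compatible with the (restricted) boundary and coupling conditions because $\mathcal S$ satisfies those at $t=t_1$, it is $x$-Lipschitz with constant $\le L_R$, and it is bounded by $b_1$; provided $L_R\le L_I$ it is also admissible initial data in the sense of Theorem~\ref{thm:localExistence}. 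Iterating, $b_{k+1}\le Kb_k\le K^{k}\,b_0$ for $k=1,\dots,N$, while the time-Lipschitz constant $L_I$ of the boundary data (unchanged on each subinterval) and the $x$-Lipschitz constant $L_R$ never grow. Requiring that all smallness hypotheses of Theorem~\ref{thm:localExistence} hold at each of the $N$ steps reduces to finitely many conditions of the form $K^{N}\max\{B_{\max},S_{\max}\}\le\delta$, $L_I\le\delta'$, $L_R\le\min\{\delta'',\delta'\}$ with $\delta,\delta',\delta''$ fixed, yielding $c_1(T)=c_3(T)=\delta/K^{N(T)}$, $c_2(T)=\delta'$, $c_4(T)=\min\{\delta'',\delta'\}$. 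Gluing the $N$ pieces gives a function that is Lipschitz in $x$ with constant $\le L_R$ and Lipschitz in $t$ on each edge, that is continuous across the $t_k$ (left and right traces coincide by construction), and that satisfies the observer system, the boundary conditions, the coupling conditions and the initial conditions; it obeys $\|S_\pm\|_{L^\infty([0,T]\times[0,\ell])}\le K^{N}\max\{c_1(T),c_3(T)\}=\delta=:C_T$.

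The main obstacle is exactly the step where the interior of the proof of Theorem~\ref{thm:localExistence} has to be re-opened: one must verify that running the Banach iteration in the rescaled ball $M(Kb,L_R)$ preserves both the self-mapping property and the contraction constant strictly below $1$, i.e.\ that every bound in that proof degrades only through the fixed thresholds and never through a $T$-dependent quantity. Granting this, the geometric growth $b_k\lesssim K^k$ is harmless since $N=N(T)$ is fixed and one simply takes the data correspondingly small; in particular the speed of synchronization plays no role here, and the argument is insensitive to whether $\mu=0$ (original system) or $\mu>0$ (observer system driven by a fixed $\mathcal R$).
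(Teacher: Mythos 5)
Your overall scheme is the paper's: subdivide $[0,T]$ into $N(T)$ subintervals whose length satisfies the two restrictions of Theorem \ref{thm:localExistence}, apply the local theorem on each piece, accept a fixed geometric growth factor per step for the size of the solution, and compensate by shrinking the admissible data by that factor to the power $N(T)$ (the paper encodes exactly this bookkeeping in the functions $f_1,f_2$ built from the mutual smallness relations between $B_{\max}$, $L_I$, $S_{\max}$, $L_R$).

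There is, however, a genuine gap in how you propagate the Lipschitz constants. You claim that the $x$-Lipschitz parameter ``never grows'' and that the terminal state of one step is admissible initial data for the next ``provided $L_R\le L_I$''. This is incompatible with the smallness requirements inside the proof of Theorem \ref{thm:localExistence}: there the data's Lipschitz constant must be small \emph{relative to} the ball parameter $L_R$ (e.g.\ Case 1 of the estimate of $f_{1,e}$ needs $L_I L_x\le\tfrac12 L_R$ with $L_x=\exp(2L_RTL_\lambda)\ge 1$, and analogous factors occur in Cases 2 and 3), so demanding $L_R\le L_I$ forces $L_R\le L_I\le\tfrac12 L_R$, i.e.\ $L_R=0$. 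What the local theorem actually delivers on $[t_{k-1},t_k]$ is only the bound $L_R$ on the solution's $x$-Lipschitz constant, not a definite fraction of it; hence to restart on $[t_k,t_{k+1}]$ you must enlarge the Lipschitz ball, at least by a factor of order $2L_x$ per step, so the Lipschitz parameter grows geometrically in $k$ exactly like the sup-norm bound. This is what the paper's recursion $L_{I,k+1}\le L_{R,k}$, $L_{R,k+1}=f_2(B_{\max,k+1},L_{I,k+1})$ tracks, and it is also why $c_2(T)$ and $c_4(T)$ must shrink with $T$, whereas in your construction they are $T$-independent ($c_2(T)=\delta'$, $c_4(T)=\min\{\delta'',\delta'\}$). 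As written, your induction step is therefore unjustified; the repair is the same geometric bookkeeping you already apply to the $L^\infty$-bounds, applied additionally to the Lipschitz parameter (and one must then re-check, as the paper does by monotonicity in the initial $B_{\max,1},L_{I,1}$, that the $L_R$-dependent thresholds such as $L_x\le\tfrac32$ still hold at the final, largest value).
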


\begin{proof}
	Let $T_1>0$ such that $T_1<\min_{e\in\E}\tfrac{\ell^e}{\Lambda(S_{\max})}$ and $1-\e^{-\tfrac{\mu}{2} T_1}\le \tfrac{1}{12}\min\{1,\tfrac{1}{4+C(n)}\tfrac{4}{9}\tfrac{\ubar\Lambda(S_{\max})}{\Lambda(S_{\max})}\}$.
		The main idea of the proof of Theorem \ref{thm:semi-global_existence} is to apply Theorem \ref{thm:localExistence} $\lceil\tfrac{T}{T_1}\rceil$-times.
		
		First, let us note that in addition to the bounds on $B_{\max}$, $L_I$, $S_{\max}$ and $L_R$ in the proof of Theorem \ref{thm:localExistence} we have also bounds on these variables that depend on each other, which can be summarized as
		\begin{align}\label{eq:bounds_globalExistence}
			B_{\max}\le \min\{\tilde c_1 S_{\max}, \tilde c_2 L_R \}, \qquad
			L_I\le \tilde c_3 L_R , \qquad
			S_{\max} \le \min\{\tilde c_4 L_R , \tilde c_5 \sqrt{L_R}\}.
		\end{align}
		Now, consider the interval $[0,T_1]$ and let some initial and boundary data be given that are bounded by $B_{\max,1}$ and are Lipschitz-continuous with respect to $x$ with Lipschitz-constant bounded by $L_{I,1}$. Let
		\begin{align*}
			S_{\max,1}&:=\tfrac{1}{\tilde c_1}B_{\max,1}=:f_1(B_{\max,1}), \\
			L_{R,1}&:=\max\{\tfrac{1}{\tilde c_2}B_{\max,1}, \tfrac{1}{\tilde c_3} L_{I,1}, \tfrac{1}{\tilde c_4} \tfrac{1}{\tilde c_1}B_{\max,1},
			\tfrac{1}{\tilde c_5^2} \tfrac{1}{\tilde c_1^2}B_{\max,1}^2\}=:f_2(B_{\max,1}, L_{I,1}).
		\end{align*}
		Then, $B_{\max,1}$, $L_{I,1}$, $S_{\max,1}$ and $L_{R,1}$ satisfy the bounds \eqref{eq:bounds_globalExistence} and we can choose 
		$B_{\max,1}$ and $L_{I,1}$ sufficiently small such that
		$B_{\max,1}$, $L_{I,1}$, $S_{\max,1}$ and $L_{R,1}$ satisfy the bounds of Theorem  \ref{thm:localExistence}, i.e., Theorem  \ref{thm:localExistence} provides existence of solutions $S_\pm$ on the time interval $[0,T_1]$.
		
		Next, we consider the time interval $[T_1, 2 T_1]$ and use $S_\pm(T_1)$ as initial data, which is bounded by $B_{\max,2}:=S_{\max,1}$ and its Lipschitz-constant is bounded by $L_{I,2}\le L_{R,1}$.
		Again we define
		\begin{align*}
			S_{\max,2}:=f_1(B_{\max,2}), \quad
			L_{R,2}:=f_2(B_{\max,2}, L_{I,2}).
		\end{align*}
		In particular,  $S_{\max,2}$ and $L_{R,2}$ can be written as a monotonically increasing function of $B_{\max,1}$ and $L_{I,1}$.
		Thus, we can choose $B_{\max,1}$ and $L_{I,1}$ sufficiently small such that
		Theorem  \ref{thm:localExistence} provides the existence of solutions also on the time interval $[T_1, 2 T_1]$.
		Proceeding in the same way, we can show existence of a solution $\mathcal{ S}=(S_+, S_-)$ on the time interval $[0,T]$ satisfying
		\begin{align*}
			\|S_\pm\|_{L^\infty(0,T;L^\infty(0,\ell^e))}\le S_{\max,N}=: C_T
		\end{align*}
		with $N=\lceil\tfrac{T}{T_1}\rceil$,
		if $B_{\max}:=B_{\max,1}\le c_1(T)$, $L_I:=L_{I,1}\le c_2(T)$, $S_{\max}:=S_{\max,1}\le c_3(T)$ and $L_R:=L_{R,1}\le c_4(T)$ with $c_1(T),\, c_2(T),\, c_3(T)$, $c_4(T)$ sufficiently small.	
\end{proof}

\section{Exponential synchronization} \label{sec:exp_synchronization} 
In this section, we show convergence of solutions of the observer system \eqref{eq:observer_1}-\eqref{eq:observer_2} towards solutions of the original system \eqref{eq:system_1}-\eqref{eq:system_2} for the case of measurement of one of the fields velocity, density or mass flow, i.e., for observer terms given by \eqref{eq:observer-term_v}, \eqref{eq:observer-term_rho} or \eqref{eq:observer-term_m}.
In general, \eqref{eq:system_1}-\eqref{eq:system_2} and \eqref{eq:observer_1}-\eqref{eq:observer_2} have different initial data, since usually only an approximation of the initial data of the exact solution is known.
In the first part of this section, we consider the observer system on a single pipe with length $\ell$ and
complement the equations for the original system and the observer system by the boundary conditions
\begin{align}\label{eq:BC}
	m(t,0)=\hat m(t,0)=m_b(t), \qquad
	h(t,\ell)=\hat h(t,\ell)=h_b(t), \qquad 0\le t\le T,
\end{align}
for the mass flow $m$ at the left end of the pipe and the total specific enthalpy $h$ at the right end of the pipe.

For the proof of exponential synchronization
we use
the following assumptions:
\begin{itemize}
	\item[(A1)] \textbf{Bounded state solution of original system:}
	For the system \eqref{eq:system_1}-\eqref{eq:system_2} together with the boundary conditions \eqref{eq:BC} there exists a solution
	\begin{align*} 
		(\rho,v) \in\big(C^0([0,T];W^{1,\infty}(0,\ell))\cap W^{1,\infty}(0,T;L^{\infty}(0,\ell)) \big)^2
	\end{align*}
	that satisfies \eqref{eq:system_1}-\eqref{eq:system_2} in a pointwise sense a.e. 
	and there exist $\ubar\rho, \bar \rho, \tilde v>0$ so that
	\begin{align*} 
	0 < \underline{\rho}\le \rho (t,x)\le \bar \rho
	\qquad \text{and} \qquad 
	-\tilde v \le v(t,x) \le \tilde v
	\end{align*}
	for all $0 \le x \le \ell$ and $0 \le t \le T$.
	\item[(A2)] \textbf{Subsonic condition:} The velocity bound $\tilde v$ satisfies
	$$
	\rho P''(\rho)\ge 4|\tilde v|^2 \quad \forall \  \underline{\rho} \leq \rho \leq \overline{\rho}.
	$$	
	\item[(A3)] \textbf{Bounded state solution of observer system:}
	The observer system \eqref{eq:observer_1}-\eqref{eq:observer_2} with the boundary conditions \eqref{eq:BC} admits a solution
	\begin{align*} 
		(\hat \rho, \hat v) \in\big(C^0([0,T];W^{1,\infty}(0,\ell))\cap W^{1,\infty}(0,T;L^{\infty}(0,\ell)) \big)^2
	\end{align*}
	that satisfies 
	\begin{align*} 
	0 < \underline{\rho}\le  \hat \rho (t,x) \le \bar \rho
	\qquad \text{and} \qquad 
	-\tilde v \le  \hat v(t,x) \le \tilde v
	\end{align*}
	for all $0 \le x \le \ell$ and $0 \le t \le T$.
	\item[(A4)]
	\textbf{Smallness-condition:}
	Assume that both solutions satisfy the following a priori bounds
	\begin{align*}
		\| \dt \rho\|_{L^\infty(0,T;L^\infty(0,\ell^e))} + \| \dt v\|_{L^\infty(0,T;L^\infty(0,\ell^e))} &\le C_t, \\
		\|v\|_{L^\infty(0,T;L^\infty(0,\ell^e))}, \|\hat v\|_{L^\infty(0,T;L^\infty(0,\ell^e))} &\le \bar v,
	\end{align*}
	for $C_t, \bar v>0$ sufficiently small.
	This means that both solutions have small velocity and the exact solution has sufficiently small time derivatives.
\end{itemize}
These assumptions are reasonable in practice, since in the usual operational range of gas pipes the gas velocities are small and only slow changes in the gas flow occur.

\begin{remark}
	The existence result stated in Section \ref{sec:existence} shows the existence of solutions of the original system and, for measurements of velocity or density, of the observer system satisfying the assumptions (A1)-(A4),
	provided that the initial and boundary conditions satisfy the assumptions of Theorem~\ref{thm:localExistence} for sufficiently small constants $B_{\text{max}}, L_I, S_{\text{max}}, L_R>0$.
	This can be seen as follows. 
	Theorem \ref{thm:localExistence} states the existence of a solution $S_{\pm,e}\in C^0([0,T]\times[0,\ell^e])$ of the observer system that satisfies the bound $|S_\pm|\le S_{\text{max}}$ for a constant $S_{\text{max}}>0$ and is
	Lipschitz-continuous with respect to $x$, uniformly in time. 
	This implies $S_{\pm,e}\in C^0([0,T];W^{1,\infty}(0,\ell^e))$ 
	and therefore also $(\hat\rho,\hat v)\in C^0([0,T];W^{1,\infty}(0,\ell^e))$. 
	In particular, this means that $\hat\rho,\, \hat v$  are differentiable a.e. in $(0,\ell^e)$ with respect to $x$ (cf. Rademacher's Theorem). 
	In addition, the bound $|S_\pm|\le S_{\text{max}}$
	yields the bounds 
	\begin{align} \label{eq:bounds_rho_v}
	|\hat v|\le c S_{\max}=:\bar v,\quad
	0<\ubar\rho:=\tilde P^{-1}(-S_{\max})\le \hat\rho\le \tilde P^{-1}(S_{\max})=:\bar{\rho}
	\end{align}
	for $\hat \rho$ and $\hat v$ (and similar for $\rho$, $v$) if $S_{\text{max}}$ is sufficiently small, see Remark \ref{rem:bounds_rho-v}. 
	Using these bounds as well 
	the equations \eqref{eq:observer_1}-\eqref{eq:observer_2} describing the observer system, we can show $ \dt\hat{\rho}, \dt\hat v\in  L^\infty(0,T;L^\infty(0,\ell^e))$, i.e., 
	\begin{align*}
		(\hat\rho,\hat v)\in C^0([0,T];W^{1,\infty}(0,\ell^e))\cap W^{1,\infty}(0,T;L^{\infty}(0,\ell^e)).
	\end{align*} 
	Analogously we can show that the solution $(\rho, v)$ of the original system is in the same space.
	In addition, \eqref{eq:bounds_rho_v} implies that for $S_{\text{max}}$ sufficiently small Theorem \ref{thm:localExistence} yields the bounds on $\rho,\, v, \, \hat \rho,\, \hat v$ that are required in (A1) and (A3) for the synchronization.
	Since $|v|,\, |\hat v|\le c S_{\text{max}}$, the subsonic condition (A2) is also satisfied for $S_{\text{max}}$ sufficiently small. 
	Finally, using the equations \eqref{eq:system_1}-\eqref{eq:system_2} we can see that
	\begin{align*}
	\| \dt \rho\|_{L^\infty(0,T;L^\infty(0,\ell^e))} + \| \dt v\|_{L^\infty(0,T;L^\infty(0,\ell^e))} \le C(S_{\text{max}}) \, L_R + \tilde C \, S_{\max}^2
	\end{align*}
	with a constant $C(S_{\text{max}})>0$ depending on $S_{\text{max}}$ and a constant $\tilde C>0$ (depending on $\gamma$ and $c$), i.e., the smallness-condition (A4) is satisfied if $L_R$ and $S_{\text{max}}$ are sufficiently small. 
	
	To summarize, as in the proof of Theorem \ref{thm:semi-global_existence} we can show that for given $S_{\text{max}},\, L_R>0$, we can choose $B_{\text{max}},\, L_I>0$ sufficiently small such that,
	if the initial data are bounded by $B_{\text{max}}$, the boundary data satisfy the bounds \eqref{eq:BC_bound_mb}-\eqref{eq:BC_bound_hb} and the initial and boundary data are Lipschitz-continuous with Lipschitz-constant bounded by $L_I$,
	then  there exists a solution ${S_{\pm,e}\in C^0([0,T]\times[0,\ell^e])}$ of \eqref{eq:observer_1}-\eqref{eq:observer_2} (or \eqref{eq:system_1}-\eqref{eq:system_2}) satisfying the initial conditions \eqref{eq:obs_RI_IC} (that can be translated into conditions for $\rho_0, v_0$) and the boundary conditions \eqref{eq:BC}.
	The solution is bounded by $S_{\text{max}}$
	and is Lipschitz-continuous with respect to $x$ with Lipschitz-constant bounded by $L_R$.
	In other words, if the initial and boundary data satisfy suitable smallness and smoothness assumptions, then there exist solutions that satisfy the bounds in (A1)-(A4).
\end{remark}

Now we can state the main theorem of this section.
\begin{thm}\label{thm:ExpSynchronization}
	Consider a solution $u=(\rho, v)$  of \eqref{eq:system_1}-\eqref{eq:system_2} and a solution $\hat u=(\hat \rho, \hat v)$ of \eqref{eq:observer_1}-\eqref{eq:observer_2} with observer terms given by \eqref{eq:observer-term_v}, \eqref{eq:observer-term_rho} or \eqref{eq:observer-term_m} satisfying the boundary conditions \eqref{eq:BC}, the properties stated in (A1)-(A3) and the bounds in (A4) with $C_t, \bar v>0$ sufficiently small.
	
	Then there exist constants $C_1, C_2>0$ such that
	\begin{align*}
	\| u(t,\cdot)-\hat u(t,\cdot)\|_{L^2(0,\ell)} \le C_1 
	\| u_0-\hat u_0\|_{L^2(0,\ell)} \e^{-C_2t}
	\end{align*}
	for all $0\le t\le T$.
\end{thm}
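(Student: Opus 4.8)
The plan is to measure the discrepancy $u-\hat u$ by a relative energy and then to restore the dissipation that is missing in the \emph{unmeasured} field by adding an auxiliary cross‑functional, following the strategy announced in the introduction and inspired by the energy extension of \cite{EggerKugler2018,Zuazua1988}. With $\mathfrak{e}(\rho,v):=\tfrac12\rho v^2+P(\rho)$ the energy density of \eqref{eq:system_1}--\eqref{eq:system_2}, introduce the relative energy of $\hat u$ with respect to the original solution $u$,
\begin{align*}
	\mathcal{R}(t)&:=\int_0^\ell\Big(\mathfrak{e}(\hat\rho,\hat v)-\mathfrak{e}(\rho,v)-\partial_\rho\mathfrak{e}(\rho,v)(\hat\rho-\rho)-\partial_v\mathfrak{e}(\rho,v)(\hat v-v)\Big)\,dx\\
	&=\int_0^\ell\Big(\tfrac12\hat\rho|\hat v-v|^2+v(\hat v-v)(\hat\rho-\rho)+P(\hat\rho)-P(\rho)-P'(\rho)(\hat\rho-\rho)\Big)\,dx.
\end{align*}
Linearising around $u$ is the right choice, because the residuals produced when $\mathcal{R}$ is differentiated then carry factors of $\partial_t(\rho,v)$, which are small by (A4). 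Using the density bounds of (A1),(A3), strong convexity $P''\ge\ubar C_{P''}$, and the subsonic condition (A2) — which makes the quadratic form $\tfrac12\hat\rho X^2+vXY+\tfrac12P''(\cdot)Y^2$ uniformly positive definite — one obtains constants $0<c_-\le c_+$ with $c_-\|u(t,\cdot)-\hat u(t,\cdot)\|_{L^2(0,\ell)}^2\le\mathcal{R}(t)\le c_+\|u(t,\cdot)-\hat u(t,\cdot)\|_{L^2(0,\ell)}^2$.

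Next I would differentiate $\mathcal{R}$ along the two flows. Inserting \eqref{eq:system_1}--\eqref{eq:system_2} and \eqref{eq:observer_1}--\eqref{eq:observer_2} and integrating by parts, the advective part collapses to a boundary term which is nonpositive by the matching data \eqref{eq:BC} ($m=\hat m$ at $x=0$, $h=\hat h$ at $x=\ell$ — precisely the reason these boundary conditions are imposed), the friction source is nonpositive up to a residual of size $\le C\bar v^2\|u-\hat u\|_{L^2(0,\ell)}^2$ by monotonicity of $s\mapsto|s|s$, and the Luenberger source produces a term dissipative in the measured field only: $\simeq-\mu\int_0^\ell\hat\rho|v-\hat v|^2$ for velocity or mass‑flow measurement, and, using the coefficient bounds $\tfrac{\ubar\rho}{\bar\rho}\tfrac{\sqrt{\ubar C_{p'}}}{\sqrt{\bar C_{p'}}}\le\tfrac{\hat\rho}{\tilde\rho}\tfrac{\sqrt{p'(\tilde\rho)}}{\sqrt{p'(\hat\rho)}}\le\tfrac{\bar\rho}{\ubar\rho}\tfrac{\sqrt{\bar C_{p'}}}{\sqrt{\ubar C_{p'}}}$ from Section~\ref{sec:GeneralSetup}, $\le-\kappa\|\rho-\hat\rho\|_{L^2(0,\ell)}^2$ for density measurement. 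All remaining terms are quadratic in $u-\hat u$ with prefactors controlled by $\bar v$, $C_t$ and the (small) spatial Lipschitz constants of $u,\hat u$, so
\begin{align}\label{eq:plan-dR}
	\frac{d}{dt}\mathcal{R}(t)\le-\kappa\,\|q_\star(t)\|_{L^2(0,\ell)}^2+C_0(\bar v+C_t)\,\mathcal{R}(t),
\end{align}
where $q_\star$ denotes the difference of the measured field; this gives exponential decay of $q_\star$ but says nothing yet about the unmeasured field $q_\sharp$.

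To close the estimate I would add an auxiliary functional $\mathcal{G}(t):=\int_0^\ell\theta(x)\,q_\star(t,x)\,\Phi(t,x)\,dx$, where $\Phi(t,x)$ is a primitive of $q_\sharp(t,\cdot)$ (normalised through a boundary point) and $\theta$ a smooth weight of definite sign tuned to the geometry. The point is that the measured‑equation flux contains the unmeasured field ($h-\hat h$ contains $P'(\rho)-P'(\hat\rho)$ for velocity measurement, symmetrically in the other cases), so differentiating $\mathcal{G}$ and integrating by parts produces a term $\int_0^\ell\theta\,P''(\cdot)\,q_\sharp^2$ which, for $\theta$ of the right sign, is $\le-\beta\|q_\sharp(t)\|_{L^2(0,\ell)}^2$; the emerging boundary terms are handled with the trace relations forced by \eqref{eq:BC} (at $x=0$, $m=\hat m$ forces $q_\star=\mathcal{O}(\bar v)\,q_\sharp$, and at $x=\ell$, $h=\hat h$ forces $q_\sharp=\mathcal{O}(\bar v)\,q_\star$), $\partial_t\Phi$ is rewritten by the other equation and the boundary data, and the friction and residual terms again carry small prefactors, yielding
\begin{align}\label{eq:plan-dG}
	\frac{d}{dt}\mathcal{G}(t)\le-\beta\,\|q_\sharp(t)\|_{L^2(0,\ell)}^2+C_1\,\|q_\star(t)\|_{L^2(0,\ell)}^2+C_2(\bar v+C_t)\,\mathcal{R}(t).
\end{align}
Setting $\mathcal{V}(t):=\mathcal{R}(t)+\varepsilon\,\mathcal{G}(t)$ and noting $|\mathcal{G}(t)|\le C\|q_\star\|_{L^2}\|\Phi\|_{L^2}\le C\ell\,\mathcal{R}(t)$, for $\varepsilon$ small $\mathcal{V}$ is equivalent to $\mathcal{R}$, hence to $\|u-\hat u\|_{L^2(0,\ell)}^2$; adding \eqref{eq:plan-dR} and $\varepsilon$ times \eqref{eq:plan-dG} with $\varepsilon C_1\le\tfrac\kappa2$, the measured‑variable terms combine to $-\tfrac\kappa2\|q_\star\|_{L^2}^2$, which with $-\varepsilon\beta\|q_\sharp\|_{L^2}^2$ dominates $-c_3\mathcal{R}(t)$; taking $\bar v,C_t$ so small that $(C_0+\varepsilon C_2)(\bar v+C_t)\le\tfrac{c_3}{2}$ gives $\frac{d}{dt}\mathcal{V}(t)\le-\tfrac{c_3}{2}\mathcal{R}(t)\le-c_4\mathcal{V}(t)$, and Grönwall's inequality together with the equivalences yields the claim with $C_2=c_4/2$.

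The step I expect to be the main obstacle is the construction of $\mathcal{G}$: choosing the weight $\theta$ and the normalisation of $\Phi$ so that \emph{every} boundary term produced by the integrations by parts either cancels against the matching data \eqref{eq:BC} or carries the right sign, while keeping the spurious $\|q_\star\|_{L^2}^2$ generated by $\frac{d}{dt}\mathcal{G}$ small enough to be absorbed by the strong measured‑variable dissipation of \eqref{eq:plan-dR}. Controlling the genuinely nonlinear residuals, and in the density‑measurement case the fact that $\mathcal L_\rho$ is not exactly in Luenberger form, is what makes the smallness hypotheses (A2) and (A4) and the coefficient bounds of Section~\ref{sec:GeneralSetup} indispensable; the star‑shaped‑network version would then be obtained from the same Lyapunov functional summed over the edges, the coupling conditions \eqref{eq:couplingc} contributing nonpositive node terms as in \cite{EggerGiesselmann}.
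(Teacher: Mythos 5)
Your proposal follows essentially the same route as the paper: the relative energy (equivalent to the squared $L^2$-distance under (A1)--(A3)), its time derivative yielding dissipation only in the measured field via the Luenberger term plus residuals controlled by $\bar v$ and $C_t$, an auxiliary cross-functional pairing the measured-field difference with a boundary-normalised primitive of the unmeasured one, and a Grönwall argument for the combined Lyapunov functional. The construction you flag as ``the main obstacle'' is exactly the paper's explicit choice $\F_1=\int_0^\ell (M-\hat M)(v-\hat v)\,dx$ (resp.\ $\F_2$ for density measurements), where $M$ is built from the time-integrated flux so that $M-\hat M$ vanishes at $x=0$ and, since $h=\hat h$ at $x=\ell$, all boundary terms in the integration by parts vanish identically --- no weight $\theta$ or sign tuning is required.
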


\begin{remark}
	Note that we have the bound $\tilde v$ on the velocity, which asserts that we have subsonic flow, see assumption (A2), and in addition the bound $\bar v\le \tilde v$ in Theorem \ref{thm:ExpSynchronization} ensures that the velocity is sufficiently small in order to control certain terms in our analysis.
\end{remark}

In other words, Theorem \ref{thm:ExpSynchronization} asserts that the state of the observer system converges exponentially towards the exact solution, provided that the time derivative of the exact solution and the velocity in both systems is sufficiently small.
The rest of this section is devoted to the proof of this theorem and to the extension to star-shaped networks.

\subsection{General strategy of the proof of Theorem \ref{thm:ExpSynchronization}}
In order to measure the distance between the exact solution and the solution of the observer system, we will use the 
\emph{relative energy}, which is defined by
\begin{equation*}
	\H(\hat u|u):= \H(\hat u)-\H(u)-\langle \H'(u),\hat u- u\rangle,
\end{equation*}
cf. \cite{Dafermos1979, EggerGiesselmann}. 
Here, $u=(\rho, v)$, $\hat u=(\hat \rho, \hat v)$ denote the solution of the original system and of the observer system, respectively,
$\langle \cdot, \cdot \rangle$ denotes the $L^2$-scalar product on $[0,\ell]$ and
$\H'(u)$ is the
variational derivative of the energy $\H$, see \eqref{eq:H},
that is given by
\begin{align}
	\delta_\rho \mathcal{H}(\rho, v)=\frac{1}{2} v^2 +P'(\rho) = h,\qquad
	\delta_v \mathcal{H}(\rho, v)= m. \label{eq:varDer_H}
\end{align}
Using these derivatives we can write the system \eqref{eq:system_1}-\eqref{eq:system_2} in the Hamiltonian form
\begin{equation} \label{eq:Hamiltonian_structure}
	\begin{pmatrix}
	\dt \rho\\ \dt v
	\end{pmatrix}
	+
	\begin{pmatrix}
		0 & \dx \\ \dx & 0
		\end{pmatrix}
	\begin{pmatrix}
	\delta_\rho \H \\ \delta_v \H	
	\end{pmatrix}
	=
	\begin{pmatrix}
	0 \\ -\gamma |v| v  
	\end{pmatrix},
\end{equation}
which will be crucial in the estimation of the time derivative of the relative energy.

Due to the convexity of the pressure potential $P$, the relative energy is positive and defines a distance measure on subsonic states. The proof of the following result can be found in \cite[Lemma 9]{EggerGiesselmann}.

\begin{lem} \label{lem:normequ}
	Under the assumptions (A1)--(A3) there exist constants $c_0, C_0>0$ such that
	\begin{align*}
		c_0 \| u_1-u_2\|_{L^2(0,\ell)}^2 \le \H(u_1 | u_2) \le C_0\| u_1-u_2\|_{L^2(0,\ell)}^2
	\end{align*}
	for all $u_1=(\rho_1, v_1), u_2=(\rho_2, v_2)\in L^2(0,\ell)$ with
	\begin{align*} 
		0 < \underline{\rho}\le \rho_1 (x),  \rho_2 (x) \le \bar \rho, \qquad 
		-\tilde v \le v_1(x), v_2(x) \le \tilde v,\qquad \forall x\in(0,\ell). 
	\end{align*}
\end{lem}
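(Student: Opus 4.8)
The plan is to reduce the asserted norm-equivalence to a pointwise statement about the energy density and then to read it off from uniform bounds on the Hessian of that density. Writing $\eta(\rho,v):=\tfrac12\rho v^2+P(\rho)$ for the energy density, so that $\mathcal{H}(\rho,v)=\int_0^\ell \eta(\rho,v)\,dx$, and using the variational derivatives recorded in \eqref{eq:varDer_H}, one sees that the integrand of the relative energy is exactly the second-order Taylor remainder (Bregman divergence) of $\eta$ expanded at the base state:
\[
\mathcal{H}(u_1\,|\,u_2)=\int_0^\ell\Big(\eta(u_1)-\eta(u_2)-\nabla\eta(u_2)\cdot(u_1-u_2)\Big)\,dx,
\]
where $\nabla\eta=(\tfrac12 v^2+P'(\rho),\,\rho v)$. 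It therefore suffices to bound the integrand above and below pointwise by a fixed multiple of $|u_1-u_2|^2=|\rho_1-\rho_2|^2+|v_1-v_2|^2$.

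First, I would express the remainder through the integral form of Taylor's theorem,
\[
\eta(u_1\,|\,u_2)=\int_0^1(1-\theta)\,(u_1-u_2)^{\top}\,\nabla^2\eta\big(u_2+\theta(u_1-u_2)\big)\,(u_1-u_2)\,d\theta.
\]
The decisive structural point is that the admissible set $K:=\{(\rho,v):\ubar\rho\le\rho\le\bar\rho,\ -\tilde v\le v\le\tilde v\}$ is convex, so whenever $u_1,u_2\in K$ every intermediate state $u_2+\theta(u_1-u_2)$ again lies in $K$. Consequently the quadratic form is evaluated only at admissible states, and the task reduces to controlling the eigenvalues of $\nabla^2\eta$ uniformly over $K$.

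Next, I would compute the Hessian explicitly,
\[
\nabla^2\eta(\rho,v)=\begin{pmatrix}P''(\rho)&v\\ v&\rho\end{pmatrix},
\]
and bound its eigenvalues on $K$. The trace is bounded by $P''(\rho)+\rho\le\bar{C}_{P''}+\bar\rho$, giving $\lambda_{\max}(\nabla^2\eta)\le\bar{C}_{P''}+\bar\rho$. For the lower bound I would use the determinant $\det\nabla^2\eta=\rho P''(\rho)-v^2$: the subsonic condition (A2) yields $\rho P''(\rho)\ge 4\tilde v^2\ge 4v^2$, hence $\det\nabla^2\eta\ge\tfrac34\rho P''(\rho)\ge\tfrac34\ubar\rho\,\ubar{C}_{P''}>0$ uniformly on $K$. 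Combining with the trace bound via $\lambda_{\min}\ge\det/\operatorname{tr}$ gives $\lambda_{\min}(\nabla^2\eta)\ge\tfrac34\ubar\rho\,\ubar{C}_{P''}/(\bar{C}_{P''}+\bar\rho)>0$. Since $\int_0^1(1-\theta)\,d\theta=\tfrac12$, the integral representation yields the pointwise two-sided estimate with $C_0:=\tfrac12(\bar{C}_{P''}+\bar\rho)$ and $c_0:=\tfrac38\ubar\rho\,\ubar{C}_{P''}/(\bar{C}_{P''}+\bar\rho)$; integrating over $[0,\ell]$ completes the proof.

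The only genuine obstacle is positivity of the determinant, i.e.\ the requirement $\rho P''(\rho)>v^2$, which is precisely subsonicity ($|v|$ below the sound speed $\sqrt{p'(\rho)}=\sqrt{\rho P''(\rho)}$). Assumption (A2) is tailored to enforce this with a uniform gap, the factor $4$ producing the margin $\tfrac34$ above; no smallness of the velocity beyond subsonicity is required for this lemma, and it is the convexity of $K$ that guarantees $\nabla^2\eta$ is never sampled outside the uniformly positive-definite regime.
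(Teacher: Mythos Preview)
Your proposal is correct and follows the standard relative-energy argument. Note that the paper itself does not give a proof of this lemma but simply cites \cite[Lemma~9]{EggerGiesselmann}; the argument there is essentially the one you wrote out---expressing the relative energy density as a Taylor remainder, computing the Hessian $\nabla^2\eta=\begin{pmatrix}P''(\rho)&v\\ v&\rho\end{pmatrix}$, and invoking the subsonic condition (A2) to ensure uniform positive definiteness on the convex admissible box.
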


The main idea of the proof of Theorem \ref{thm:ExpSynchronization} is to use an extension of the relative energy method, i.e., we estimate $\dt \H(\hat u|u)$ using a similar strategy as in \cite{EggerGiesselmann} and then, motivated by an extension of the energy functional used in \cite{EggerKugler2018}, we add a suitable functional to the relative energy in order to get decrease also with respect to the variables that are not measured.
As a first step, we estimate the time derivative of the relative energy.

\begin{lem} \label{lem:dtH}
	Under the assumptions of Theorem \ref{thm:ExpSynchronization} the time derivative of the relative energy can be bounded by
	\begin{equation} \label{eq:dtH}
	\begin{split}
		&\dt \H(\hat u|u)\le 
		\,2 \gamma  \left(\max\{\|v\|_{L^\infty([0,\ell])}, \|\hat v\|_{L^\infty([0,\ell])}  \}\right)^3 \tfrac{1}{\ubar \rho}  
		\|\rho-\hat \rho\|_{L^2([0,\ell]) }^2 \\
		&\quad +\tilde C \, (\| \dt \rho\|_{L^\infty([0,\ell])} + \| \dt v\|_{L^\infty([0,\ell])}) \, \|u-\hat u\|_{L^2([0,\ell]) }^2 \
		-\langle \mathcal{L}_\rho, h-\hat h\rangle  -\langle \mathcal{L}_v, m-\hat m\rangle 
	\end{split}
	\end{equation}
	for a.e. $t\in(0,T)$, where
	$\tilde C:= \max(\tfrac{1}{2},  C_{P'''} )$.
\end{lem}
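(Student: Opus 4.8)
The plan is to differentiate the relative energy and exploit the Hamiltonian structure \eqref{eq:Hamiltonian_structure}. Write $\H'(u)=(h,m)$ as in \eqref{eq:varDer_H}, set $J=\begin{pmatrix}0&\dx\\\dx&0\end{pmatrix}$, $f(u)=(0,-\gamma|v|v)$, $\mathcal L=(\mathcal L_\rho,\mathcal L_v)$, so that the original and observer systems read $\dt u+J\H'(u)=f(u)$ and $\dt\hat u+J\H'(\hat u)=f(\hat u)+\mathcal L$. Under (A1) and (A3) both solutions lie in $C^0([0,T];W^{1,\infty})\cap W^{1,\infty}(0,T;L^\infty)$, so the manipulations below are valid for a.e.\ $t$. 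Using $\dt\H(w)=\langle\H'(w),\dt w\rangle$ for the energy functional, a short computation gives, with $w:=\H'(\hat u)-\H'(u)=(\hat h-h,\hat m-m)$,
\[
\dt\H(\hat u|u)=\langle w,\dt\hat u\rangle-\langle\dt\H'(u),\hat u-u\rangle .
\]

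Next I would substitute $\dt\hat u=-J\H'(\hat u)+f(\hat u)+\mathcal L$ and rewrite $J\H'(\hat u)=Jw+J\H'(u)=Jw+f(u)-\dt u$. The advective contribution then reduces to the skew term $-\langle w,Jw\rangle=-[(\hat h-h)(\hat m-m)]_0^\ell$, which vanishes because the boundary conditions \eqref{eq:BC} force $\hat m=m$ at $x=0$ and $\hat h=h$ at $x=\ell$. This leaves the decomposition
\[
\dt\H(\hat u|u)=\underbrace{\big(\langle w,\dt u\rangle-\langle\dt\H'(u),\hat u-u\rangle\big)}_{T_1}+\underbrace{\langle w,f(\hat u)-f(u)\rangle}_{T_2}+\underbrace{\langle w,\mathcal L\rangle}_{T_3},
\]
and $T_3=\langle(\hat h-h,\hat m-m),(\mathcal L_\rho,\mathcal L_v)\rangle=-\langle\mathcal L_\rho,h-\hat h\rangle-\langle\mathcal L_v,m-\hat m\rangle$ is kept unchanged.

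For $T_1$, inserting $h=\tfrac12v^2+P'(\rho)$ and $m=\rho v$ and expanding, all terms without a time derivative cancel and the integrand collapses to $\tfrac12\dt\rho\,(\hat v-v)^2+\tfrac12 P'''(\tilde\rho)(\hat\rho-\rho)^2\dt\rho+\dt v\,(\hat v-v)(\hat\rho-\rho)$ with $\tilde\rho$ between $\rho$ and $\hat\rho$ (Taylor's theorem for $P'$). Using $|P'''|\le C_{P'''}$ and Young's inequality on the mixed last term, and collecting the coefficients of $\|\hat\rho-\rho\|_{L^2}^2$ and $\|\hat v-v\|_{L^2}^2$, gives $T_1\le\tilde C(\|\dt\rho\|_{L^\infty}+\|\dt v\|_{L^\infty})\,\|u-\hat u\|_{L^2}^2$ with $\tilde C=\max(\tfrac12,C_{P'''})$.

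The remaining term $T_2=-\gamma\int_0^\ell(\hat m-m)(|\hat v|\hat v-|v|v)\,dx$ is the main obstacle, since a naive estimate produces a $\|\hat v-v\|_{L^2}^2$ contribution with a coefficient depending on $\gamma$ and the velocity bounds, which is not allowed by \eqref{eq:dtH}. The remedy is to split $\hat m-m=\rho(\hat v-v)+\hat v(\hat\rho-\rho)$: the first piece yields $-\gamma\int\rho(\hat v-v)(|\hat v|\hat v-|v|v)\,dx\le0$ by monotonicity of $s\mapsto|s|s$, but one must \emph{retain} this dissipative term rather than discard it. Using $\bigl|\,|\hat v|\hat v-|v|v\,\bigr|\le2\max\{\|v\|_{L^\infty},\|\hat v\|_{L^\infty}\}|\hat v-v|$ together with a suitably weighted Young inequality, the $(\hat v-v)^2$-part of the cross piece $-\gamma\int\hat v(\hat\rho-\rho)(|\hat v|\hat v-|v|v)\,dx$ is absorbed by the dissipative term, and what survives is controlled by $\tfrac{2\gamma}{\ubar\rho}(\max\{\|v\|_{L^\infty},\|\hat v\|_{L^\infty}\})^3\|\hat\rho-\rho\|_{L^2}^2$ (using $\hat v^2\le(\max\{\cdot\})^2$ and $\rho\ge\ubar\rho$). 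Adding the estimates for $T_1$, $T_2$ and $T_3$ yields \eqref{eq:dtH}.
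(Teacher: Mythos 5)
Your proposal is correct and follows essentially the same route as the paper: differentiate the relative energy, use the Hamiltonian structure and the boundary conditions \eqref{eq:BC} to kill the skew/advective boundary term, keep the observer terms as they are, estimate the quadratic remainder $\langle \H'(\hat u)-\H'(u)-\H''(u)(\hat u-u),\dt u\rangle$ by $\tilde C(\|\dt\rho\|_{L^\infty}+\|\dt v\|_{L^\infty})\|u-\hat u\|_{L^2}^2$, and treat the friction term by splitting $\hat m-m$ and absorbing the cross part into the retained dissipative part via a weighted Young inequality (the paper splits with the roles of $u$ and $\hat u$ interchanged, which is immaterial). One small upgrade to make explicit: for the absorption you need the quantitative dissipation $(\hat v-v)\bigl(|\hat v|\hat v-|v|v\bigr)\ge \tfrac12(|v|+|\hat v|)(\hat v-v)^2$ (from the integral representation of $|a|a-|b|b$, as in the paper), not merely the sign from monotonicity, and you should keep the pointwise factor $|\hat v|$ in the cross piece so that the surviving coefficient is bounded by $2\gamma\max\{\|v\|_{L^\infty},\|\hat v\|_{L^\infty}\}^3/\ubar\rho$.
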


\begin{proof}
	Using the definition of the relative energy, we can compute
	\begin{align*}
		\dt \H(\hat u|u)= &
		\langle \H'(\hat u), \dt \hat u\rangle - \langle \H'(u), \dt u\rangle
		-\langle \H''(u)\dt u, \hat u- u\rangle
		-\langle \H'(u), \dt \hat u -\dt u \rangle\\
		= & \langle\H'(\hat u)- \H'(u), \dt \hat u -\dt u \rangle
		 + \langle \H'(\hat u)-\H'(u)-\H''(u)(\hat u -u), \dt u\rangle.
	\end{align*}
	Now we can use that the variational derivatives of the energy $\H$ are given by \eqref{eq:varDer_H}.
	Together with 
	the equations \eqref{eq:system_1}--\eqref{eq:system_2} and \eqref{eq:observer_1}--\eqref{eq:observer_2}, this leads to
	\begin{align}
	\dt \H(\hat u|u)
		= & \langle h-\hat h, \dt \rho- \dt\hat \rho \rangle
			+ \langle m-\hat m, \dt v-\dt \hat v\rangle
			+ \langle \H'(\hat u)-\H'(u)-\H''(u)(\hat u -u), \dt u\rangle\notag\\
		= & - \langle m-\hat m, \gamma |v| v-\gamma |\hat v| \hat v\rangle 
			+ \langle \H'(\hat u)-\H'(u)-\H''(u)(\hat u -u), \dt u\rangle \label{eq:dtH2} \\
			&-\langle \mathcal{L}_\rho, h-\hat h\rangle  -\langle \mathcal{L}_v, m-\hat m\rangle ,  \notag
	\end{align}
	where we have used
	\begin{align*}
	    \langle h-\hat h, \dx \hat m - \dx m\rangle + \langle\dx  h- \dx \hat h,  \hat m - m\rangle
	    = (h-\hat h)(\hat m-m)\vert_{x=0}^\ell =0
	\end{align*}
	due to the boundary conditions for $m$ and $h$. 
	For the first term we note that $ |v| v-|\hat v| \hat v =2(v-\hat v) \int_0^1 |\hat v +s (v-\hat v)| ds  $ with 
	\begin{align*}
		\tfrac{1}{4} (|\hat v | + |v|) \le \int_0^1 |\hat v +s (v-\hat v)| ds \le \tfrac{1}{2} (|\hat v | + |v|),
	\end{align*}
	compare \cite{EggerGiesselmann}. This implies
	\begin{align*}
		&- \langle m-\hat m, \gamma |v| v-\gamma |\hat v| \hat v\rangle 
		\le \int_0^\ell \gamma  |v| |\rho-\hat \rho| |v-\hat v| (|v|+|\hat v|) dx
			- \int_0^\ell \tfrac{1}{2} \gamma  \hat \rho (v-\hat v)^2  (|v|+|\hat v|) dx\\
		&\quad\le 
		 2 \gamma \left(\max\{\|v\|_{L^\infty([0,\ell])}, \|\hat v\|_{L^\infty([0,\ell])}  \}\right)^3 \tfrac{1}{\ubar \rho}
			\|\rho-\hat \rho\|_{L^2([0,\ell])}^2  
			-\int_0^\ell \tfrac{1}{4} \gamma \hat \rho  (|v| +|\hat v|)(v-\hat v)^2 dx. 
	\end{align*}
	where the second term in the last line is nonpositive.
	Since 
	\begin{align*}
		\H'(\hat u)-\H'(u)-\H''(u)(\hat u -u) =\begin{pmatrix}
		P'(\hat \rho|\rho ) + \tfrac{1}{2} (\hat v- v^2 )\\
		(\hat \rho-\rho)(\hat v -v)
		\end{pmatrix},
	\end{align*}
	the second term in \eqref{eq:dtH2} can be estimated by
	\begin{align*}
		&\langle \H'(\hat u)-\H'(u)-\H''(u)(\hat u -u), \dt u\rangle\\
		&\le \max(\tfrac{1}{2}, C_{P'''} ) \, (\| \dt \rho\|_{L^\infty([0,\ell])} + \| \dt v\|_{L^\infty([0,\ell])}) \, \|u-\hat u\|_{L^2([0,\ell]) }^2. \qedhere
	\end{align*}
\end{proof}

We expect that the observer terms $-\langle \mathcal{L}_\rho, h-\hat h\rangle  -\langle \mathcal{L}_v, m-\hat m\rangle$ in the estimate of $\dt \H(\hat u|u)$ lead to decrease with respect to the measured variables. In order to get decrease also with respect to the variables that are not measured, we introduce an additional functional $\G$ (cf. the extension of the energy functional in \cite{EggerKugler2018}).

\begin{lem} \label{lem:dtH_2}
	Assume that the assumptions of Theorem \ref{thm:ExpSynchronization} are satisfied and assume that 
	there are observer terms $\mathcal{L}_\rho, \mathcal{L}_v$ and a functional $\mathcal{G}$ 
	so that
	\begin{align}\label{eq:normeq-HG}
	\tfrac{c_0}{2}\| u(t,\cdot)-\hat u(t,\cdot)\|_{L^2(0,\ell)}^2
	\le \H(\hat u|u)(t)+ \G(\hat u|u)(t)
	\le \tfrac{3}{2} C_0 \| u(t,\cdot)-\hat u(t,\cdot)\|_{L^2(0,\ell)}^2  
	\end{align}
	and there exists a constant $\bar c>0$ so that
	\begin{equation}\label{eq:estimatecH}
	\begin{split}
	&-\langle \mathcal{L}_\rho(t,\cdot), h(t,\cdot)-\hat h(t,\cdot)\rangle  -\langle \mathcal{L}_v(t,\cdot), m(t,\cdot)-\hat m(t,\cdot)\rangle   + \partial_t \G(\hat u|u)(t)  \\
	&\leq - \bar c \left(\H(\hat u|u)(t)+ \G(\hat u|u)(t)\right)
	\end{split}
	\end{equation}
	for a.e. $0\le t\le T$.

	Then there exist constants $C_1, C_2>0$ such that
	\begin{align*}
	\| u(t,\cdot )-\hat u(t,\cdot) \|_{L^2(0,\ell)} \le C_1 
	\| u_0-\hat u_0\|_{L^2(0,\ell)} \e^{-C_2t}.
	\end{align*}
\end{lem}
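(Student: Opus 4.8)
The plan is a Grönwall argument for the augmented functional $\mathcal{D}(t):=\H(\hat u|u)(t)+\G(\hat u|u)(t)$, which by the left-hand inequality in \eqref{eq:normeq-HG} controls $\|u(t,\cdot)-\hat u(t,\cdot)\|_{L^2(0,\ell)}^2$ and by the right-hand inequality is controlled by it. Since $(\rho,v)$ and $(\hat\rho,\hat v)$ lie in $C^0([0,T];W^{1,\infty}(0,\ell))\cap W^{1,\infty}(0,T;L^\infty(0,\ell))$ by (A1) and (A3), the functions $t\mapsto\H(\hat u|u)(t)$ and $t\mapsto\G(\hat u|u)(t)$ are absolutely continuous, so it suffices to establish a differential inequality for $\mathcal{D}$ that holds for a.e.\ $t$ and then integrate. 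First I would differentiate $\mathcal{D}$ and insert the bound \eqref{eq:dtH} from Lemma \ref{lem:dtH} for $\dt\H(\hat u|u)$; the two observer-dependent terms $-\langle\mathcal{L}_\rho,h-\hat h\rangle-\langle\mathcal{L}_v,m-\hat m\rangle$ produced there combine with $\partial_t\G(\hat u|u)$, and hypothesis \eqref{eq:estimatecH} bounds exactly this combination by $-\bar c\,\mathcal{D}(t)$. This leaves only the friction contribution and the time-derivative contribution from \eqref{eq:dtH} to be absorbed.

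For those leftover terms I would use the smallness assumption (A4): $\|v\|_{L^\infty(0,\ell)},\|\hat v\|_{L^\infty(0,\ell)}\le\bar v$ and $\|\dt\rho\|_{L^\infty(0,\ell)}+\|\dt v\|_{L^\infty(0,\ell)}\le C_t$, together with $\|\rho-\hat\rho\|_{L^2(0,\ell)}\le\|u-\hat u\|_{L^2(0,\ell)}$, to estimate them by $\big(\tfrac{2\gamma\bar v^3}{\ubar\rho}+\tilde C C_t\big)\|u-\hat u\|_{L^2(0,\ell)}^2$, and then invoke the left inequality in \eqref{eq:normeq-HG} once more in the form $\|u-\hat u\|_{L^2(0,\ell)}^2\le\tfrac{2}{c_0}\mathcal{D}(t)$. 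Putting the pieces together yields
\begin{align*}
\tfrac{d}{dt}\mathcal{D}(t)\le\Big(-\bar c+\tfrac{2}{c_0}\Big(\tfrac{2\gamma\bar v^3}{\ubar\rho}+\tilde C C_t\Big)\Big)\,\mathcal{D}(t)\qquad\text{for a.e. }t\in(0,T).
\end{align*}
Choosing $\bar v$ and $C_t$ small enough that $\tfrac{2}{c_0}\big(\tfrac{2\gamma\bar v^3}{\ubar\rho}+\tilde C C_t\big)\le\tfrac{\bar c}{2}$ --- which is precisely the smallness of $\bar v,C_t$ required in the hypotheses of Theorem \ref{thm:ExpSynchronization} --- I set $C_2:=\bar c/4$ and obtain $\tfrac{d}{dt}\mathcal{D}(t)\le-2C_2\,\mathcal{D}(t)$. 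The integral form of Grönwall's inequality (used in integral form precisely because $\mathcal{D}$ is only absolutely continuous) then gives $\mathcal{D}(t)\le\mathcal{D}(0)\,\e^{-2C_2 t}$.

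Finally I would close the loop with \eqref{eq:normeq-HG} evaluated at $t$ and at $0$:
\begin{align*}
\tfrac{c_0}{2}\|u(t,\cdot)-\hat u(t,\cdot)\|_{L^2(0,\ell)}^2\le\mathcal{D}(t)\le\mathcal{D}(0)\,\e^{-2C_2 t}\le\tfrac{3}{2}C_0\,\|u_0-\hat u_0\|_{L^2(0,\ell)}^2\,\e^{-2C_2 t},
\end{align*}
so taking square roots yields the assertion with $C_1=\sqrt{3C_0/c_0}$ and $C_2=\bar c/4$. The only genuinely delicate point here is the low temporal regularity: one must work with the a.e.-differential inequality and its integrated version rather than with a classical ODE comparison principle. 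Beyond that, this lemma is essentially a bookkeeping step, since all the substance --- the explicit construction of the functional $\G$ and the verification of \eqref{eq:normeq-HG} and \eqref{eq:estimatecH} for each of the three measurement cases (velocity, density, mass flow), which is where the genuine difficulty lies --- is carried out separately.
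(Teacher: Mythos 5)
Your proposal is correct and follows essentially the same route as the paper: combine the bound of Lemma \ref{lem:dtH} with hypothesis \eqref{eq:estimatecH}, absorb the friction and time-derivative remainders into $-\bar c(\H+\G)$ using the norm equivalence \eqref{eq:normeq-HG} and the smallness of $\bar v$ and $C_t$ from (A4), and conclude by Gronwall plus \eqref{eq:normeq-HG} at times $t$ and $0$. Your explicit tracking of the constants and of the a.e./integral form of Gronwall is a slightly more careful write-up of the same argument.
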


\begin{proof}
    Combining \eqref{eq:estimatecH} with Lemma \ref{lem:dtH} yields
    \begin{align*}
        \dt &\H(\hat u|u)(t)+\dt \G(\hat u|u)(t)\\
        \le &\,2 \gamma  \tfrac{\bar v^3}{\ubar\rho}\,
		\Lnorm{\rho(t,\cdot)-\hat \rho(t, \cdot)}
		+\tilde C C_t \|u(t,\cdot)-\hat u(t,\cdot)\|_{L^2 }^2 
		- \bar c \left(\H(\hat u|u)(t)+ \G(\hat u|u)(t)\right).
    \end{align*}
    By Lemma \ref{lem:normequ}, the first and second term can be absorbed into the last term for sufficiently small $C_t, \bar v>0$. This yields
    \begin{align*}
       \dt \H(\hat u|u)(t)+\dt \G(\hat u|u)(t)
       \le -C \left(\H(\hat u|u)(t)+ \G(\hat u|u)(t)\right)
    \end{align*}
    for some $C>0$.
    Then, using \eqref{eq:normeq-HG} and the Gronwall Lemma shows the assertion.
\end{proof}

\begin{remark}
		Note that $(\rho, v),\, (\hat \rho, \hat v) \in\big(C^0([0,T];W^{1,\infty}(0,\ell))\cap W^{1,\infty}(0,T;L^{\infty}(0,\ell)) \big)^2$, i.e., the derivatives $\dt \H(\hat u|u)(t)$ etc. in Lemma \ref{lem:dtH} and Lemma \ref{lem:dtH_2} are defined in an a.e.-sense.
		In particular, inequality \eqref{eq:dtH} holds for a.e. $t\in(0,T)$.
		The Gronwall Lemma can still be applied in this setting.
\end{remark}

\begin{remark}
		We expect that the exponential convergence stated in Lemma \ref{lem:dtH_2} can be generalized to other $2\times2$--systems of hyperbolic conservation laws
		that have a Hamiltonian structure similar to \eqref{eq:Hamiltonian_structure} with convex energy functional.
		In this case, one would have to find a suitable functional~$\G$ and check conditions similar to the conditions required in Lemma~\ref{lem:dtH} and Lemma~\ref{lem:dtH_2}. However, it is not clear to us how to find such a functional $\G$ in the general case.
		
		The analysis in this paper is based on the relative energy method.
		This is similar to the convergence analysis of numerical schemes as in \cite{Egger2023_RelativeEnergysiDiscrete}.	
		Thus, it is interesting to combine these techniques to show synchronization of discretized observers. 
		However, this is not straightforward and beyond the scope of the current paper.
		
\end{remark}

In the following, we state possible functionals $\G$ that allow to verify the conditions \eqref{eq:normeq-HG} and \eqref{eq:estimatecH} for the observer terms \eqref{eq:observer-term_v}, \eqref{eq:observer-term_rho} and \eqref{eq:observer-term_m} corresponding to 
measurements of $m$, $v$ or $\rho$, respectively. For velocity measurements we present the proof in detail, while for density and mass flow measurements we just present the main idea of the proof.
Once it is shown that the conditions \eqref{eq:normeq-HG} and \eqref{eq:estimatecH} are satisfied, Theorem~\ref{thm:ExpSynchronization} follows directly from Lemma~\ref{lem:dtH_2}.

\subsection{Proof for measurement of $v$} \label{sec:meas_v}

First, we show exponential convergence of the state of the observer system towards the original system state for measurement of the velocity.
In this case, we use the observer terms
\begin{align*}
	\mathcal{L}_\rho=0, \qquad \mathcal{L}_v=\mu  (v-\hat v)
\end{align*}
with $\mu>0$.
Under the assumption that $	|v|, |\hat v| \le \bar v$, we can estimate 
\begin{equation} \label{eq:estimate_Rr_Rv_v}
\begin{split}
	-\langle \mathcal{L}_\rho, h-\hat h\rangle  -\langle \mathcal{L}_v, m-\hat m\rangle  
	&\le -\mu  \ubar \rho  \Lnorm{v-\hat v} -\langle \mu (v-\hat v), \hat v  (\rho-\hat \rho)\rangle\\
	&\le -\tfrac{1}{2} \mu \ubar \rho  \Lnorm{v-\hat v}
	+\tfrac{1}{2} \mu \frac{\bar v^2}{\ubar \rho} \Lnorm{\rho-\hat\rho}.
\end{split}
\end{equation}
In order to show decrease with respect to $\Lnorm{\rho-\hat\rho}$, we use the following additional functional, where the construction of $\F_1$ is inspired by the construction of the additional functional used for a linear problem in \cite{EggerKugler2018}.

\begin{definition} \label{def:G}
	Let
	\begin{align*}
		\F_1(\hat u|u)(t):=\int_0^\ell (M(x,t)-\hat M(x,t)) (v(x,t)-\hat v(x,t)) dx
	\end{align*}
	with
	\begin{align*}
		M(x,t):=\int_0^t m(x, t') dt' - \int_0^x \rho_0(x') dx'.
	\end{align*}
\end{definition}

The strategy for the proof of Theorem \ref{thm:ExpSynchronization} in the case of measurement of $v$ is that we will estimate
${\dt \H(\hat u|u) +\delta \dt \F_1(\hat u|u)}$ with a constant $\delta>0$ and show that $\dt \F_1(\hat u|u)$ guarantees decrease with respect to
${\|\rho-\hat \rho\|_{L^2(0,\ell)}^2}$,
i.e., we will use Lemma \ref{lem:dtH_2} and verify the conditions \eqref{eq:normeq-HG} and \eqref{eq:estimatecH} for $\G:=\delta \F_1$. 
As a first step, let us note that for $\delta$ sufficiently small, the term $\H(\hat u|u) + \delta \F_1(\hat u|u)$ is equivalent to 
$\| u-\hat u\|_{L^2(0,\ell)}^2$.

\begin{lem} \label{lem:normequ_v}
	Suppose that the assumptions (A1)--(A3) hold and 
	\begin{align*}
		\delta \le \frac{c_0}{C_{Poin} \ell}
	\end{align*}
	with the constant $C_{Poin}$ from the Poincar\'{e} inequality. 
	Then
	\begin{align*}
		\tfrac{c_0}{2} \| \hat u(t, \cdot)- u(t, \cdot)\|_{L^2(0,\ell)}^2 \le \H( \hat u| u)(t) +\delta \F_1( \hat u| u)(t) \le \tfrac{3}{2} C_0\|\hat u(t, \cdot)- u(t, \cdot)\|_{L^2(0,\ell)}^2\qquad 
	\end{align*}
	for all $0\le t\le T$.
\end{lem}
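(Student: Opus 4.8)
The plan is to reduce the claimed two-sided bound on $\H(\hat u|u) + \delta \F_1(\hat u|u)$ to the already-established norm equivalence for $\H$ alone (Lemma \ref{lem:normequ}) by showing that $\delta|\F_1(\hat u|u)|$ can be absorbed into a fraction of $\|u-\hat u\|_{L^2}^2$. Concretely, Lemma \ref{lem:normequ} gives $c_0\|u-\hat u\|_{L^2}^2 \le \H(\hat u|u) \le C_0\|u-\hat u\|_{L^2}^2$, so it suffices to prove
\[
	|\F_1(\hat u|u)(t)| \le \tfrac{1}{\delta}\cdot\tfrac{c_0}{2}\,\|u(t,\cdot)-\hat u(t,\cdot)\|_{L^2(0,\ell)}^2 \quad\text{whenever } \delta \le \tfrac{c_0}{C_{Poin}\,\ell},
\]
since then $\H + \delta\F_1 \ge \H - \delta|\F_1| \ge (c_0 - \tfrac{c_0}{2})\|u-\hat u\|^2 = \tfrac{c_0}{2}\|u-\hat u\|^2$ and similarly $\H + \delta\F_1 \le C_0\|u-\hat u\|^2 + \tfrac{c_0}{2}\|u-\hat u\|^2 \le \tfrac{3}{2}C_0\|u-\hat u\|^2$ using $c_0 \le C_0$.

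The first substantive step is to identify $M - \hat M$ in a way that exposes a zero boundary value. From the continuity equations $\dt\rho + \dx m = 0$ and $\dt\hat\rho + \dx\hat m = 0$, and since $\rho(\cdot,0) = \rho_0$, $\hat\rho(\cdot,0) = \hat\rho_0$, integrating in time gives $\dx M(x,t) = \int_0^t \dx m(x,t')\,dt' - \rho_0(x) = -\int_0^t \dt\rho(x,t')\,dt' - \rho_0(x) = -\rho(x,t)$, and likewise $\dx\hat M(x,t) = -\hat\rho(x,t)$; hence $\dx(M-\hat M) = -(\rho - \hat\rho)$. Moreover, at $x=0$ we have $M(0,t) = \int_0^t m(0,t')\,dt' = \int_0^t m_b(t')\,dt' = \int_0^t \hat m(0,t')\,dt' = \hat M(0,t)$ by the boundary condition \eqref{eq:BC}, so $(M-\hat M)(0,t) = 0$. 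Therefore $M(x,t) - \hat M(x,t) = -\int_0^x (\rho(x',t) - \hat\rho(x',t))\,dx'$, a primitive of $-(\rho-\hat\rho)$ vanishing at the left endpoint.

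The second step is the estimate itself. Using Cauchy–Schwarz in the $x$-integral defining $\F_1$,
\[
	|\F_1(\hat u|u)(t)| \le \|M(\cdot,t) - \hat M(\cdot,t)\|_{L^2(0,\ell)}\,\|v(\cdot,t) - \hat v(\cdot,t)\|_{L^2(0,\ell)}.
\]
Since $M - \hat M$ vanishes at $x=0$ and has $L^2$-derivative $-(\rho-\hat\rho)$, the Poincaré inequality gives $\|M(\cdot,t)-\hat M(\cdot,t)\|_{L^2(0,\ell)} \le C_{Poin}\,\ell\,\|\rho(\cdot,t)-\hat\rho(\cdot,t)\|_{L^2(0,\ell)}$ (with the constant/scaling as the paper normalizes $C_{Poin}$). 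Combining with Young's inequality $ab \le \tfrac12(a^2+b^2)$,
\[
	|\F_1(\hat u|u)(t)| \le C_{Poin}\,\ell\,\|\rho-\hat\rho\|_{L^2}\,\|v-\hat v\|_{L^2} \le \tfrac{C_{Poin}\,\ell}{2}\big(\|\rho-\hat\rho\|_{L^2}^2 + \|v-\hat v\|_{L^2}^2\big) = \tfrac{C_{Poin}\,\ell}{2}\|u-\hat u\|_{L^2}^2.
\]
Thus $\delta|\F_1| \le \tfrac{\delta C_{Poin}\ell}{2}\|u-\hat u\|_{L^2}^2 \le \tfrac{c_0}{2}\|u-\hat u\|_{L^2}^2$ under the hypothesis $\delta \le c_0/(C_{Poin}\ell)$, and the two-sided bound follows as above.

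The main obstacle — really the only non-routine point — is the identification of $M-\hat M$ as a primitive of $-(\rho-\hat\rho)$ with a vanishing boundary term at $x=0$; this is where the structure of the definition of $M$ (time-integral of $m$ minus space-integral of $\rho_0$) and the mass-flow boundary condition \eqref{eq:BC} are used in an essential way, and it is what makes the Poincaré inequality applicable. One should also check that the regularity in (A1)/(A3), namely $(\rho,v),(\hat\rho,\hat v)\in C^0([0,T];W^{1,\infty})\cap W^{1,\infty}(0,T;L^\infty)$, justifies differentiating $M$ in $x$ and exchanging the order of the $\dx$ and $\int_0^t$ operations (Fubini / absolute continuity in $t$); this is routine given the stated function spaces.
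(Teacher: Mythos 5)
Your proposal is correct and follows essentially the same route as the paper's proof: you identify $\dx(M-\hat M)=-(\rho-\hat\rho)$ and $(M-\hat M)(0,t)=0$ from the mass-flow boundary condition, apply Cauchy--Schwarz, the Poincar\'{e} inequality and Young's inequality to get $\delta|\F_1|\le\tfrac{c_0}{2}\|u-\hat u\|_{L^2}^2$, and then absorb this into the two-sided bound from Lemma \ref{lem:normequ}. No gaps.
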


\begin{proof}
	Since 
	\begin{align*}
	M(0,t)=\int_0^t m(0, t') dt' = \int_0^t m_b(t') dt' = \hat M(0,t),
	\end{align*}
	we can apply the Poincar\'{e} inequality to $\Lnorm{M-\hat M}$. Therefore we can compute
	\begin{align*}
	|\F_1(\hat u|u)|&\le \|M-\hat M\|_{L^2} \|v-\hat v\|_{L^2}
	\le C_{Poin} \ell \|\dx M-\dx \hat M\|_{L^2} \|v-\hat v\|_{L^2}\\
	&\le \tfrac{1}{2} C_{Poin}\ell \Lnorm{\rho-\hat \rho} + \tfrac{1}{2} C_{Poin}\ell \Lnorm{v-\hat v},
	\end{align*}
	where we have used the fact that $\dx M(x) = \int_0^t \dx m(x, t') dt' - \rho_0(x) =- \rho(x)$ and Young's inequality in the last step.
	If $\delta \le \tfrac{c_0}{C_{Poin} \ell}$, we have
	\begin{align*}
	\delta |\F_1(\hat u|u)| \le \frac{c_0}{2} \Lnorm{u-\hat u}.
	\end{align*}
	Applying Lemma \ref{lem:normequ} yields the assertion.
\end{proof}

\begin{remark}
	Lemma \ref{lem:normequ_v} can be generalized to functions 
	$u_1=(\rho_1, v_1)$, $u_2=(\rho_2, v_2)\in L^2(0,\ell)$ with
	\begin{align*} 
	0 < \underline{\rho}\le \rho_1 (x),  \rho_2 (x) \le \bar \rho, \qquad 
	-\tilde v \le v_1(x), v_2(x) \le \tilde v,\qquad \forall x\in(0,\ell),
	\end{align*}
	and $M_1(\ell,t)=M_2(\ell,t)$ instead of $M_1(0,t)=M_2(0,t)$. In this case the definition of $M$ has to be modified, i.e., the integral $\int_0^x \rho_0(x') dx'$ has to be replaced by $\int_\ell^x \rho_0(x') dx'$ and boundary conditions for $m$ have to be prescribed at the right end of the pipe.
\end{remark}

Thus, we have shown that condition \eqref{eq:normeq-HG} is satisfied. It remains to
verify condition \eqref{eq:estimatecH}.

\begin{lem}\label{lem:decrease_v}
	Consider a solution $u=(\rho, v)$  of \eqref{eq:system_1}-\eqref{eq:system_2} and a solution $\hat u=(\hat \rho, \hat v)$ of \eqref{eq:observer_1}-\eqref{eq:observer_2} with observer terms given by \eqref{eq:observer-term_v} satisfying the boundary conditions \eqref{eq:BC}, the properties stated in (A1)-(A3) and the bounds in (A4) with $C_t, \bar v>0$ sufficiently small.
	
	Then there exist constants $\delta,  \, \bar c>0 $ with $\delta \le \tfrac{c_0}{C_{Poin} \ell}$  such that
	\begin{equation*} 
	\begin{split}
	-\langle \mathcal{L}_\rho, h-\hat h\rangle  -\langle \mathcal{L}_v, m-\hat m\rangle   + \delta \partial_t \F_1(\hat u|u)   
	\leq - \bar c \left(\H(\hat u|u)+ \delta \F_1(\hat u|u)\right).
	\end{split}
	\end{equation*}
\end{lem}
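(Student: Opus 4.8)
The plan is to compute $\partial_t \F_1(\hat u|u)$ explicitly, combine it with the estimate \eqref{eq:estimate_Rr_Rv_v} for the observer terms, and then choose $\delta$ small enough so that the resulting negative contribution in $\|\rho-\hat\rho\|_{L^2}^2$ dominates the positive $\tfrac12 \mu \frac{\bar v^2}{\ubar\rho}\|\rho-\hat\rho\|_{L^2}^2$ term while keeping control of the cross terms. First I would differentiate $\F_1$: writing $N := M-\hat M$ and $w := v-\hat v$, we have $\partial_t \F_1 = \int_0^\ell (\partial_t N)\, w\, dx + \int_0^\ell N\, (\partial_t w)\, dx$. Since $\partial_t M = m$ and $\partial_t \hat M = \hat m$, the first integral is $\int_0^\ell (m-\hat m)(v-\hat v)\,dx$; expanding $m-\hat m = \rho v - \hat\rho\hat v = \rho w + (\rho-\hat\rho)\hat v$ and using (A1)--(A3) this is bounded above by $-\ubar\rho \|w\|_{L^2}^2$ plus a term controlled by $\tfrac{\eta}{2}\|\rho-\hat\rho\|_{L^2}^2 + \tfrac{\bar v^2}{2\eta}\|w\|_{L^2}^2$ for any $\eta>0$ — actually I would keep it as an equality $\int (\rho w^2 + \hat v(\rho-\hat\rho)w)\,dx$ and Young it later. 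For the second integral, I substitute the momentum equations: $\partial_t w = \partial_t v - \partial_t \hat v = -\partial_x(h-\hat h) - \gamma(|v|v-|\hat v|\hat v) - \mathcal{L}_v$. Integrating the $-\partial_x(h-\hat h)$ term by parts gives $\int_0^\ell \partial_x N\,(h-\hat h)\,dx - [N(h-\hat h)]_0^\ell$; the boundary term at $x=0$ vanishes since $N(0,t)=M(0,t)-\hat M(0,t)=0$ (both equal $\int_0^t m_b$) and at $x=\ell$ it vanishes since $h(\ell)=\hat h(\ell)=h_b$. Crucially $\partial_x N = \partial_x M - \partial_x\hat M = -\rho-(-\hat\rho) = -(\rho-\hat\rho)$, so this contributes $-\int_0^\ell (\rho-\hat\rho)(h-\hat h)\,dx$.

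The key algebraic point is then to extract the coercive term: $-\int_0^\ell(\rho-\hat\rho)(h-\hat h)\,dx$, together with $h-\hat h = \tfrac12(v^2-\hat v^2) + P'(\rho)-P'(\hat\rho)$, produces $-\int_0^\ell (\rho-\hat\rho)(P'(\rho)-P'(\hat\rho))\,dx \le -\ubar C_{P''}\|\rho-\hat\rho\|_{L^2}^2$ by strong convexity of $P$, while the remaining piece $-\tfrac12\int(\rho-\hat\rho)(v^2-\hat v^2)\,dx$ is bounded by $C\bar v\,\|\rho-\hat\rho\|_{L^2}\|w\|_{L^2}$ and hence absorbable. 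The other contributions to the second integral — the friction difference $-\gamma\int N(|v|v-|\hat v|\hat v)\,dx$ and the observer term $-\mu\int N w\,dx$ — are each bounded by $C(\bar v,\mu)\|N\|_{L^2}(\|\rho-\hat\rho\|_{L^2}+\|w\|_{L^2})$, and using the Poincaré inequality $\|N\|_{L^2} \le C_{Poin}\ell\|\partial_x N\|_{L^2} = C_{Poin}\ell\|\rho-\hat\rho\|_{L^2}$ (valid since $N(0,t)=0$) these become $C\|\rho-\hat\rho\|_{L^2}(\|\rho-\hat\rho\|_{L^2}+\|w\|_{L^2})$; the pure $\|\rho-\hat\rho\|_{L^2}^2$ part is multiplied by $\delta$ in the final sum and can be made small, the mixed part is Young'd with a small coefficient on $\|w\|_{L^2}^2$. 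Collecting everything: $-\langle\mathcal{L}_v,m-\hat m\rangle + \delta\partial_t\F_1 \le -\tfrac12\mu\ubar\rho\|w\|_{L^2}^2 + \tfrac12\mu\tfrac{\bar v^2}{\ubar\rho}\|\rho-\hat\rho\|_{L^2}^2 + \delta\big(-\ubar\rho\|w\|_{L^2}^2 - \ubar C_{P''}\|\rho-\hat\rho\|_{L^2}^2 + (\text{Young cross terms}) + \delta^{-1}\cdot 0\big)$; choosing first $\bar v$ small (to make $\tfrac12\mu\tfrac{\bar v^2}{\ubar\rho}$ smaller than, say, $\tfrac12\delta\ubar C_{P''}$ once $\delta$ is fixed) and then $\delta$ small enough that $\delta \le \tfrac{c_0}{C_{Poin}\ell}$, the cross terms are dominated, and we obtain $\le -\tilde c_1\|w\|_{L^2}^2 - \tilde c_2\|\rho-\hat\rho\|_{L^2}^2 \le -\bar c\,\|u-\hat u\|_{L^2}^2$ for some $\bar c>0$; finally Lemma \ref{lem:normequ_v} converts $\|u-\hat u\|_{L^2}^2$ into $\H(\hat u|u)+\delta\F_1(\hat u|u)$ up to a constant, giving the claimed inequality with a (possibly smaller) $\bar c$.

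The main obstacle I anticipate is the bookkeeping in the order of choosing constants: $\delta$ must be small relative to $\ubar C_{P''}$, $c_0$, $C_{Poin}\ell$ and the Poincaré-induced constants, but the coercive $-\delta\ubar C_{P''}\|\rho-\hat\rho\|_{L^2}^2$ term must still beat the $\mu$-independent-of-$\delta$ bad term $\tfrac12\mu\tfrac{\bar v^2}{\ubar\rho}\|\rho-\hat\rho\|_{L^2}^2$ — this is precisely why the hypothesis requires $\bar v$ sufficiently small (and explains the paper's remark that increasing $\mu$ need not help, since $\mu$ multiplies both the good $\|w\|^2$ term and this bad $\|\rho-\hat\rho\|^2$ term). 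One must fix $\delta$ first using only the structural constants, then shrink $\bar v$ (and $C_t$, via Lemma \ref{lem:dtH_2}) afterward. A secondary care point is that all the "$\le$" manipulations involving $\partial_t v$, $\partial_t\hat v$ hold only a.e. in $t$ because of the $W^{1,\infty}$-in-time regularity in (A1),(A3), but that is exactly the setting in which Lemma \ref{lem:dtH_2} and Gronwall are applied, so no extra work is needed there.
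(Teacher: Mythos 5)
Your plan follows the paper's proof essentially step for step: differentiate $\F_1$, split into the four contributions (the $(m-\hat m)(v-\hat v)$ term, the integrated-by-parts enthalpy term using $\partial_x M=-\rho$ and the boundary conditions \eqref{eq:BC}, the observer term, and the friction term), extract coercivity in $\|\rho-\hat\rho\|_{L^2}^2$ from the strong convexity of $P$, control the remaining terms by Poincar\'e applied to $M-\hat M$, and fix $\delta$ first and then shrink $\bar v$. One local correction: $\int_0^\ell (m-\hat m)(v-\hat v)\,dx=\int_0^\ell \rho(v-\hat v)^2\,dx+\int_0^\ell\hat v(\rho-\hat\rho)(v-\hat v)\,dx$ is a \emph{positive} contribution of size up to roughly $2\bar\rho\|v-\hat v\|_{L^2}^2$, not something bounded above by $-\ubar\rho\|v-\hat v\|_{L^2}^2$; this bad term (together with the Poincar\'e-induced $\mu^2$ and $\gamma^2$ terms) is exactly what forces the smallness condition \eqref{eq:bound_delta} on $\delta$ relative to $\tfrac14\mu\ubar\rho$, and since your scheme already absorbs all $\delta$-multiplied $\|v-\hat v\|_{L^2}^2$ terms into $-\tfrac12\mu\ubar\rho\|v-\hat v\|_{L^2}^2$ by taking $\delta$ small, the argument goes through unchanged once the sign is fixed.
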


\begin{proof}
	We start by computing the time derivative of $\F_1$, which is given by
	\begin{align*}
		\dt \F_1(\hat u|u) 
		=&\, \int_0^\ell (\dt M-\dt \hat  M) (v-\hat v) dx +\int_0^\ell (M-\hat M) (\dt v-\dt \hat v) dx\\
		=&\, \int_0^\ell (m-\hat m)(v-\hat v) dx + \int_0^\ell (M-\hat M)(\dx \hat h-\dx h) dx\\
		& + \int_0^\ell (M-\hat M)(-\mathcal{L}_v) dx + \int_0^\ell (M-\hat M)(-\gamma |v| v+\gamma |\hat v | \hat v) dx\\
		= &\, (E_1) + (E_2) + (E_3) +(E_4),
	\end{align*}
	where the first term can be estimated by
	\begin{align*}
		(E_1) &= \int_0^\ell (m-\hat m)(v-\hat v) dx 
		= \int_0^\ell\left( \rho (v-\hat v)^2 + \hat v (\rho-\hat \rho ) (v-\hat v) \right)dx\\
		&\le 
		\tfrac{1}{2} \tfrac{\bar v^2}{\bar \rho} \Lnorm{\rho-\hat \rho} + \tfrac{3}{2} \bar \rho \Lnorm{v-\hat v}.
	\end{align*}
	Using integration by parts and the boundary conditions \eqref{eq:BC} we can estimate
		\begin{align*}
		(E_2)
		& = \int_0^\ell  (\dx M- \dx \hat M)( h-\hat h) dx 
		 = \int_0^\ell  (\hat \rho-\rho)\left(\tfrac{1}{2} (v^2-\hat v^2)+P'(\rho)-P'(\hat \rho)\right) dx\\
		&\le - \ubar C_{P''} \Lnorm{\rho-\hat{\rho}}
			+\int_0^\ell \frac{1}{2} (v+\hat v)(v-\hat v) (\hat\rho-\rho) dx\\
		&\le - \left( \ubar C_{P''} - \tfrac{\bar v^2  }{2 \ubar \rho}  \right) \Lnorm{\rho-\hat{\rho}}
			+ \tfrac{1}{2} \ubar \rho \Lnorm{v-\hat v}.
	\end{align*}
	Applying the Poincar\'{e} inequality to $(E_3)$ yields
	\begin{align*}
		(E_3)
		\le C_{Poin} \ell \|\rho-\hat\rho\|_{L^2} \,\mu  \|v-\hat v\|_{L^2}
		\le \tfrac{1}{4} \ubar C_{P''} \Lnorm{\rho-\hat{\rho}} 
		+ C_{Poin}^2 \ell^2 \mu^2 \tfrac{1}{\ubar C_{P''}} \Lnorm{v-\hat v}.
	\end{align*}
	Finally, the last term can be bounded by
	\begin{align*}
		(E_4)&=\int_0^\ell (M-\hat M)(-\gamma |v| v+\gamma |\hat v | \hat v) dx
		\le C_{Poin} \ell \|\rho-\hat\rho\|_{L^2} \, 2 \gamma \bar v \|v-\hat v\|_{L^2}\\
		&\le \tfrac{1}{4} \ubar C_{P''} \Lnorm{\rho-\hat{\rho}} 
			+ 4 \gamma^2 C_{Poin}^2 \ell^2 \frac{\bar v^2}{\Cp} \Lnorm{v-\hat v}.
	\end{align*}
	Assume that $\bar v^2\le \tfrac{1}{4}\ubar \rho \Cp$. Then summing the contributions from $(E_1)$--$(E_4)$ yields
	\begin{align*}
		\dt\F_1(\hat u|u) \le -\frac{1}{4} \ubar C_{P''} \Lnorm{\rho-\hat\rho}
			+(2 \bar \rho +C_{Poin}^2 \ell^2 \mu^2 \frac{1}{\Cp} +C_{Poin}^2 \ell^2 \gamma^2  \ubar \rho) \Lnorm{v-\hat v}.
	\end{align*}
	Combining this with \eqref{eq:estimate_Rr_Rv_v} leads to
	\begin{equation*} 
	\begin{split}
	    &-\langle \mathcal{L}_\rho, h-\hat h\rangle  -\langle \mathcal{L}_v, m-\hat m\rangle   + \delta \partial_t \F_1(\hat u|u) \\
		& \le \left(-\frac{1}{4} \delta  \ubar C_{P''}
		+ \tfrac{1}{2} \mu \frac{\bar v^2}{\ubar\rho}\right) \Lnorm{\rho-\hat\rho}\\
		&\qquad +\left(-\tfrac{1}{2} \mu \ubar \rho
		+ \delta \left( 2 \bar \rho +C_{Poin}^2 \ell^2 \mu^2 \frac{1}{\Cp} +C_{Poin}^2 \ell^2  \gamma^2 \ubar \rho \right)\right)	\Lnorm{v-\hat v}.
	\end{split}	
	\end{equation*}
	Now, we choose $\delta$ sufficiently small such that
	\begin{align} \label{eq:bound_delta}
		\delta \big( 2 \bar \rho +C_{Poin}^2 \ell^2 \mu^2 \frac{1}{\Cp} +C_{Poin}^2 \ell^2  \gamma^2 \ubar \rho \big)\le \tfrac{1}{4} \mu \ubar \rho
	\end{align}
	and the bound $\delta \le \tfrac{c_0}{C_{Poin} \ell}$ from Lemma \ref{lem:normequ_v} is satisfied.
	Then, for this $\delta$ we choose the velocity bound $\bar v$ sufficiently small such that
	\begin{align*}
		\tfrac{1}{2} \mu \frac{\bar v^2}{\ubar\rho} \le \frac{1}{8} \delta  \ubar C_{P''}.
	\end{align*}
	This yields
	\begin{align*}
		-\langle \mathcal{L}_\rho, h-\hat h\rangle  -\langle \mathcal{L}_v, m-\hat m\rangle   + \delta \partial_t \F_1(\hat u|u)
		\le -\frac{1}{8} \delta  \ubar C_{P''} \Lnorm{\rho-\hat\rho}
		-\tfrac{1}{4} \mu \ubar \rho	\Lnorm{v-\hat v},	
	\end{align*}
	which together with Lemma \ref{lem:normequ_v} implies the statement of the lemma.
\end{proof}

Applying Lemma ~\ref{lem:dtH_2} finishes the proof of Theorem \ref{thm:ExpSynchronization} for the case of velocity measurement.

\begin{remark}
	Increasing the parameter $\mu$ in the observer term does not necessarily improve the speed of convergence of the state of the observer system towards the original system state:
	In the proof of Lemma \eqref{lem:decrease_v} we see that $-\langle \mathcal{L}_\rho, h-\hat h\rangle  -\langle \mathcal{L}_v, m-\hat m\rangle   + \delta \partial_t \F_1(\hat u|u)$ has the decrease rate $\min\{\frac{1}{8} \delta  \ubar C_{P''} , \tfrac{1}{4} \mu \ubar \rho  \}$ and due to the bound \eqref{eq:bound_delta} on $\delta$  a large $\mu$ enforces a small $\delta$.
\end{remark}

\begin{remark}
	In the proof of Lemma \ref{lem:decrease_v} we see that we need at one end of the pipe boundary conditions for $m$ and at the other end boundary conditions for $h$, i.e., \eqref{eq:BC} or
	\begin{align*}
	m(t,\ell)=\hat m(t,\ell)=\tilde m_b(t), \qquad
	h(t,0)=\hat h(t,0)=\tilde h_b(t), \qquad 0\le t\le T.
	\end{align*}
	The reason for this is that
	we want to use the Poincar\'{e} inequality for $M$ in the estimation of $(E_3)$, so that we need $M=\hat M$ at at least one point in the interval $(0,\ell)$. Due to the definition of $M$ in Definition \ref{def:G}, this is satisfied at $x=0$, if we choose boundary conditions for $m$ at the left end of the pipe. Then, if we want to have vanishing boundary contributions in the partial integration in the estimation of $(E_2)$, we need $(M-\hat M)(\hat h -h)(x=\ell) =0$. Since in general
	\begin{align*}
		M(\ell,t)=\int_0^t m(\ell, t') dt' - \int_0^\ell \rho_0(x') dx'
		\ne \hat M(\ell,t),
	\end{align*}
	we need boundary conditions for $h$ at the other end of the pipe.
\end{remark}

\subsection{Idea of proof for measurement of $\rho$ or $m$} \label{sec:meas_rho_m}
In the case of  measurements of $\rho$ the nudging terms are given by
\begin{align*}
	\mathcal{L}_\rho=\mu\frac{c}{\sqrt{p'(\hat{\rho})}}\hat\rho (\tilde P(\rho)-\tilde P(\hat\rho)), \qquad \mathcal{L}_v=0, \qquad \mu>0.
\end{align*}
The proof of Theorem \ref{thm:ExpSynchronization}  for measurements of $\rho$ is similar to the proof for measurements of $v$.
Again, we start by estimating the last two terms of \eqref{eq:dtH}, i.e., 
\begin{align*} 
	&-\langle \mathcal{L}_\rho, h-\hat h\rangle -\langle \mathcal{L}_v, m-\hat m\rangle\notag\\
	&\le -\frac{1}{2}\mu \frac{\ubar{\rho}}{\bar \rho}\Cp  \frac{\sqrt{\ubar C_{p'}}}{\sqrt{\bar C_{p'}}} \Lnorm{\rho-\hat \rho} 
	+\frac{1}{2} \mu \left(\frac{\bar{\rho}}{\ubar{\rho}} \right)^3\frac{\bar v^2}{\Cp} \left( \frac{\bar C_{p'}}{\ubar C_{p'}}\right)^{3/2} \Lnorm{v-\hat v}, 
\end{align*}
where we have used the bounds in assumptions (A1)--(A3) as well as the assumption $|v|, |\hat v| \le \bar v$ for some constant $\bar v>0$.
In order to ensure decrease also with respect to $\|v-\hat v\|_{L^2}^2$, we define the functional
\begin{align*}
\F_2(\hat u|u):=\int_0^\ell (N-\hat N) (\rho- \hat \rho) dx
\end{align*}
with
\begin{align*}
	N(x,t):= & \int_0^t h(x, t') dt' -\int_\ell^x v_0(x') dx' 
	+ \int_\ell^x \int_0^t \gamma |v(x', t')| v(x', t') dt' dx'.
\end{align*}	
Using that 	$(N-\hat N)(\ell,t)=0$ and the Poincar\'{e} inequality, we can show that
\begin{align} \label{eq:equivalence_rho}
	\tfrac{c_0}{2} \| u(t,\cdot)-\hat u(t,\cdot)\|_{L^2(0,\ell)}^2 \le \H(\hat u| u)(t) +\delta \F_2(\hat u|u)(t) \le \tfrac{3}{2} C_0\| u(t,\cdot)-\hat u(t,\cdot)\|_{L^2(0,\ell)}^2
\end{align}
for all $0\le t\le T$ for $\delta$ sufficiently small.
Finally, it can be shown that, if $|v|, |\hat v| \le \bar v$ with $\bar v$ sufficiently small, then 
there exist constants $\delta,  \,\bar c>0 $ such that \eqref{eq:equivalence_rho} is satisfied and
\begin{align*}
	-\langle \mathcal{L}_\rho, h-\hat h\rangle -\langle \mathcal{L}_v, m-\hat m\rangle +\delta \dt \F_2(\hat u|u)
	\le -\bar c (\H(\hat u|u)+\delta \F_2(\hat u|u)),
\end{align*}
i.e., condition \eqref{eq:estimatecH} is satisfied for $\G:=\delta \F_2$. This finishes the proof for measurement of $v$.

For the case of measurement of the mass flow $m$ we use the nudging terms
\begin{align*}
\mathcal{L}_\rho=0, \qquad \mathcal{L}_v=\mu (m-\hat m)
\end{align*}
with $\mu>0$. This implies 
\begin{align*}
	-\langle \mathcal{L}_\rho, h-\hat h\rangle  -\langle \mathcal{L}_v, m-\hat m\rangle  
	\le -\tfrac{1}{2}\mu\ubar\rho^2 \Lnorm{v-\hat v} + 2\mu \bar v^2 \tfrac{\bar\rho^2}{\ubar \rho^2} \Lnorm{\rho -\hat\rho}.
\end{align*}
Again, the observer terms give decrease only with respect to one state variable.
We use the same functional as for the velocity measurement, i.e. the functional $\F_1$ defined in Definition \ref{def:G}, in order to achieve decrease also with respect to the other state variable.
Since the equivalence to the $L^2$-norm was already shown in Lemma \ref{lem:normequ_v},
it remains to show that the decrease condition \eqref{eq:estimatecH} is satisfied.
This can be shown similar to the proof of Lemma \ref{lem:decrease_v}.

\subsection{Extension to star-shaped networks} \label{sec:extensionNetworks}
Now we extend the convergence results that are shown in Theorem \ref{thm:ExpSynchronization} for a single pipe to star-shaped networks. 
Such networks have only one inner node, which we denote by $\nu_0$. We assume that the edges are oriented such that they start in the central node $\nu_0$ and end at a boundary node.
For the case of measurements of $v$ or $m$, we prescribe at every boundary node the boundary conditions
\begin{align} \label{eq:bc_net_mv}
	m^e(t,\nu)= \hat m^e(t,\nu)=m_b^e(t,\nu)\qquad \forall \nu\in \V_\partial,
\end{align}
while for density measurements we prescribe the boundary conditions
\begin{align} \label{eq:bc_net_rho_vp}
	h^e(t, \nu_\partial)=\hat h^e(t, \nu_\partial)=h_b^e(t, \nu_\partial)
\end{align}
for one arbitrary, but fixed boundary node $\nu_\partial\in\V_\partial$ and
\begin{align} \label{eq:bc_net_rho}
	m^e(t,\nu)=\hat m^e(t,\nu)=m_b^e(t,\nu)\qquad \forall \nu\in \V_\partial\setminus\{\nu_\partial\}.
\end{align}
Together with the energy conserving coupling conditions \eqref{eq:couplingc} 
this allows us to extend the results obtained above for a single pipe to star-shaped networks, up to some limitations detailed below.
The main idea in the proof is summing the estimates over all edges,
where we have to pay attention to the application of the Poincar\'{e} inequality in the proof of Lemma~\ref{lem:normequ_v} and of equation \eqref{eq:equivalence_rho}
as well as to the partial integration used in the verification of condition \eqref{eq:estimatecH}. For measurements of $v$ we present the proof in detail, while for measurements of $m$ or $\rho$ we just present the idea of the proof.

We start with the result for measurements of $v$ or $m$. In this case, we get exponential synchronization up to some error that is proportional to the difference between the total mass in the original system and the total mass in the observer system at the initial time $t=0$.

\begin{thm}\label{thm:ExpSynchronization_net_m-v}
	Let $u^e=(\rho^e, v^e)$ and $\hat u^e=(\hat\rho^e, \hat v^e)$, $e\in\E$, be Lipschitz continuous solutions of \eqref{eq:system_1}-\eqref{eq:system_2} and
    \eqref{eq:observer_1}-\eqref{eq:observer_2}, respectively, on the network $(\V, \E)$ with boundary conditions \eqref{eq:bc_net_mv}
    and source terms $\mathcal{L}_\rho$, $\mathcal{L}_v$ given by \eqref{eq:observer-term_v} or \eqref{eq:observer-term_m},
    that satisfy the coupling conditions~\eqref{eq:couplingc},
    the properties stated in the
    assumptions (A1)--(A3) and the bounds in (A4) with $C_t, \bar v>0$ sufficiently small on all pipes $e\in\E$.

	Then there exist constants $C_1, C_2, C_3>0$ such that
	\begin{align*}
    	\| u(t,\cdot)-\hat u(t,\cdot)\|_{L^2(\E)}^2 \le& \ C_1 
    	\| u_0-\hat u_0\|_{L^2(\E)}^2 \exp(-C_2t) + C_3 |\Delta M_0|
	\end{align*}
	for all $0\le t\le T$ with mass difference at time $t=0$
	\begin{align*}
	    \Delta M_0:=  \sum_{e\in\E} \int_0^{\ell^e} \rho_0^e(x) dx - \sum_{e\in\E} \int_0^{\ell^e} \hat \rho_0^e(x) dx  .
	\end{align*}
\end{thm}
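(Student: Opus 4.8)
The plan is to reduce the network statement to the single-pipe arguments of Section~\ref{sec:meas_v} (and, for mass-flow measurements, of Section~\ref{sec:meas_rho_m}) by summing all estimates over the edges $e\in\E$, while carefully tracking the node contributions at the central node $\nu_0$. First I would repeat the computation of Lemma~\ref{lem:dtH} on each edge. Summing $\dt\H(\hat u|u)=\sum_{e\in\E}\dt\H^e(\hat u^e|u^e)$, the only boundary terms are those produced by the $h$--$m$ integration by parts, namely $(h^e-\hat h^e)(\hat m^e-m^e)\big\vert_{x=0}^{\ell^e}$ on each edge. At a boundary node $x=\ell^e$ these vanish by \eqref{eq:bc_net_mv}, and at $\nu_0$ the sum $\sum_{e\in\E}(h^e-\hat h^e)(\hat m^e-m^e)(\nu_0)$ also vanishes: by continuity of the specific enthalpy the factor $h^e(\nu_0)-\hat h^e(\nu_0)$ is independent of $e$, while $\sum_{e\in\E}m^e(\nu_0)=\sum_{e\in\E}\hat m^e(\nu_0)=0$ by the mass-conservation part of \eqref{eq:couplingc} (all edges start in $\nu_0$, so $s^e(\nu_0)=-1$). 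Hence the network analogue of \eqref{eq:dtH} holds with $\|\cdot\|_{L^2(0,\ell)}$ replaced by $\|\cdot\|_{L^2(\E)}$ and the observer terms replaced by $\sum_{e\in\E}$; summing \eqref{eq:estimate_Rr_Rv_v} (for $v$) or the corresponding bound from Section~\ref{sec:meas_rho_m} (for $m$) over the edges is immediate.

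For the functional $\F_1$ the essential modification is to anchor $M^e-\hat M^e$ where it vanishes. Since the boundary conditions for the mass flow are imposed at the boundary nodes $x=\ell^e$, I would set on each edge
\begin{align*}
M^e(x,t):=\int_0^t m^e(x,t')\,dt'-\int_{\ell^e}^x\rho_0^e(x')\,dx',\qquad
\F_1^e(\hat u|u)(t):=\int_0^{\ell^e}(M^e-\hat M^e)(v^e-\hat v^e)\,dx,
\end{align*}
so that $\dx M^e=-\rho^e$, $\dt M^e=m^e$ and $(M^e-\hat M^e)(\ell^e,t)=0$. The Poincar\'e inequality then applies edge by edge exactly as in Lemma~\ref{lem:normequ_v}, and summing yields the equivalence \eqref{eq:normeq-HG} with $\G:=\delta\sum_{e\in\E}\F_1^e$ for $\delta$ sufficiently small.

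It remains to verify \eqref{eq:estimatecH}. Computing $\dt\F_1^e$ on each edge, the terms $(E_1^e)$, $(E_3^e)$, $(E_4^e)$ are bounded as in the proof of Lemma~\ref{lem:decrease_v}, with constants uniform in $e$ by (A1)--(A4). The only new contribution is in $(E_2^e)=\int_0^{\ell^e}(M^e-\hat M^e)(\dx\hat h^e-\dx h^e)\,dx$: integrating by parts gives $(M^e-\hat M^e)(\hat h^e-h^e)\big\vert_{x=0}^{\ell^e}$ plus the volume term $\int_0^{\ell^e}(\hat\rho^e-\rho^e)(\hat h^e-h^e)\,dx$. The part at $x=\ell^e$ vanishes since $M^e-\hat M^e=0$ there, and the summed part at $\nu_0$ equals $-\big(\hat h(\nu_0,t)-h(\nu_0,t)\big)\sum_{e\in\E}\big(M^e(0,t)-\hat M^e(0,t)\big)$, again by continuity of $h$ and of $\hat h$. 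Here
\begin{align*}
\sum_{e\in\E}\big(M^e(0,t)-\hat M^e(0,t)\big)
=\int_0^t\sum_{e\in\E}\big(m^e(0,t')-\hat m^e(0,t')\big)\,dt'+\sum_{e\in\E}\int_0^{\ell^e}\big(\rho_0^e-\hat\rho_0^e\big)\,dx'
=\Delta M_0,
\end{align*}
the time integral vanishing by mass conservation at $\nu_0$; in particular this quantity is independent of $t$. Thus $\sum_{e\in\E}\dt\F_1^e$ coincides with the single-pipe estimate up to the term $\big(h(\nu_0,t)-\hat h(\nu_0,t)\big)\Delta M_0$, which by (A1), (A3) is bounded in absolute value by $C\,|\Delta M_0|$ for a constant $C$ depending only on the a priori bounds. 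Choosing $\delta$, and then $\bar v$ and $C_t$, small as in Lemma~\ref{lem:decrease_v}, and absorbing the cross terms, friction and Hessian contributions via Lemma~\ref{lem:normequ}, I obtain
\begin{align*}
\dt\Big(\H(\hat u|u)+\delta\sum_{e\in\E}\F_1^e(\hat u|u)\Big)
\le -\bar c\,\Big(\H(\hat u|u)+\delta\sum_{e\in\E}\F_1^e(\hat u|u)\Big)+\delta\,C\,|\Delta M_0|,
\end{align*}
and Gronwall's inequality together with \eqref{eq:normeq-HG} gives the claimed bound with suitable $C_1,C_2,C_3>0$. The main obstacle is exactly this residual node term: on a network $M^e-\hat M^e$ cannot be made to vanish at both ends of every edge simultaneously, and identifying its summed value at $\nu_0$ as $\Delta M_0$ — which uses \emph{both} coupling conditions in \eqref{eq:couplingc} — is what both closes the argument and forces the irreducible error $C_3|\Delta M_0|$.
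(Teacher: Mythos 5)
Your proposal follows essentially the same route as the paper's proof: the same edge-anchored redefinition $M^e(x,t)=\int_0^t m^e(x,t')\,dt'-\int_{\ell^e}^x\rho_0^e(x')\,dx'$ permitting an edgewise Poincar\'e inequality, the same identification (via both coupling conditions in \eqref{eq:couplingc}) of the residual boundary term at $\nu_0$ in $(E_2)$ as $\big(h(\nu_0)-\hat h(\nu_0)\big)\Delta M_0$, and the same Gronwall argument with the additive $C_3|\Delta M_0|$ remainder. The only blemish is a sign slip in your stated volume term of $(E_2^e)$ (it should be $\int_0^{\ell^e}(\hat\rho^e-\rho^e)(h^e-\hat h^e)\,dx$ so that the $P'$ difference yields the negative-definite contribution), which is immaterial to the structure of the argument.
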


\begin{remark}
	In contrast to the result on a single pipe, in Theorem \ref{thm:ExpSynchronization_net_m-v} there is the additional term $C_3 |\Delta M_0|$ depending on the initial total mass difference.
	Due to the choice of the boundary conditions for $m$ at each boundary node and the choice  $\mathcal{L}_\rho=0$ for measurement of $v$ or $m$, there is no chance that this contribution vanishes: For the mass difference at time $t$
	\begin{align*}
		\Delta M(t):=  \sum_{e\in\E} \int_0^{\ell^e} \rho^e(x,t) dx - \sum_{e\in\E} \int_0^{\ell^e} \hat \rho^e(x,t) dx 
	\end{align*}
	we have
	\begin{align*}
		\dt (\Delta M(t))&=  \sum_{e\in\E} \int_0^{\ell^e} (-\dx m^e(x,t)+\dx \hat m^e(x,t)) dx
		= 	\sum_{e\in\E}( m^e(\nu_0) -  \hat m^e(\nu_0) )= 0,
	\end{align*}
	where we have used the boundary conditions \eqref{eq:bc_net_mv}
	and the coupling conditions \eqref{eq:couplingc}. 
	This implies that $\Delta M(t) = \Delta M_0$ for all $t\in[0,T]$, i.e. the total mass difference at time $t$ is the same as the total mass difference at the initial time $t=0$.
\end{remark}

\begin{proof}[Proof of Theorem \ref{thm:ExpSynchronization_net_m-v}]
	By summing over all edges $e\in\E$, the proof in Section 
	\ref{sec:meas_v} for measurement of $v$ transfers almost verbatim to star-shaped networks except for the application of the Poincar\'{e} inequality in the proof of Lemma~\ref{lem:normequ_v} and the partial integration used in the proof of Lemma~\ref{lem:decrease_v}, which we will detail in the following.
    In order to transfer Lemma~\ref{lem:normequ_v} to networks, we define analogously to Definition~\ref{def:G}
    \begin{align*}
		\F_1(\hat u|u)(t):=\sum_{e\in\E}\int_0^{\ell^e} (M^e(x,t)-\hat M^e(x,t)) (v^e(x,t)-\hat v^e(x,t)) dx
	\end{align*}
	with
	\begin{align*}
		M^e(x,t):=\int_0^t m^e(x, t') dt' - \int_{\ell^e}^x \rho^e_0(x') dx', \quad e\in\E.
	\end{align*}
	Due to the boundary conditions \eqref{eq:bc_net_mv} we have
	\begin{align*}
	    M^e(\nu) = \int_0^t m^e(\nu,t') dt' =  \hat M^e(\nu) \qquad \forall \nu\in\V_\partial, e\in\E(\nu).
	\end{align*}
	Since the network is star-shaped, every edge $e\in\E$ is incident to some boundary node. Therefore, we can apply the Poincar\'{e} inequality to each edge separately and show similarly to Lemma~\ref{lem:normequ_v} that
	\begin{align*}
		&\tfrac{c_0}{2} \| \hat u(t, \cdot)- u(t, \cdot)\|_{L^2(\E)}^2 \le \sum_{e\in\E}\H( \hat u^e| u^e)(t) +\delta \F_1( \hat u| u)(t) 
		\le \tfrac{3}{2} C_0\|\hat u(t, \cdot)- u(t, \cdot)\|_{L^2(\E)}^2\qquad 
	\end{align*}
	for all $0\le t\le T$, if
	$\delta \le \frac{c_0}{ C_{Poin} \ell_{\max}}$
	with $\ell_{\max}:=\max_{e\in\E}\ell^e$.

	Next, we consider the partial integration that is used in the estimation of term $(E_2)$ in the proof of Lemma~\ref{lem:decrease_v}. For star-shaped networks, we get
	\begin{align*}
		(E_2)& = \sum_{e\in\E}\int_0^{\ell^e} (M^e-\hat M^e)(\dx \hat h^e-\dx h^e) dx\\
		&= \sum_{e\in\E} \int_0^{\ell^e}  (\dx M^e- \dx \hat M^e)( h^e-\hat h^e) dx
		    + \sum_{e\in\E} (M^e-\hat M^e) (\hat h^e- h^e)\vert_0^{\ell^e}\\
		&=  \sum_{e\in\E} \int_0^{\ell^e}  (\dx M^e- \dx \hat M^e)( h^e-\hat h^e) dx
		   + \sum_{\nu\in\V_\partial} \sum_{e\in\E(\nu)} (M^e(\nu)-\hat M^e(\nu)) (\hat h^e(\nu)- h^e(\nu)) \\
		&\quad     -(\hat h(\nu_0)- h(\nu_0)) \sum_{e\in\E(\nu_0)}  (M^e(\nu_0)-\hat M^e(\nu_0)),
	\end{align*}
	where we have used the coupling conditions \eqref{eq:couplingc} in the last step.
	The sum over the boundary nodes vanishes due to the boundary condition \eqref{eq:bc_net_mv}
	and for the last term we can compute
	\begin{align*}
		&\sum_{e\in\E(\nu_0)}  (M^e(\nu_0)-\hat M^e(\nu_0))\\
		&= \int_0^t \left( \sum_{e\in\E(\nu_0)} m^e(\nu_0,t') - \sum_{e\in\E(\nu_0)} \hat m^e(\nu_0,t')  \right) dt'
		+ \sum_{e\in\E} \int_0^{\ell^e} \rho_0^e(x') dx'  - \sum_{e\in\E} \int_0^{\ell^e} \hat\rho_0^e(x') dx',
	\end{align*}
	where the first term vanishes due to the coupling conditions \eqref{eq:couplingc}.
	Using the estimation of $(E_2)$ in the proof of Lemma \ref{lem:decrease_v} and the fact that by Assumption (A2) the term $\hat h(\nu_0)- h(\nu_0)$
	is bounded by a constant $C_h$ depending only on the bounds in the assumptions,
	this implies
	\begin{align*}
		(E_2)\le - \left( \ubar C_{P''} - \tfrac{\bar v^2  }{2 \ubar \rho}  \right) \|\rho-\hat{\rho}\|_{L^2(\E)}^2
		+ \tfrac{1}{2} \ubar \rho \|v-\hat v\|_{L^2(\E)}^2 +C_h |\Delta M_0|.
	\end{align*}
	For measurements of $v$, we can proceed as in the proof of Lemma \ref{lem:decrease_v}. This yields
    \begin{align*}
		&-\langle \mathcal{L}_\rho, h-\hat h\rangle  -\langle \mathcal{L}_v, m-\hat m\rangle   + \delta \partial_t \F_1(\hat u|u)\\
		&\le -\frac{1}{8} \delta  \ubar C_{P''} \|\rho-\hat\rho\|_{L^2(\E)}^2 
		-\tfrac{1}{4} \mu \ubar \rho	\|v-\hat v\|_{L^2(\E)}^2  +\delta C_h |\Delta M_0|.	
	\end{align*}
	As in the proof of Lemma \ref{lem:dtH_2}, we can then apply a Gronwall Lemma, which yields the assertion.	The statement for measurement of $m$ follows similarly.
\end{proof}

Now, we want to extend the exponential synchronization for measurement of $\rho$ to star-shaped networks. 
Here, we do not get an additional term depending on the initial mass difference, but we will use a slightly adapted definition of the functional $\F_2$, which makes use of the continuity of the specific enthalpy $h$ included into the coupling conditions.

\begin{thm}\label{thm:ExpSynchronization_net_rho}
	Let $u^e=(\rho^e, v^e)$ and $\hat u^e=(\hat\rho^e, \hat v^e)$, $e\in\E$, be Lipschitz continuous solutions of \eqref{eq:system_1}-\eqref{eq:system_2} and
    \eqref{eq:observer_1}-\eqref{eq:observer_2}, respectively, on the network $(\V, \E)$ with boundary conditions \eqref{eq:bc_net_rho_vp}-\eqref{eq:bc_net_rho}
    and source terms $\mathcal{L}_\rho$, $\mathcal{L}_v$ given by \eqref{eq:observer-term_rho},
   	that satisfy the coupling conditions~\eqref{eq:couplingc},
   	the properties stated in the
   	assumptions (A1)--(A3) and the bounds in (A4) with $C_t, \bar v>0$ sufficiently small on all pipes $e\in\E$.

	Then there exist constants $C_1, C_2 >0$ such that
	\begin{align} \label{eq:decrease_rho_network}
    	\| u^e(t,\cdot)-\hat u^e(t,\cdot)\|_{L^2(\E)}^2 \le& \ C_1 
    	\| u^e_0-\hat u^e_0\|_{L^2(\E)}^2 \exp(-C_2t) 
	\end{align}
	for all $0\le t\le T$.
\end{thm}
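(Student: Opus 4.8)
The plan is to transfer the single-pipe argument of Section~\ref{sec:meas_rho_m} to the star-shaped network by working with $\sum_{e\in\E}\H(\hat u^e|u^e) + \delta\,\F_2(\hat u|u)$ for a suitably adapted functional $\F_2$ and then verifying the two hypotheses \eqref{eq:normeq-HG} and \eqref{eq:estimatecH} of Lemma~\ref{lem:dtH_2} with all $L^2$-norms replaced by $\|\cdot\|_{L^2(\E)}$; once these hold, the Gronwall argument in the proof of Lemma~\ref{lem:dtH_2} applies verbatim and yields \eqref{eq:decrease_rho_network}. First I would fix the distinguished edge $e_0\in\E(\nu_\partial)$ carrying the enthalpy condition \eqref{eq:bc_net_rho_vp} and, on $e_0$, define $N^{e_0}$ exactly as in Section~\ref{sec:meas_rho_m} but with reference point at $\nu_\partial$, so that $(N^{e_0}-\hat N^{e_0})(\nu_\partial,t)=0$. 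On every other edge $e$ I would define $N^e$ (and $\hat N^e$) by requiring $\dx N^e=-v^e$ together with the matching condition $N^e(\nu_0,t)=N^{e_0}(\nu_0,t)$, so that $w^e:=N^e-\hat N^e$ is continuous at the central node $\nu_0$ and $w^{e_0}$ vanishes at $\nu_\partial$. Here the continuity of the specific enthalpy in the coupling conditions \eqref{eq:couplingc} is precisely what makes $\dt N^e$ at $\nu_0$ consistent across the incident edges.

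With this choice, the equivalence \eqref{eq:normeq-HG} follows from a Poincar\'e inequality on the graph: a piecewise-$H^1$ function that is continuous at $\nu_0$ and vanishes at $\nu_\partial$ satisfies $\|w\|_{L^2(\E)}^2\le C\|\dx w\|_{L^2(\E)}^2$ (integrate along the path from $\nu_\partial$ through $\nu_0$ to any point), and since $\dx w^e=-(v^e-\hat v^e)$ this gives $|\F_2(\hat u|u)|\le C\,\|u-\hat u\|_{L^2(\E)}^2$; choosing $\delta$ small, Lemma~\ref{lem:normequ} applied edge by edge then yields \eqref{eq:normeq-HG}. For \eqref{eq:estimatecH} I would differentiate $\F_2=\sum_{e\in\E}\int_0^{\ell^e}(N^e-\hat N^e)(\rho^e-\hat\rho^e)\,dx$ in time, integrate by parts in $x$ on each edge, and collect the boundary contributions into a sum over the nodes. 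The term at each node of $\V_\partial\setminus\{\nu_\partial\}$ vanishes because $m^e=\hat m^e$ by \eqref{eq:bc_net_rho}; the term at $\nu_\partial$ vanishes because $w^{e_0}(\nu_\partial)=0$; and at $\nu_0$ the common value $w^{e_0}(\nu_0,t)$ factors out and is multiplied by $\sum_{e\in\E(\nu_0)}s^e(\nu_0)(m^e-\hat m^e)(\nu_0)=0$, by conservation of mass in \eqref{eq:couplingc}. Hence, unlike in Theorem~\ref{thm:ExpSynchronization_net_m-v}, no error term survives; the remaining volume terms are estimated exactly as in the single-pipe proof sketched in Section~\ref{sec:meas_rho_m} (with the contribution of Lemma~\ref{lem:dtH} summed over edges), so that for $\delta$ and $\bar v$ sufficiently small \eqref{eq:estimatecH} holds with $\G:=\delta\F_2$, and Lemma~\ref{lem:dtH_2} concludes.

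The main obstacle is getting the adapted functional right: $N^e$ must be chosen so that \emph{simultaneously} (i) $\dx N^e=-v^e$ (needed both for the graph Poincar\'e step and for the cancellation producing $-\int\rho^e(v^e-\hat v^e)^2\,dx$), (ii) $w^e=N^e-\hat N^e$ is globally continuous on $(\V,\E)$ with a zero at $\nu_\partial$, and (iii) the node contributions arising from the spatial integration by parts telescope away, which uses \emph{both} coupling conditions in \eqref{eq:couplingc}. As on a single pipe, the friction term built into $N$ forces $\dt N^e$ to differ from $h^e$ by a bounded spatial friction integral; this extra term, together with the $-\langle N-\hat N,\mathcal{L}_\rho\rangle$ contribution coming from $\mathcal{L}_\rho\neq 0$, must be absorbed using the graph Poincar\'e estimate and the smallness of $\delta$ and $\bar v$ — the only quantitative point needing care beyond the single-pipe computation. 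Once these choices and cancellations are in place, the remainder is a routine edge-by-edge repetition of Section~\ref{sec:meas_rho_m}.
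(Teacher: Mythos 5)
Your proposal is correct and follows essentially the same route as the paper: the adapted functional $\F_2$ built from an antiderivative $N^e$ with $\dx N^e=-v^e$, continuous at $\nu_0$ and vanishing at $\nu_\partial$ (your ODE-plus-matching construction is equivalent to the paper's path integral from $\nu_\partial$), a graph Poincar\'e inequality for the norm equivalence, and cancellation of all node terms via \eqref{eq:bc_net_rho_vp}--\eqref{eq:bc_net_rho} and both conditions in \eqref{eq:couplingc}, followed by Lemma~\ref{lem:dtH_2}. You correctly identify the two places where enthalpy continuity enters (consistency of $\dt N^e$ at $\nu_0$ and the vanishing of the central-node boundary term against $\sum_e s^e(m^e-\hat m^e)(\nu_0)=0$), which is exactly the mechanism the paper relies on.
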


\begin{proof}[Idea of proof]
	For measurements of $\rho$, we define similar to the definition in Section \ref{sec:meas_rho_m}
	\begin{align*}
	\F_2(\hat u|u):=\sum_{e\in\E}\int_0^{\ell^e} (N^e-\hat N^e) (\rho^e- \hat \rho^e) dx
	\end{align*}
	with
	\begin{align*}
	N^e(x,t):= & \int_0^t h^e(x, t') dt' -\int_{\vp}^x v_0^e(x') dx' 
	+ \int_{\vp}^x \int_0^t \gamma |v^e(x', t')| v^e(x', t') dt' dx',
	\end{align*}
	where we integrate along the unique path from $\vp$ to $x\in e$.
	It can be shown that 
	\begin{align*}
		\tfrac{c_0}{2} \| u(t,\cdot)-\hat u(t,\cdot)\|_{L^2(\E)}^2 \le \sum_{e\in\E} \H(\hat u^e| u^e)(t) +\delta \F_2(\hat u|u)(t) \le \tfrac{3}{2} C_0\| u(t,\cdot)-\hat u(t,\cdot)\|_{L^2(\E)}^2
	\end{align*}
	for all  $0\le t\le T$ for $\delta$ sufficiently small.
	Finally, using the coupling conditions \eqref{eq:couplingc} we can show that all boundary terms at inner nodes of the network vanish and \eqref{eq:estimatecH} is satisfied for $\G=\delta \F_2$ for suitable constants $\delta,\, \bar v,\, \bar c>0$.
\end{proof}

\begin{remark}
	For measurements of $v$ or $m$ we use the boundary conditions \eqref{eq:bc_net_mv}, i.e., we prescribe the value of the mass flow at every boundary node.
	Due to this choice of the boundary conditions it is possible to apply the Poincar\'{e} inequality in order to show equivalence of $\H+\delta \F_1$ to the $L^2$-norm (see the proof of Theorem \ref{thm:ExpSynchronization_net_m-v}),
	but this boundary conditions also lead  to the mass difference $\Delta M_0$ in Theorem \eqref{thm:ExpSynchronization_net_m-v}.
	In contrast to this, for measurements of the density we use the boundary conditions \eqref{eq:bc_net_rho_vp}-\eqref{eq:bc_net_rho}, i.e. at one fixed boundary node we use boundary conditions for $h$, while at all other boundary nodes we prescribe the value of $m$.
	This can be done since the continuity of $h$ is included in the coupling conditions such that the Poincar\'{e} inequality, which is needed in order to show the norm equivalence, can be applied.
	In combination with the coupling conditions this choice of boundary conditions has the advantage that we do not have a term of the form $\Delta M_0$ at the right hand side of \eqref{eq:decrease_rho_network}.
\end{remark}

\section*{Acknowledgement}
The authors are grateful for financial support by the German Science Foundation (DFG) via grant TRR~154 (project number 239904186), \emph{Mathematical modelling, simulation and optimization using the example of gas networks}, project~C05.

\end{document}